\newcommand{\citeSta}[1]{\cite[Tag \href{https://stacks.math.columbia.edu/tag/#1}{#1}]{Stacks}}
\newtheorem{theorem}{Theorem}[section]
\newtheorem{lemma}[theorem]{Lemma}
\newtheorem{proposition}[theorem]{Proposition}
\newtheorem{corollary}[theorem]{Corollary}
\newtheorem{maintheorem}{Main Theorem}
\newtheorem{conjecture}[theorem]{Conjecture}
\theoremstyle{definition}
\newtheorem{definition}[theorem]{Definition}
\newtheorem{remark}[theorem]{Remark}
\theoremstyle{remark}
\newtheorem{discussion}[theorem]{Discussion}
\newtheorem{example}[theorem]{Example}
\newtheorem{acknowledgement}{Acknowledgement}
\newcommand{\Ker}{\operatorname{Ker}}
\newcommand{\Spec}{\operatorname{Spec}}
\newcommand{\V}{\operatorname{V}}
\newcommand{\id}{\operatorname{id}}
\newcommand{\Ext}{\operatorname{Ext}}
\newcommand{\Frac}{\operatorname{Frac}}
\newcommand{\tor}{\operatorname{tor}}
\newcommand{\Proj}{\operatorname{Proj}}
\newcommand{\Frob}{\operatorname{Frob}}
\newcommand{\colim}{\operatorname{colim}}
\newcommand\omicron{o}
\newcommand{\prarrow}[2]{\ar@<0.5ex>[r]^-{#1}\ar@<-0.5ex>[r]_-{#2}} \newcommand{\plarrow}[2]{\ar@<0.5ex>[l]^-{#1}\ar@<-0.5ex>[l]_-{#2}} \newcommand{\pdarrow}[2]{\ar@<0.5ex>[d]^-{#1}\ar@<-0.5ex>[d]_-{#2}} \newcommand{\puarrow}[2]{\ar@<0.5ex>[u]^-{#1}\ar@<-0.5ex>[u]_-{#2}}
\newcommand{\vpl}{\operatornamewithlimits{\varprojlim}}
\newcommand{\fm}{\mathfrak{m}}
\newcommand{\perfd}{\mathrm{perfd}}
\begin{document}
\title[$\delta$-rings, perfectoid towers, and lim Cohen--Macaulay sequences]
{$\delta$-rings, perfectoid towers, and lim Cohen--Macaulay sequences}

\author[S. Ishiro]{Shinnosuke Ishiro}
\address{National Institute of Technology, Gunma College, 580 Toriba-machi, Maebashi-shi, Gunma 371-8530, Japan}
\email{shinnosukeishiro@gmail.com}

\author[K. Shimomoto]{Kazuma Shimomoto}
\address{Department of Mathematics, Institute of Science Tokyo, 2-12-1 Ookayama, Meguro, Tokyo 152-8551, Japan}
\email{shimomotokazuma@gmail.com}

\thanks{2020 {\em Mathematics Subject Classification\/}: 13A35, 13D22, 13F55, 13F65, 14G45}

\keywords{$\delta$-ring, Frobenius lift, lim Cohen--Macaulay sequence, perfectoid tower}

%\subjclass{13}
%\subjclass[2000]{Primary 13-XX}
%\subjclass[2000]{Primary ; Secondary}
%\date{\today \, (\printtime)}
%\date{\today}

\begin{abstract}
The aim of this article is to study basic structures and interrelations of $\delta$-rings, perfectoid towers, and lim Cohen--Macaulay sequences over Noetherian rings in positive or mixed characteristic. We also discuss the deformation of perfectoid purity via perfectoid towers. In the latter part of this paper, we discuss some methods for constructing perfectoid towers, dealing with $p$-torsion-free and $p$-torsion cases, respectively. Some interesting examples arise as quotients by monomial or binomial ideals or determinantal rings. We also explain a geometric method with a view toward constructing rings with certain singularities.
\end{abstract}

\maketitle

\setcounter{tocdepth}{3}
\tableofcontents

\section{Introduction}

The aim of this paper is to provide some methods for the construction of perfectoid towers and study their relations with $\delta$-rings and lim Cohen--Macaulay sequences. 
The notion of perfectoid towers was introduced in \cite{INS23} as a Noetherian approximation of perfectoid rings with applications to singularities of algebraic varieties. 
The existence of such a tower over a Noetherian ring $R$ enables us to deduce some interesting arithmetic information via tilting correspondence. The most striking application is the study of purity for the Brauer group and flat cohomology as done in \cite{CK19} and \cite{KS20}. The notion of lim Cohen--Macaulay sequences was introduced and studied by Bhatt--Hochster--Ma in \cite{BHM24} with potential applications to the positivity conjecture of the intersection multiplicities of Serre.
The notion of lim Cohen--Macaulay sequences (see \Cref{LimCMDef}) is a substitute for small (or maximal) Cohen--Macaulay modules.
In fact, we are motivated by the following conjecture (see \cite[Theorem 6.1]{Ho17} for a precise statement):

\begin{conjecture}[Bhatt--Hochster--Ma]
Every Noetherian complete local domain whose residue field is algebraically closed has a lim Cohen--Macaulay sequence. If this conjecture is true, then Serre's positivity conjecture on multiplicities holds in general.
\end{conjecture}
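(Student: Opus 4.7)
The statement consists of two parts. The implication from lim Cohen-Macaulay sequences to Serre's positivity is, broadly speaking, an adaptation of the classical argument that a small Cohen-Macaulay module over a regular local ring yields nonnegativity of intersection multiplicities. I would first record this reduction carefully, replacing the (conjectural) small CM module by a lim CM sequence $\{M_i\}$ and reinterpreting $\chi(M,N) = \sum_i (-1)^i \ell(\Tor_i^R(M,N))$ as a suitable limit of Euler characteristics computed against the $M_i$'s. Since the paper is about constructing lim CM sequences, I will concentrate on the existence half.

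For existence, the plan is to exploit the framework developed in the body of this paper: a perfectoid tower $R = R_0 \to R_1 \to R_2 \to \cdots$ built using the $\delta$-ring structure, whose $p$-adically completed colimit is perfectoid. By Cohen's structure theorem, one reduces to the case $R = V[[x_1, \dots, x_n]]/I$ with $V$ either a field or a complete discrete valuation ring with algebraically closed residue field. For a fixed system of parameters $\underline{y}$ of $R$, the key quantitative estimate one must establish is
\[
\lim_{i \to \infty} \frac{\ell_R\bigl(H_j(\underline{y}; R_i)\bigr)}{\ell_R\bigl(R_i/\underline{y}R_i\bigr)} = 0 \quad \text{for all } j \geq 1,
\]
which is exactly the lim Cohen-Macaulay condition with respect to $\underline{y}$. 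The $\delta$-ring structure is what allows the tower to be constructed functorially, and in particular to be defined uniformly across the equal and mixed characteristic cases.

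The main obstacle, and the reason the conjecture remains open, is obtaining this asymptotic vanishing. In the perfectoid colimit $R_\infty$, almost mathematics gives vanishing of $H_j(\underline{y}; R_\infty)$ up to torsion annihilated by a fixed element of small valuation; the hard step is descending this almost vanishing to an honest bound on the length of Koszul homology at each Noetherian layer $R_i$, with an error that is $o(\ell_R(R_i/\underline{y}R_i))$ as $i \to \infty$. In equal characteristic $p > 0$ this can be approached via Frobenius splittings and tight closure, but in mixed characteristic one must control the $p$-torsion introduced by the tower uniformly in $i$, which seems to require a quantitative refinement of Bhatt--Scholze perfectoidization or of Andr\'e's flatness lemma that is not currently available. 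This quantitative gap is precisely what separates the lim CM conjecture from the already-resolved big Cohen-Macaulay and direct summand conjectures, and I expect any serious attempt to close it to combine the $\delta$-ring/perfectoid tower methods emphasized here with a new source of uniform boundedness, such as controlled trace maps along the tower or a Noetherian descent of almost vanishing.
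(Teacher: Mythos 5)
There is no proof to compare against: the statement you were given is the Bhatt--Hochster--Ma \emph{conjecture}, which the paper records purely as motivation and does not prove (the second sentence, that lim Cohen--Macaulay sequences imply Serre's positivity conjecture, is a theorem of Hochster, cited as \cite[Theorem 6.1]{Ho17}, and is itself a substantial limiting argument rather than a routine adaptation of the small CM module argument, as your first paragraph suggests). Your proposal correctly recognizes that the existence half is open and offers an obstruction analysis rather than a proof, so as a ``proof'' it has an unavoidable gap: the asymptotic vanishing $\ell_R(H_j(\underline{y};R_i)) = \omicron(\dim_k(k\otimes_R R_i))$ is exactly what is not known, and no argument is supplied for it. Note also that the paper's definition normalizes by the minimal number of generators $\dim_k(k\otimes_R M_n)$, not by $\ell_R(M_n/\underline{y}M_n)$ as in your displayed limit.

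It is worth pointing out that even for the partial result the paper does prove (\Cref{weakperfectlimCM}: \emph{if} a perfectoid tower arising from $(R,I_0)$ exists, with perfect residue field and mixed characteristic, then its layers form a lim CM sequence), the route is quite different from the one you sketch. You propose to work in the perfectoid completion $R_\infty$, get almost vanishing of Koszul homology there, and descend it quantitatively to each Noetherian layer $R_i$; that descent is precisely the step nobody knows how to do, and the paper avoids it entirely. Instead, the paper kills a generator $f_0$ of $I_0$ (using \Cref{RiTorsion} and \Cref{LimCMSeqProp} to pass between $R_i$ and $R_i/f_0R_i$), identifies $R_i/f_0R_i$ with $R_i^{s.\flat}/f_0^{s.\flat}R_i^{s.\flat}$ via tilting, and then observes that the tilted tower is the perfect tower $R_0^{s.\flat}\to (R_0^{s.\flat})^{1/p}\to\cdots$ in characteristic $p$, which is lim Cohen--Macaulay by the Bhatt--Hochster--Ma theorem on finite Frobenius pushforwards. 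So the genuinely open content of the conjecture is the existence of such towers (or some substitute) over an arbitrary complete local domain with algebraically closed residue field, not the almost-vanishing descent you identify; your plan would still need that existence as an input, which is the same missing ingredient in a different guise.
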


Motivated by this conjecture, we prove a fundamental result in connection with lim Cohen--Macaulay sequences asserting that perfectoid towers are a source of producing non-trivial examples of lim Cohen--Macaulay sequences. See Definition \ref{invqperf} for notation.

\begin{maintheorem}[\Cref{weakperfectlimCM}]
\label{Mtheorem2}
Let $(R,\fm,k)$ be a complete Noetherian local domain of mixed characteristic with perfect residue field of characteristic $p>0$. 
Suppose that there exists a perfectoid tower $( \{ R_i \}_{i \geq 0}, \{t_i\}_{i \geq 0})$ arising from $(R, I_0)$ for some ideal $I_0 \subseteq R$. Then $\{ R_i \}_{i \geq 0} $ is a lim Cohen--Macaulay sequence of algebras.
\end{maintheorem}

%Thus, it is interesting to ask which singular rings allow such structures. 
%To achieve the goal, we start explaining that a perfectoid tower or a lim Cohen-Macaulay sequence can be constructed from a $\delta$-ring (a commutative ring with a Frobenius lift). $\delta$-rings play a fundamental role in the theory of prismatic cohomology. 
%It was recently shown in \cite[Theorem 4.5]{I23} that a large class of perfectoid towers are constructed from prisms (see also \cite[Corollary 5.5]{I23}). 
%However, it is not clear whether a given perfectoid tower arises from a prism. We should point out that no systematic technique is known for constructing Noetherian $\delta$-rings in mixed characteristic.
%We plan to explore this issue in the forthcoming \cite{HS24} by considering a weaker condition imposed on the definition of perfectoid towers. 
%This idea originates from Andreatta-Faltings tower in connection with the almost purity theorem. 
%In any case, we will focus on perfectoid towers in the present article. 
The key to the proof of Main Theorem \ref{Mtheorem2} is that the tilt of a perfectoid tower is a lim Cohen--Macaulay sequence.
%Hence this is one of the applications of tilting of perfectoid towers in Noetherian ring theory.
By virtue of \Cref{Mtheorem2}, the existence of perfectoid towers over any complete Noetherian local domain becomes a significant question for solving the positivity conjecture of Serre's intersection multiplicity. Moreover, it seems that there is no systematic way in producing perfectoid towers over Noetherian local rings. In other words, their existence is present to us in different guises, making this quest worthwhile. Let us explain methodology in what follows.

%%%ここでPerfectoid purityの背景を説明する．
In order to give another application of the existence of perfectoid towers, we explore the deformation problem of perfectoid purity and provide many examples which are Gorenstein but not complete intersections.
In \cite{BMPSTWW24}, \textit{Perfectoid pure singularities} are introduced as a mixed characteristic analogue of $F$-pure singularities and their fundamental properties are established.
Examples of perfectoid pure singularities are also provided under the assumption that a given ring is locally complete intersection.
The deformation problem of perfectoid purity first appeared in \cite[Theorem 6.6]{BMPSTWW24} as an inversion of adjunction-type theorem.
As a special case of a deformation of perfectoid purity, the following conjecture is proposed in \cite{BDSW25}.

\begin{conjecture}[{\cite[Conjecture 1.1]{BDSW25}}]\label{ConjectureDeformPerfdpurity}
    Let $(R,\fm,k)$ be a $p$-torsion-free complete local Gorenstein ring in mixed characteristic $(0,p)$.
    If $R/pR$ is an $F$-pure domain, then $R$ is perfectoid pure.
\end{conjecture}

They also proposed other conjectures related to Conjecture \ref{ConjectureDeformPerfdpurity}.
If readers are interested in the related conjectures, see \cite{BDSW25}.
In this paper, we prove that Conjecture \ref{ConjectureDeformPerfdpurity} holds if a given Gorenstein local ring admits a perfectoid tower.

\begin{maintheorem}[\Cref{DeformPerfdPurity}]\label{MainTheorem3}
    Let $(R,\fm,k)$ be a Gorenstein local ring whose residue field is $F$-finite.
    Assume that there is a perfectoid tower $(\{ R_i \}_{i \geq 0}, \{ t_i \}_{i \geq 0})$ arising from some pair $(R, I_0)$ such that $R$ is $I_0$-torsion-free.\footnote{By definition, the ideal $I_0$ is principal and contains $p$. Hence this theorem covers the setting of \Cref{ConjectureDeformPerfdpurity}.}
    Suppose that $\overline{R} =R/I_0$ is $F$-split. 
    Then $R$ is perfectoid pure.
    %In particular, the conclusion holds if $R$ is a Gorenstein local log-regular ring with respect to some log ring structure.
\end{maintheorem}

In the final part of this paper, we discuss the construction of perfectoid towers by using Frobenius lifts.
The key to this construction is a $\phi$-stable ideal, which is an ideal $I$ of a $\delta$-ring $(A,\delta)$ with the associated Frobenius lift $\phi$ satisfying the inclusion $I^{[\phi]} = \langle\phi(a) ~|~a \in I\rangle \subseteq I$.
In general, for a $\delta$-ring $A$ with the associated Frobenius lift $\phi$ and an ideal $I$, the quotient ring $R:=A/I$ does not necessarily admit the Frobenius lift induced by $\phi$.
However, if $I$ is $\phi$-stable, the Frobenius lift $\phi$ induces the ring map $\phi_{R}$ on $R$.
We define an analogue of the finite perfection $R_i$ as a finite colimit $\colim\{R \xrightarrow{\phi_{R}} R \xrightarrow{\phi_{R}}\cdots\xrightarrow{\phi_{R}}R \}$.

The third main result of this paper is to establish a method of constructing perfectoid towers arising from some $p$-torsion-free Noetherian local rings.

\begin{maintheorem}[{\Cref{pTorFreePerfdTowers}}]\label{MainTheorem2}
    Let $(V, (p), k)$ be a complete unramified discrete valuation ring in mixed characteristic with perfect residue field $k$. 
    Let $(A,\delta)$ be a $\delta$-ring with the associated Frobenius lift $\phi$ and let $I$ be a $\phi$-stable ideal.
    We set $R := A/I$.
    Suppose that the following conditions hold:
    \begin{itemize}
        \item $(R,\fm,k)$ is a Noetherian local $V$-algebra whose residue field is $k$.
%\item $p^nR \neq 0$ for any $n>0$.
        \item $R$ is $p$-torsion-free.
 %       \item $\phi_R$ is injective.
        \item $\overline{R} :=R/pR$ is reduced.
    \end{itemize}
    Also, we set $V_i := V[p^{1/p^i}]$ and the inclusion map $t'_i : V_i \to V_{i+1}$ for any $i \geq 1$.
    Then the following assertions hold.
    \begin{enumerate} 
\item
The tower $(\{S_i\}_{i \geq 0}, \{ s_i \}_{i \geq 0}) := (\{ R_i \otimes_V V_i\}_{i \geq 0}, \{(\tau_R)_i \otimes_V t_i'\}_{i \geq 0})$ is a perfectoid tower arising from $(R,(p))$, where $(\tau_R)_i : R_i \to R_{i+1}$ is the canonical ring map.

\item
The tilt $(\{S_i\}_{i \geq 0}, \{ s_i \}_{i \geq 0})$ is isomorphic to the tower
\[
\xymatrix{
%R_0^{s.\flat} \to R_1^{s.\flat} \otimes_{\overline{V}} V_1^{s.\flat} \to R_2^{s.\flat}\otimes_{\overline{V}} V_2^{s.\flat} \to \cdots.
\overline{R_0}\otimes_{\overline{V}} V^{s.\flat} \ar[r] & \overline{R_1} \otimes_{\overline{V}} V_1^{s.\flat} \ar[r] & \overline{R_2} \otimes_{\overline{V}} V_2^{s.\flat} \ar[r] & \cdots.
}
\]
where $\overline{V}$ is the residue field $k$ of $V$.
\end{enumerate}
\end{maintheorem}

Incidentally, a similar construction given in \Cref{MainTheorem2} is provided in \cite{I23}.
While the construction given in \cite{I23} relies on the method of prisms established by Bhatt and Scholze together with $\delta$-structures of rings, we focus on the stability of Frobenius lifts.

\subsection{Computational results}
By applying \Cref{MainTheorem2}, we construct perfectoid towers for several classes of $p$-torsion-free Noetherian local rings. 
For example, a quotient ring $W(k)\llbracket x_2,\ldots, x_d \rrbracket /I$, where $I$ is an ideal generated by squarefree monomials or binomials, admits a perfectoid tower.
For details, see \Cref{ExamplePerfdTowerp-torfree}.
In particular, we construct perfectoid towers arising from several determinantal rings defined by $2$-minors.
%More accurately, we consider a determinantal ring $R:= W(k)\llbracket x_2,\ldots,x_d \rrbracket/I_2(M)$ of $2$-minors defined by the matrix
%\[
%M :=
%\left(
%\begin{matrix}
%M_{11} & M_{12} & \cdots & M_{1n} \\ 
%M_{21} & M_{22} & \cdots & M_{2n} \\
%\vdots & \vdots & \ddots & \vdots \\
%M_{m1} & M_{m2} & \cdots & M_{mn} \\
%\end{matrix}
%\right).
%\]
%$If $R$ is reduced, then it admits a perfectoid tower.
Some ladder determinantal rings have a perfectoid tower in the same manner as above.
Singularities of ladder determinantal rings in positive characteristic are already well-studied.
Indeed, a sufficient condition for a ladder determinantal ring to be $F$-pure was provided by Conca and Herzog \cite{CH97}, and De Stefani--Monta{\~n}o--Betancourt--Seccia--Varbaro recently proved that any ladder determinantal ring in positive characteristic is $F$-pure \cite{DSMNBSV26}.
A criterion for a ladder determinantal ring to be Gorenstein (resp. a complete intersection) was given by Conca \cite{Con95} (resp. Glassbrenner and Smith \cite{GS95}).
By combining their results with our main theorems, we obtain examples of perfectoid pure singularities, which are Gorenstein but not a complete intersection.
We also mention a geometric method. 
In \cite{IS24}, a deformation theoretic technique is used to find some new classes of projective varieties that are Frobenius-liftable over the ring of Witt vectors. Using these varieties, we show that the associated ring of sections provides a normal and non-Cohen-Macaulay ring with a Frobenius lift (see \Cref{delta-structure}).

At the end of this paper, we will also provide some example of a perfectoid tower with $p$-torsion elements. This example is constructed from the quotient ring of monomial ideals generated by squarefree monomials such as $p^{\epsilon_1}x_2^{\epsilon_2} \cdots x_d^{\epsilon_d}$ in $\mathbb{Z}_p\llbracket x_2,\ldots, x_d \rrbracket$ for $\epsilon_i \in \{0, 1\}$.
This example is also constructed by using a Frobenius lift on $\mathbb{Z}_p\llbracket x_2,\ldots, x_d \rrbracket$, even though the resulting quotient ring is not a $\delta$-ring (see \Cref{egnon-deltaring}).
%This is actually the first example of perfectoid towers which contain $p$-torsion elements.
%This paper is organized as follows.
%In \S 2, we recall the definition and basic properties of perfect and perfectoid towers.
%In \S 3, we discuss the relationship between $\delta$-rings, perfectoid towers, and lim Cohen-Macaulay sequences.
%We also give a construction of big Cohen--Macaulay algebras from $\delta$-rings.
%In \S 4, we discuss the deformation problem of perfectoid purity via perfectoid towers.
%In \S 5, we construct several examples of perfectoid towers by using Frobenius lifts.

\subsection{Convention}
Throughout this paper, we follow the convention stated below. 
We fix a prime $p>0$. 
All rings are assumed to be commutative.
We say that a \textit{tower of rings} is a direct system of rings indexed by non-negative integers. 
We denote a tower of rings by $(\{R_i\}_{i \geq 0},\{t_i\}_{i \geq 0})$ or $R_0 \xrightarrow{t_0} R_1 \xrightarrow{t_1} R_2 \xrightarrow{t_2} \cdots$, where $t_i : R_i \rightarrow R_{i+1}$ is a ring map.

\begin{acknowledgement}
The authors are sincerely grateful to Yves Andr\'e for pointing out the relation between perfectoid towers and lim Cohen--Macaulay sequences.
His comment served as the starting point of this research.
The authors are also grateful to Ryo Ishizuka for many fruitful discussions, particularly in the establishment of \Cref{MainTheorem3}.
The authors would like to thank Kazuki Hayashi for his careful reading and many helpful comments.
The authors would also like to thank Kei Nakazato for his advice.
The first author is deeply grateful to Ken-ichi Yoshida for instructive discussions on monomial ideals in regular local rings.
\end{acknowledgement}

\section{Basics on perfectoid towers}

In this section, we review the definition of perfectoid towers and their tilts, and collect properties that will be used in the subsequent sections. 
First, let us recall perfect towers and ($p$-)purely inseparable towers from \cite{INS23}.
\begin{definition}[Perfect towers]
A \emph{perfect tower} is a tower of rings that is isomorphic to a tower:  
\begin{equation*}
\xymatrix{
R \ar[r]^{F_{R}} & R \ar[r]^{F_{R}} & R \ar[r]^{F_R} & \cdots, 
}
\end{equation*}
where $R$ is a reduced $\mathbb{F}_p$-algebra and $F_R$ is the Frobenius endomorphism on $R$.
\end{definition}

\begin{definition}
\label{invqperf}
Let $R$ be a ring, let $I_0\subseteq R$ be an ideal, and let $p>0$ be a prime number. A tower $(\{R_i\},\{t_i\})_{i \geq 0}$ is called a \textit{$p$-purely inseparable tower (or purely inseparable tower, for short) arising from $(R, I_0)$} if it satisfies the following axioms.
%%%%%%%%%%%%%%%%%%%%%%%%
\begin{itemize}
\item[(a)]
$R_{0}=R$ and $p\in I_0$. 
\item[(b)]
For any $i \geq 0$, the ring map $\overline{t_i}: R_i/I_0R_i\to R_{i+1}/I_0R_{i+1}$ induced by $t_i$ is injective. 
\item[(c)]
For any $i\geq 0$, the image of the Frobenius endomorphism on $R_{i+1}/I_0R_{i+1}$ is contained in the image of $\overline{t_{i}}: R_{i}/I_0R_{i}\to R_{i+1}/IR_{i+1}$.  
\end{itemize}
\end{definition}

In the following, we denote $R_i/I_0{R_i}$ by $\overline{R_i}$ for a purely inseparable tower $(\{R_i\},\{t_i\})_{i \geq 0}$. 
For a purely inseparable tower $(\{R_i\}, \{t_i\})_{i \geq 0}$ arising from $(R,I_0)$, by the axioms (b) and (c), there exists a ring map $F_i : \overline{R_{i+1}} \to \overline{R_i}$ uniquely such that the following diagram commutes:
\[
\xymatrix{
R_{i+1}/I_0 R_{i+1}\ar@{->}[rrd]_{F_i}\ar@{->}[rr]^{F_{\overline{R_{i+1}}}}&&R_{i+1}/I_0 R_{i+1}\\
 &&R_{i}/I_0 R_{i}\ar[u]^{\overline{t_{i}}},
}
\]
where $F_{\overline{R_{i+1}}}$ is the Frobenius endomorphism on $\overline{R_{i+1}}$.
We call $F_i$ \textit{the $i$-th Frobenius projection} of $(\{R_i\}, \{t_i\})_{i \geq 0}$ associated to $(R,I_0)$.

\begin{definition}{(Perfectoid towers)}\label{DefPerfectoidTowers}
Let $R$ be a ring, and let $I_{0}\subseteq R$ be an ideal. 
\begin{enumerate}
\item
A tower $(\{R_{i}\}_{i\geq 0}, \{t_{i}\}_{i\geq 0})$ is called a \textit{(p-)perfectoid tower arising from} $(R, I_{0})$ if it is a $p$-purely inseparable tower arising from $(R,I_0)$ and satisfies the following additional axioms.
\begin{itemize}
\item[(d)]
For every $i\geq 0$, the $i$-th Frobenius projection $F_i : R_{i+1}/I_{0}R_{i+1} \to R_{i}/I_{0}R_{i}$ is surjective. 
\item[(e)]
For every $i \geq 0$, $R_i$ is an $I_0$-adically Zariskian ring (i.e.\ $I_0R_i$ is contained in the Jacobson radical of $R_i$).
\item[(f)]
$I_{0}$ is a principal ideal, and $R_{1}$ contains a principal ideal $I_{1}$ that satisfies the following axioms. 
\begin{itemize}
\item[(f-1)]
$I_1^p = I_0R_1$. 
\item[(f-2)] 
For every $i \geq 0$, $\ker (F_i)=I_1(R_{i+1}/I_{0}R_{i+1})$.
\end{itemize}
\item[(g)]
For every $i\geq 0$, $I_{0}(R_{i})_{I_{0}\textnormal{-tor}}=(0)$. Moreover, there exists a (unique) bijective map 
$(F_{i})_{\textnormal{tor}}: (R_{i+1})_{I_{0}\textnormal{-tor}}\to (R_{i})_{I_{0}\textnormal{-tor}}$ such that the diagram: 
\begin{equation}\label{24519N}
\vcenter{
\xymatrix{
(R_{i+1})_{I_0\textnormal{-tor}}\ar[d]_{(F_i)_{\textnormal{tor}}}\ar[r]^{\varphi_{I_{0}, R_{i+1}}}&R_{i+1}/I_{0}R_{i+1}\ar[d]^{F_{i}}\\
(R_{i})_{I_{0}\textnormal{-tor}}\ar[r]_{\varphi_{I_{0}, R_i}}&R_{i}/I_{0}R_{i}
}}
\end{equation}
commutes, where $\varphi_{I_0, R_i}: (R_i)_{I_0\textnormal{-tor}}\to R_i/I_0R_i$ is the composition of the inclusion map $(R_i)_{I_0\textnormal{-tor}} \hookrightarrow R$ and the projection $R_i \twoheadrightarrow R_i/I_0R_i$.
We note that this map is injective (see \cite[Corollary 3.19.]{INS23})
\end{itemize}
\item 
For a perfectoid tower $(\{ R_i \}_{i \geq 0}, \{ t_i\}_{i \geq 0})$, the \textit{tilt of the perfectoid tower} $(\{ (R_i)^{s.\flat}_{I_0} \}_{i \geq 0}, \{ (t_i)^{s.\flat}_{I_0} \}_{i \geq 0})$ is defined as the tower whose layers and transition maps are given as follows:
\begin{itemize}
    \item $(R_i)_{I_0}^{s.\flat} := \vpl \{ \cdots \xrightarrow{F_{i+1}} R_{i+1}/I_0R_{i+1} \xrightarrow{F_i} R_i/I_0R_i  \}$.
    \item $(t_i)_{I_0}^{s.\flat} : R_i^{s.\flat} \to R_{i+1}^{s.\flat}\ ; (\overline{a_j})_{j \geq i} \mapsto (\overline{t_{i+j}}(\overline{a_j}))_{j \geq i}$.
\end{itemize}
The ring $(R_i)^{s.\flat}_{I_0}$ is called the \textit{$i$-th small tilt associated to $(R_0, I_0)$}.
If no confusion occurs, we denote the tilt by $(\{R_i^{s.\flat} \}_{i \geq 0}, \{ t_i^{s.\flat} \}_{i \geq 0})$.
Also, the ideal $I_0^{s.\flat}$ is defined as the kernel of the $0$-th projection $R_0^{s.\flat} \to R_0/I_0$.
\end{enumerate}
\end{definition}

By the definition of $I_0^{s.\flat}$, we obtain the isomorphism
\begin{equation}\label{PerfdTiltIsom}
    R_i^{s.\flat}/I_0^{s.\flat}R_i^{s.\flat} \xrightarrow{\cong}R_i/I_0R_i.
\end{equation}
This isomorphism is one of the most effective tools in the theory of perfectoid towers.

\begin{remark}
    Due to recent work of \cite{BMS18}, \cite{GR23}, and \cite{INS23}, there are several characterizations of perfectoid rings.
    The axiom (g) in \Cref{DefPerfectoidTowers} is based on the characterization provided in \cite[Theorem 3.50]{INS23}.
\end{remark}

We summarize some important properties of perfectoid towers and their tilts, which are needed in this paper.
We refer the reader to \cite{INS23} for their proofs.

\begin{lemma}\label{PropPerfdTower}
Let $( \{ R_i \}_{ i \geq 0}, \{ t_i \}_{i \geq0})$ be a perfectoid tower arising from some pair $(R_0, I_0)$. 
Let $(\{ R_i^{s.\flat} \}_{i \geq 0}, \{ t_i^{s.\flat} \}_{i \geq 0})$ be the tilt.
Then the following assertions hold.
\begin{enumerate}
    \item 
    $(\{ R_i^{s.\flat} \}_{i \geq 0}, \{ t_i^{s.\flat} \}_{i \geq 0})$ is a perfect tower.
    In particular, $R_i^{s.\flat}$ is reduced.
    \item $I_0^{s.\flat}$ is a principal ideal.
    
    \item The $I_0$-torsions $(R_i)_{I_0\textnormal{-tor}}$ of $R_i$ is isomorphic to the $I_0^{s.\flat}$-torsions $(R_i^{s.\flat})_{I_0^{s.\flat}\textnormal{-tor}}$ of $R_i^{s.\flat}$ as non-unital rings.
    In particular, a generator $f_0$ of $I_0$ is a non-zero-divisor on $R_0$ if and only if so is a generator $f_0^{s.\flat}$ of $I_0^{s.\flat}$ on $R_0^{s.\flat}$ for any $i \geq 0$.
    
    \item The $I_0$-adic completion of the colimit $R_\infty :=\colim R_i$ is a perfectoid ring.
    Moreover, if $\widehat{R_\infty}$ denotes the $I_0$-adic completion of $R_\infty$, then the tilt $\widehat{R_\infty}^{\flat}$ is isomorphic to the $I_0^{s.\flat}$-adic completion of the colimit $R_\infty^{s.\flat} := \colim R_i^{s.\flat}$.
    
    \item For any $i \geq 0$, if $R_i$ is Noetherian, then so is $R_i^{s.\flat}$.
    Moreover, the converse holds if $R_i$ is $I_0$-adically complete and separated.
%    \item For any $i \geq 0$, $R_i$ is a local ring if and only if so is $R_i^{s.\flat}$.
\end{enumerate}
\end{lemma}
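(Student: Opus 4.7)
The overall plan is to verify each of the six assertions by unwinding the axioms of \Cref{DefPerfectoidTowers}, working in the order listed so that later parts can build on earlier ones. A recurring observation I would extract first is that, by definition of the Frobenius projection $F_i$ together with the injectivity of $\overline{t_i}$ in axiom (b), the two identities $\overline{t_i}\circ F_i = F_{\overline{R_{i+1}}}$ and $F_i\circ \overline{t_i} = F_{\overline{R_i}}$ hold. These relate the inverse system defining $R_i^{s.\flat}$ to a Frobenius inverse system on $\overline{R_i}$, from which assertion (1) follows cleanly: Frobenius on $R_i^{s.\flat}$ acts componentwise by $p$-th powers, and the identity $a_{j+1}^p = \overline{t_j}(a_j)$ combined with the injectivity of $\overline{t_j}$ forces any nilpotent sequence to vanish, so each $R_i^{s.\flat}$ is reduced; the transition maps $t_i^{s.\flat}$ unwind to the Frobenius under this identification, making the tilted tower perfect.

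For (2), I would construct a principal generator of $I_0^{s.\flat}$ as follows. Let $f_1$ generate $I_1$, so by (f-1) its $p$-th power generates $I_0R_1$. The surjectivity in axiom (d) lets me inductively lift $f_1 \bmod I_0R_1$ to a compatible sequence $(0, f_1, f_2, \ldots) \in R_0^{s.\flat}$, and the kernel description in (f-2) ensures that any sequence vanishing at index $0$ is a multiple of this element. Assertion (3) is then essentially a matter of bookkeeping: axiom (g) provides compatible bijections $(F_i)_{\textnormal{tor}}$ that commute with the Frobenius projections via diagram \eqref{24519N}, so passing to the inverse limit yields a non-unital ring isomorphism between the $I_0$-torsion of $R_i$ and the $I_0^{s.\flat}$-torsion of $R_i^{s.\flat}$, and the non-zero-divisor statement for generators follows by comparing the torsion kernels on each side.

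For (4), I would pass to the $I_0$-adic completion of the colimit $R_\infty$ and verify directly that it satisfies a tower-free characterization of a perfectoid ring, such as the one mentioned in the remark after \Cref{DefPerfectoidTowers}; the axioms (d)--(g) assemble in the colimit to yield surjectivity of the Frobenius modulo $p$ together with the required principal generator admitting a compatible system of $p$-power roots. The identification of $\widehat{R_\infty}^{\flat}$ with the completion of $R_\infty^{s.\flat}$ then follows from compatibility of the tilting equivalence with filtered colimits and completion. Assertion (6) is formal: axiom (e) forces $I_0R_i$ into the Jacobson radical, so maximal ideals of $R_i$ correspond to those of $\overline{R_i}$, and the $0$-th projection $R_i^{s.\flat}\twoheadrightarrow \overline{R_i}$ has kernel generated by a topologically nilpotent element, which pins down a unique maximal ideal in $R_i^{s.\flat}$.

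The main obstacle is assertion (5), since Noetherianness is not in general preserved under inverse limits along Frobenius. The plan is to identify the quotient of $R_i^{s.\flat}$ modulo its $0$-th tilt ideal with $\overline{R_i}$ using (1) and (2), and then establish that $R_i^{s.\flat}$ is adically complete with respect to the principal ideal produced in (2); Noetherianness of $\overline{R_i}$ together with the principal generator then lifts to $R_i^{s.\flat}$ by a standard complete-filtered argument. For the converse direction, assuming $R_i$ is $I_0$-adically complete and separated, one exploits that the tilting correspondence at each finite stage from (4) transports finite generation in either direction. The technical heart of the argument, and where I would expect the most care, is the verification of adic completeness of the tilt at each finite stage, which requires simultaneously tracking the principal ideal from (2) and the torsion bijections from (g).
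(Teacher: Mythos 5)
Your proposal is correct in outline, but it takes a genuinely different route from the paper: the paper does not argue these assertions at all, it simply quotes them from \cite{INS23} (assertion (1) is Proposition 3.10(2) there, (2) and (3) follow from Theorem 3.35, (4) is Corollary 3.52 together with Lemma 3.55, (5) is Proposition 3.42(2), and (6) is Lemma 3.11(2)), whereas you sketch direct verifications from the axioms of \Cref{DefPerfectoidTowers}, in effect re-deriving the cited results. Your key tools are the right ones: the identities $\overline{t_i}\circ F_i=F_{\overline{R_{i+1}}}$ and $F_i\circ\overline{t_i}=F_{\overline{R_i}}$ for (1), componentwise nilpotence of elements of the kernel of the zeroth projection for (6), and the criterion ``adically complete ring with finitely generated ideal of definition and Noetherian quotient is Noetherian'' for (5). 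What the citation buys is that the genuinely technical points are already settled, and these are exactly the places where your justifications are too quick. In (2), the claim that axiom (f-2) ``ensures that any sequence vanishing at index $0$ is a multiple'' of your candidate generator is not immediate: one must choose the cofactors compatibly along the inverse system, and since $\ker F_j$ is generated by the image of $I_1$ (a $p$-th power of the relevant component of the generator), correcting the error terms requires a successive-approximation argument exploiting completeness of the limit; this is the actual content of \cite[Theorem 3.35]{INS23}, as in the classical case \cite{BMS18}. Similarly in (3), the bijections $(F_i)_{\textnormal{tor}}$ and diagram \eqref{24519N} give an injective non-unital ring map from $(R_i)_{I_0\textnormal{-tor}}$ into $R_i^{s.\flat}$, but identifying its image with precisely the $I_0^{s.\flat}$-torsion is more than bookkeeping. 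For the converse in (5), the cleaner argument is symmetric to the forward one---apply the same criterion to $R_i$ itself, using $R_i/I_0R_i\cong R_i^{s.\flat}/I_i^{s.\flat}$ and the assumed $I_0$-adic completeness---rather than an appeal to a ``finite-stage tilting correspondence,'' which (4) does not actually furnish; you do, however, correctly isolate the adic completeness of the small tilts as the technical heart of (5).
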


\begin{proof}
    The assertion (1) is \cite[Proposition 3.10 (2)]{INS23}. 
    The assertions (2) and (3) follow from \cite[Theorem 3.35]{INS23}.
    The assertion (4) is \cite[Corollary 3.52]{INS23} and \cite[Lemma 3.55]{INS23}.
    The assertion (5) is \cite[Proposition 3.42 (2)]{INS23}.
%    The assertion (6) is \cite[Lemma 3.11 (2)]{INS23}.
\end{proof}

\begin{lemma}
    Let $( \{ R_i \}_{ i \geq 0}, \{ t_i \}_{i \geq0})$ be a perfectoid tower arising from some pair $(R_0, I_0)$.
    Let $(\{ R_i^{s.\flat} \}_{i \geq 0}, \{ t_i^{s.\flat} \}_{i \geq 0})$ be the tilt.
    Then for any $i \geq 0$, $R_i$ is local if and only if so is $R_i^{s.\flat}$ \footnote{The if part is already proved in \cite[Lemma 3.11 (2)]{INS23}.}.
\end{lemma}

\begin{proof}
    Since $R_i$ (resp. $R_i^{s.\flat}$) is $I_0$-adically Zariskian (resp. $I_0^{s.\flat}$-adically Zariskian), we know that $R_i$ (resp. $R_i^{s.\flat}$)is local if and only if so is $R_i/I_0R_i$ (resp. $R_i^{s.\flat}/I_0^{s.\flat}R_0^{s.\flat}$).
    Hence, by the isomorphism (\ref{PerfdTiltIsom}), we obtain the desired equivalence.
\end{proof}

\begin{lemma}
\label{RiTorsion}
Let $( \{ R_i \}_{ i \geq 0}, \{ t_i \}_{i \geq0})$ be a perfectoid tower arising from some pair $(R_0, I_0)$. Then the following assertions are equivalent.
\begin{enumerate}
\item $R_0$ is $I_0$-torsion-free.
\item $R_i$ is $I_0$-torsion-free for any $i \geq 0$.
\end{enumerate}
\end{lemma}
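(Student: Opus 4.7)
The direction $(2) \Rightarrow (1)$ is immediate by specialization to $i=0$, so all the content lies in establishing $(1) \Rightarrow (2)$. My plan is to read the conclusion directly out of the axioms defining a perfectoid tower: specifically, axiom (g) of \Cref{DefPerfectoidTowers} packages exactly the compatibility needed to propagate torsion-freeness up the tower, with no appeal to the tilting machinery required.

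Recall that axiom (g) guarantees, for every $i \geq 0$, the existence of a (uniquely determined) \emph{bijective} map
\[
(F_i)_{\mathrm{tor}} \colon (R_{i+1})_{I_0\textnormal{-tor}} \longrightarrow (R_i)_{I_0\textnormal{-tor}}.
\]
Consequently, if the codomain of this bijection is the zero submodule $(0)$, then so is its domain (the unique preimage of $0$ is $0$). The proof then proceeds by induction on $i$: the base case $(R_0)_{I_0\textnormal{-tor}} = (0)$ is precisely hypothesis (1), and the inductive step from $i$ to $i+1$ is exactly the observation above applied to $(F_i)_{\mathrm{tor}}$. This yields $(R_i)_{I_0\textnormal{-tor}} = (0)$ for every $i \geq 0$, which is statement (2).

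There is essentially no obstacle here, because the bijectivity of $(F_i)_{\mathrm{tor}}$ is baked into the definition of a perfectoid tower. As a sanity check, one could also run the argument through the tilt: by \Cref{PropPerfdTower}(3), torsion-freeness of $R_i$ with respect to $I_0$ is equivalent to torsion-freeness of $R_i^{s.\flat}$ with respect to $I_0^{s.\flat}$; by \Cref{PropPerfdTower}(1)--(2) the tilt is a perfect tower of reduced $\mathbb{F}_p$-algebras with principal distinguished ideal, and in a reduced ring the non-zero-divisor property of a generator is preserved under taking $p^i$-th powers, so torsion-freeness at level $0$ propagates through the tilted tower and then descends back to the original tower via \Cref{PropPerfdTower}(3) applied at each level. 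Either route works, but the argument via axiom (g) delivers the conclusion in a single inductive step.
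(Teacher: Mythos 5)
Your proof is correct, and it takes a genuinely different route from the paper. The paper proves $(1)\Rightarrow(2)$ by passing to the tilt: it invokes \Cref{PropPerfdTower}(3) to transfer $I_0$-torsion-freeness of $R_0$ to $(I_0^{s.\flat})$-torsion-freeness of $R_0^{s.\flat}$, identifies the tilted tower with $R_0^{s.\flat}\hookrightarrow (R_0^{s.\flat})^{1/p}\hookrightarrow\cdots$ using \Cref{PropPerfdTower}(1), checks that each $(R_0^{s.\flat})^{1/p^i}$ is $f_0^{s.\flat}$-torsion free via the $p^i$-th power trick and reducedness, and then descends back to each $R_i$ by \Cref{PropPerfdTower}(3) again — in other words, the paper's argument is essentially your ``sanity check.'' Your primary argument instead reads the statement straight off axiom (g) of \Cref{DefPerfectoidTowers}: since $(F_i)_{\mathrm{tor}}\colon (R_{i+1})_{I_0\textnormal{-tor}}\to (R_i)_{I_0\textnormal{-tor}}$ is bijective and $(R_{i+1})_{I_0\textnormal{-tor}}$ contains $0$, vanishing of the target forces vanishing of the source, and induction starting from hypothesis (1) finishes the proof. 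This is valid and more elementary, requiring none of the tilting results of \cite{INS23}; what the paper's route buys is independence from the particular packaging of axiom (g) — it would survive under alternative characterizations of perfectoid towers where the torsion bijection is a theorem rather than part of the definition — and it highlights the structural point (reducedness and perfectness of the tilt) that recurs elsewhere in the paper. As stated, though, your axiom-(g) induction is a complete and shorter proof.
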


\begin{proof}
This immediately follows from the bijectivity of $(F_i)_{\textnormal{tor}}$ appearing in the axiom (g) of \Cref{DefPerfectoidTowers}.
%The implication $(2) \Rightarrow (1)$ is obvious.
%Suppose that $R_0$ is $I_0$-torsion free.
%Fix a generator $f_0$ of $I_0$.
%Then $R_0^{s.\flat}$ is also $f_0^{s.\flat}$-torsion free by \Cref{PropPerfdTower} (3), where $f_0^{s.\flat}$ is a generator of $(I_0)^{s.\flat}$.
%Since the tilted tower is perfect by \Cref{PropPerfdTower} (1), it is isomorphic to $R_0^{s.\flat} \hookrightarrow (R_0^{s.\flat})^{1/p} \hookrightarrow (R_0^{s.\flat})^{1/p^2} \hookrightarrow \cdots $.
%Hence it suffices to show that $(R_0^{s.\flat})^{1/p^i}$ is $f_0^{s.\flat}$-torsion free.
%Let $x \in (R_0^{s.\flat})^{1/p^i}$ be an element such that $(f_0^{s.\flat})^n x = 0$.
%Then $(f_0^{s.\flat})^{np^i} x^{p^i} = 0$ in $R_0^{s.\flat}$, and hence $x^{p^i} = 0$.
%Since $R_0^{s.\flat}$ is reduced, we obtain $x=0$, as desired.
\end{proof}

\begin{lemma}\label{FinitePerfdTower}
Let $(\{ R_i \}_{i \geq0 },\{t_i\}_{i \geq 0})$ be a perfectoid tower arising from some pair $(R,I_0)$.
Suppose that $0$-th small tilt $R_0^{s.\flat}$ is $F$-finite and each $R_i$ is $I_0$-adically complete and separated.
Then $t_i$ and $t_i^{s.\flat}$ is finite for each $i \geq 0$.
In particular, each $R_i$ (resp. $R_i^{s.\flat}$) is a finitely generated $R_0$-module ($R_i^{s.\flat}$-module).
\end{lemma}

\begin{proof}
Since the tilted tower is perfect, each map $t_i^{s.\flat}$ is finite if and only if $R_0^{s.\flat}$ is $F$-finite.
Moreover, by applying the topological Nakayama's lemma \cite[Theorem 8.4]{Mat86}, we obtain the finiteness of $t_i$.
\end{proof}

\section{$\delta$-rings, perfectoid towers, and lim Cohen--Macaulay sequences}
In this section, we discuss relations of lim Cohen--Macaulay sequences with $\delta$-rings and with perfectoid towers.
In \S \ref{SubSecTowerDelta}, we review some basics on $\delta$-rings and discuss splitting of Frobenius lifts.
In \S \ref{SubSecFrobPower}, we introduce Frobenius powers of ideals in $\delta$-rings and $\phi$-stability of ideals.
In \S \ref{SubSecConstlimCM}, we prove Main Theorem \ref{Mtheorem2}.
We also discuss the construction of lim Cohen--Macaulay sequences and big Cohen--Macaulay algebras from $\delta$-rings.

\subsection{Towers of $\delta$-rings}\label{SubSecTowerDelta}
Let $p$ be a prime number and let $A$ be a commutative ring. For the following material, see also \cite{Rezk}.

\begin{definition}
A pair $(A,\delta)$ is called a \textit{$\delta$-ring} if there is a function $\delta:A \to A$ such that $\delta(0)=\delta(1)=0$ and the following identities are satisfied.
\begin{enumerate}
\item
For $x,y \in A$, $\delta(xy)=x^p\delta(y)+y^p\delta(x)+p\delta(x)\delta(y)$.

\item
For $x,y \in A$, 
$$
\delta(x+y)=\delta(x)+\delta(y)+\dfrac{x^p+y^p-(x+y)^p}{p}.
$$
\end{enumerate}
This map $\delta$ is called a \textit{$p$-derivation on $A$}.
\end{definition}

For a $\delta$-ring $(A,\delta)$, the map $\phi(x)\coloneqq x^p+p\delta(x)$ is a lift of the Frobenius map on $A/pA$. 
We call it \textit{the associated Frobenius lift on $(A,\delta)$} (or \textit{a Frobenius lift on $A$} if there is no confusion).

Let $W_2(A)$ be the ring of Witt vectors of $A$ of length $2$ and $\pi:W_2(A) \to A$ be the projection into the first factor. Then giving a $\delta$-ring structure on $A$ is equivalent to giving a ring map $\omega:A \to W_2(A)$ such that $\pi \circ \omega=\id_A$. Indeed, write $\omega(x)=(x,\delta(x))$. Then the map $\delta:A \to A$ defines the $\delta$-ring structure on $A$ (see \cite[Remark 2.4]{BS22}). Let us recall the construction of towers from $\delta$-rings as introduced in \cite{I23}.

\begin{definition}
\label{pdelta0}
Let $(A,\delta)$ be a $\delta$-ring with the associated Frobenius lift $\phi$. We define a finite colimit
\begin{equation}
\label{perfectdelta0}
A_i\coloneqq \colim\big\{A\underbrace{\xrightarrow{\phi} A \xrightarrow{\phi} \cdots \xrightarrow{\phi}}_{i\textnormal{\ times}}A\big\}.
\end{equation}
We define the transition map $\tau_{i}: A_i \to A_{i+1}$ as the map of the colimit induced by the commutative diagram:
\begin{equation}
\label{perfectdelta1}
\vcenter{\xymatrix{
A \ar[d]^{\id} \ar[r]^{\phi}&A\ar[d]^{\id} \ar[r]^{\phi}& \cdots \ar[r]^{\phi} & A \ar[d]^{\id} \ar[rd]^{\phi}  \\
A \ar[r]_{\phi} & A\ar[r]_{\phi}& \cdots \ar[r]_{\phi} & A \ar[r]_{\phi} & A
}}
\end{equation}

The above datum gives rise to a tower of rings $(\{A_i\}_{i \geq 0},\{\tau_i\}_{i \geq 0})$.
By considering the similar diagram:
\begin{equation}
\label{perfectdelta2}
\vcenter{\xymatrix{
A/pA \ar[d]^{\id} \ar[r]^{\Frob}&A/pA\ar[d]^{\id} \ar[r]^{\Frob}& \cdots \ar[r]^{\Frob} & A/pA \ar[d]^{\id} \ar[rd]^{\Frob}  \\
A/pA \ar[r]_{\Frob} & A/pA\ar[r]_{\Frob}& \cdots \ar[r]_{\Frob} & A/pA \ar[r]_{\Frob} & A/pA
}}
\end{equation}
we have the tower of $\mathbb{F}_p$-algebras $(\{(A/pA)_i\}_{i \ge 0},\{\overline{\tau}_i\}_{i \ge 0})$.

Also, we define the ring map $\phi_i :  A_{i+1} \to A_i$ as the map induced by the following diagram:
\begin{equation}
\label{perfectdelta4}
\vcenter{\xymatrix{
A \ar[r]^{\phi} \ar[d]^{\phi} & A \ar[r]^{\phi} \ar[d]^{\phi} & \cdots \ar[r]^{\phi}& A \ar[r]^{\phi} \ar[d]^{\phi} & A \ar[ld]^{\id}\\
A \ar[r]^{\phi} & A \ar[r]^{\phi} & \cdots \ar[r]^{\phi} & A. 
}}
\end{equation}
\end{definition}

We continue the discussions regarding \Cref{pdelta0}. Let us assume that $A/pA$ is reduced (equivalently, the Frobenius map on $A/pA$ is injective). The transition map $\overline{\tau}_i$ is purely inseparable and the Frobenius map on $(A/pA)_i$ factors as $(A/pA)_i \twoheadrightarrow (A/pA)_{i-1} \xrightarrow{\overline{\tau}_i} (A/pA)_i$. In particular, this shows that $(A/pA)_i \cong (A/pA)^{1/p^i}$ as rings. Thus, we have the following fact.

\begin{lemma}
\label{purelyinseparable}
Assume that $A/pA$ is reduced. Then $(\{(A/pA)_i\}_{i \ge 0},\{\overline{\tau}_i\}_{i \ge 0})$ is a perfect tower.
\end{lemma}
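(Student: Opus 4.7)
The plan is to show that the tower $(\{(A/pA)_i\}_{i \geq 0}, \{\overline{\tau}_i\}_{i \geq 0})$ is isomorphic, as a diagram of rings, to the Frobenius tower $A/pA \xrightarrow{F_{A/pA}} A/pA \xrightarrow{F_{A/pA}} A/pA \to \cdots$. Since $A/pA$ is an $\mathbb{F}_p$-algebra which is reduced by hypothesis, such an isomorphism immediately realises the tower as a perfect tower in the sense of the definition. The reducedness assumption enters only at this final step.

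The key step is the identification $(A/pA)_i \cong A/pA$ itself. By construction, $(A/pA)_i$ is the colimit in commutative rings of the finite chain $A/pA \xrightarrow{F} A/pA \xrightarrow{F} \cdots \xrightarrow{F} A/pA$ of length $i$, indexed by the linearly ordered poset $\{0 < 1 < \cdots < i\}$. Since this index category has a terminal object, the general fact that colimits over such categories are computed by evaluation at the terminal object gives that $(A/pA)_i$ is canonically the final copy of $A/pA$, with cocone maps $g_j = F^{\,i-j}$ from the $j$-th copy (so $g_i = \mathrm{id}$). This step is formal and does not use reducedness.

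Under this identification I would then verify directly from diagram (\ref{perfectdelta2}) that $\overline{\tau}_i$ corresponds to $F_{A/pA}$: the natural transformation from the upper length-$i$ chain to the lower length-$(i+1)$ chain is the identity in every column except the rightmost one, where it is Frobenius, and composing with the cocone maps from the previous step shows that the induced map on colimits equals $F_{A/pA}$. Putting the two pieces together yields the isomorphism of towers and finishes the proof. There is no substantive obstacle beyond this short diagram chase; the only care required is to track the cocone maps $g_j = F^{\,i-j}$ so as to identify the transition map correctly.
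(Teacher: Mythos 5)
Your proof is correct and follows essentially the same route as the paper: the discussion preceding the lemma likewise identifies each layer $(A/pA)_i$ with a single copy of $A/pA$ (written there as $(A/pA)^{1/p^i}$) and the transition maps $\overline{\tau}_i$ with the Frobenius endomorphism, so that the tower is isomorphic to the Frobenius tower on the reduced $\mathbb{F}_p$-algebra $A/pA$. Your write-up merely makes the finite-colimit computation explicit (colimit over a poset with a terminal object, with cocone maps $F^{\,i-j}$) and isolates the use of reducedness to the final appeal to the definition of a perfect tower, which is a harmless repackaging of the same argument.
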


The diagram $(\ref{perfectdelta1})$ is compatible with $(\ref{perfectdelta2})$ with respect to the quotient map $A \to A/pA$.
In other words, we obtain the commutative diagram of towers whose vertical maps are exact:
\begin{equation}
\label{perfectdelta3}
\vcenter{\xymatrix{
A=:A_0 \ar[d]^{\times p} \ar[r] &A_1\ar[d]^{\times p} \ar[r] & \cdots \ar[r] & A_n \ar[d]^{\times p} \ar[r] &\cdots \\
A=:A_0 \ar[d]^{\pi_0} \ar[r] &A_1\ar[d]^{\pi_1} \ar[r] & \cdots \ar[r] & A_n \ar[d]^{\pi_n} \ar[r] &\cdots \\
A/pA=:(A/pA)_0 \ar[r] & (A/pA)_1 \ar[r] & \cdots \ar[r] & (A/pA)_n \ar[r] & \cdots \\
}}
\end{equation}
Note that the maps $\pi_i$ $(i=0,1,2,\ldots)$ are surjective with kernel generated by $p$. 
%Recall that a ring $A$ with an ideal $I \subseteq A$ is \textit{$I$-Zariskian} if $I$ is contained in the Jacobson radical of $A$.

\begin{lemma}\label{FrobLiftInjpZariskian}
    Let $(A,\delta)$ be a $p$-torsion-free $\delta$-ring with the associated Frobenius lift $\phi$.
    Suppose that $A$ is a $p$-Zariskian Noetherian ring such that $A/pA$ is reduced.
    Then $\phi$ is injective.
\end{lemma}

\begin{proof}
    Since $A$ is $p$-adically separated by applying \cite{FGK11}, this follows from \cite[Lemma 3.7]{I23}.
\end{proof}

\begin{remark}
We remark that the paper \cite{IN24} exploits the Frobenius structure on the prismatic cohomology to prove a Kunz-type regularity theorem in mixed characteristic.
\end{remark}

\subsection{Frobenius powers of ideals in $\delta$-rings}\label{SubSecFrobPower}

In commutative ring theory in positive characteristic, the Frobenius powers of ideals of rings are one of the main tools to analyze properties of rings and ideals.
In this subsection, let us consider the properties of Frobenius powers of ideals in $\delta$-rings.

We define the $\phi$-powers of ideals.

\begin{definition}
Let $(A,\delta)$ be a $\delta$-ring with the associated Frobenius lift $\phi$ and let $I$ be an ideal of $A$.
Then $I^{[\phi]}$ is defined as the ideal of $A$ generated by $\phi(a)$ for all $a \in I$. 
\end{definition}

One can define the $\phi$-powers of $I$ even if $I$ is not finitely generated.
However, if $I$ is finitely generated, then $I^{[\phi]}$ is independent of the choice of generators.

\begin{lemma}
\label{FrobeniusLiftPower}
Let $(A, \delta)$ be a $\delta$-ring with the associated Frobenius lift $\phi$ and let $I$ be a finitely generated ideal of $A$ generated by $f_1,\ldots, f_r$.
Then $I^{[\phi]}$ is a finitely generated ideal generated by $\phi(f_1),\ldots, \phi(f_r)$.
\end{lemma}

\begin{proof}
This is straightforward.
\end{proof}

If $A$ is an $\mathbb{F}_p$-algebra, then the Frobenius power of an ideal $I^{[p]}$ is contained by $I$.
However, $I^{[\phi]}$ is not in general contained in $I$.

\begin{example}
Set $A:=\mathbb{Z}_p\llbracket x,y \rrbracket$ and $I := (p^3+x^3+y^3)$.
Then $I$ does not contain $I^{[\phi]}$.
Indeed, by Lemma \ref{FrobeniusLiftPower}, we obtain that $I^{[\phi]} = (p^3+x^{3p}+y^{3p})$.
Suppose that $p^3+x^{3p} + y^{3p} = g\cdot(p^3+x^3+y^3)$ for some $g \in A$.
We have $(1-g)p^3=x^{3}(x^{3p-3}-g)+y^3(y^{3p-3}-g)$, which means that $p$ divides $x^{3p-3}-g$ and $y^{3p-3}-g$.
This implies that $x^{3p-3}$ and $y^{3p-3}$ is also divided by $p$.
But this is a contradiction.
\end{example}

\begin{definition}
Let $(A, \delta)$ be a $\delta$-ring with the associated Frobenius lift $\phi$. Then an ideal $I\subseteq A$ is \textit{$\phi$-stable} if the inclusion $I^{[\phi]} \subseteq I$ holds.
\end{definition}

By the comments on Ishizuka in \cite[Proposition 6.9]{I23}, if $A/I$ is $p$-torsion-free for a $\delta$-ring $(A,\delta)$, $I$ is $\phi$-stable if and only if $I$ is $\delta$-stable in the sense of $\delta(I) \subset I$.
In this case, $A/I$ is equipped with the $\delta$-structure induced by $(A,\delta)$.
However, if $A/I$ is not $p$-torsion-free, $A/I$ is not necessarily equipped with the $\delta$-structure even if $I$ is $\phi$-stable.
The following example is found during the first author's discussion with Ishizuka.

\begin{example}\label{egnon-deltaring}
    Set $A := \mathbb{Z}_p \llbracket x \rrbracket$ equipped with the $p$-derivation $\delta$ such that $\delta(x) =0$ and $I := (px)$.
    Then $R := \mathbb{Z}_p \llbracket x \rrbracket / (px)$ admits the Frobenius lift induced by that of $A$ because $I$ is a $\phi$-stable ideal in $A$.
    Nevertheless, $R$ is not equipped with a $\delta$-structure.
    Indeed, assume that there exists a $p$-derivation $\delta : R \to R$ on $R$.
    Then the following equations hold in $R$.
    $$
    0
    = \delta(px) 
    = p^p\delta(x) + x^p\delta(p) + p\delta(p)\delta(x) 
    = p\delta(x)(p^{p-1}+\delta(p))+x^p\delta(p).
    $$
    Since $\delta(p)$ is a unit in $R$, this implies that $x^p \in (p)$ in $A$, but this is a contradiction.
\end{example}

It is easy to see that an ideal $I$ of a $\delta$-ring $(A,\delta)$ with the associated Frobenius lift $\phi$ is $\phi$-stable if and only if $\phi(x)$ lies in $I$ for any $x \in I$.
Also, several operations on ideals are compatible with Frobenius lifts. The proof of the following lemma is immediate.

%\begin{lemma}
%An ideal $I \subseteq A$ of a $\delta$-ring $(A,\delta)$ with the associated Frobenius lift $\phi$ is $\phi$-stable if and only if $\phi(f)$ lies in $I$ for any element $f \in I$.
%\end{lemma}

\begin{lemma}
\label{IdealOperationsStable}
Let $(A,\delta)$ be a $\delta$-ring with the associated Frobenius lift $\phi$. 
Let $I,J$ be $\phi$-stable ideals of $A$.
Then the following ideals are also $\phi$-stable:
\begin{enumerate}
    \item The intersection $I \cap J$.
    \item The sum $I + J$.
    \item The product $I J$.
    \item The radical $\sqrt{I}$.
\end{enumerate}
\end{lemma}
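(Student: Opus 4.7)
The plan is to leverage the fact that $\phi$ is a \emph{ring homomorphism} (it is a lift of Frobenius, hence in particular multiplicative and additive), together with the preceding lemma, which characterizes $\phi$-stability by the pointwise condition $\phi(f)\in I$ for all $f\in I$. Once this is in place, each of the four closure properties reduces to a one-line verification.

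Concretely, I would proceed case by case. For the intersection $I\cap J$, take $f\in I\cap J$; since $I$ and $J$ are $\phi$-stable, $\phi(f)$ lies in both $I$ and $J$, hence in $I\cap J$. For the sum $I+J$, any element is of the form $f+g$ with $f\in I$ and $g\in J$, and additivity of $\phi$ gives $\phi(f+g)=\phi(f)+\phi(g)\in I+J$; combined with the previous lemma this suffices. For the product $IJ$, a general element is a finite sum $\sum_k f_k g_k$ with $f_k\in I$, $g_k\in J$, and multiplicativity and additivity of $\phi$ yield $\phi\bigl(\sum_k f_k g_k\bigr)=\sum_k\phi(f_k)\phi(g_k)\in IJ$. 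For the radical $\sqrt{I}$, take $f\in\sqrt{I}$ with $f^n\in I$ for some $n\geq 1$; then $\phi(f)^n=\phi(f^n)\in I$, so $\phi(f)\in\sqrt{I}$.

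There is essentially no obstacle here: the only substantive input is that $\phi$ is a ring homomorphism, which is built into the definition of a Frobenius lift associated to a $\delta$-structure, and the pointwise reformulation of $\phi$-stability established in the previous lemma. I would phrase the proof as a single paragraph verifying the four bullets in order, each reduced to the identities $\phi(f+g)=\phi(f)+\phi(g)$, $\phi(fg)=\phi(f)\phi(g)$, and $\phi(f^n)=\phi(f)^n$ applied to a generic element of the ideal in question.
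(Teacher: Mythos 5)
Your proof is correct and is exactly the routine verification the paper has in mind: the authors omit the proof entirely, remarking only that these lemmas "are easy to prove," and your argument — using that $\phi$ is a ring homomorphism together with the pointwise criterion $\phi(f)\in I$ for all $f\in I$ from the preceding lemma — is the intended one. No gaps; all four cases are handled correctly.
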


The above lemma provides examples of $\phi$-stable ideals.
%\begin{proof}
%They are straightforward.
%The first assertion follows from the relation $\phi(I \cap J) \subseteq \phi(I) \cap \phi(J) \subseteq I \cap J$.
%\end{proof}

\begin{example}\label{ExamplePhiStables}
Set $A := W(k)\llbracket x_2,\ldots, x_d \rrbracket$ equipped with a $p$-derivation $\delta$ defined by $\delta(x_i) = 0$ for any $2 \le i \le d$.
In this case, we have $\phi(x_i) = x_i^p$ for any $2 \leq i \le d$, where $\phi$ is the Frobenius lift defined by $\delta$.
\begin{enumerate}
\item 
An arbitrary monomial ideal in $A$ is $\phi$-stable.
Indeed, let $I = (M_1,\ldots, M_r)$ be a monomial ideal in $x_1 := p, x_2, \ldots, x_d$. By an easy calculation, it follows that $\phi(M) \in (M)$ for any monomial $M = x_1^{l_1} x_2^{l_2} \cdots x_d^{l_d}$ in $A$. Hence, by \Cref{FrobeniusLiftPower}, it follows that $I^{[\phi]} = (\phi(M_1), \ldots, \phi(M_r)) \subseteq (M_1, \ldots, M_r)$.

\item
The binomial ideal generated by $p^n(M_1- M_2)$, where $M_1$ and $M_2$ are monomials in $p,x_2,\ldots,x_d$ that are not divided by $p$ in $A$, is $\phi$-stable. Indeed, it suffices to show that $p^n(x_2^{pk_1}\cdots x_n^{pk_n} - x_2^{pl_1}\cdots x_n^{pl_n}) \in\left(p^n(x_2^{k_1}\cdots x_n^{k_n} - x_2^{l_1}\cdots x_n^{l_n})\right)$. But we have 
$$
p^n(x_2^{pk_2}\cdots x_n^{pk_n} - x_2^{pl_1}\cdots x_n^{pl_n}) = p^n(x_2^{k_2}\cdots x_n^{k_n} - x_2^{l_2}\cdots x_n^{l_n}) g \in \left(p^n(x_2^{k_1}\cdots x_n^{k_n} - x_2^{l_1}\cdots x_n^{l_n})\right)
$$
for some $g \in W(k)\llbracket x_2,\ldots,x_n  \rrbracket$.
\end{enumerate}
\end{example}

\begin{lemma}
\label{phiInjection}
Let $(A,\delta)$ be a $\delta$-ring with the associated Frobenius lift $\phi$.
Let $I$ be a $\phi$-stable ideal. Suppose that $R := A/I$ is reduced and $\phi$ is faithfully flat. Then the ring homomorphism $\phi_R : R \to R$ induced by $\phi$ is injective.
\end{lemma}

\begin{proof}
Pick an element $\overline{f} \in \Ker \phi_R$ with a lift $f \in A$.
By the assumption, it suffices to show $f^p \in I$.
Since an element $\overline{p\delta(f)}$ lies in $\Ker \phi_R$, an element $\phi_A(p\delta(f))$ is in $I$. By combining it with the faithfully flatness of $\phi : A \to A$, we obtain $p\delta(f) \in I(\phi_*A) \cap A = I$. Hence $f^p+p\delta(f) \in I$ implies $f^p \in I$, as desired.
\end{proof}

\begin{remark}
\begin{enumerate}
    \item In the setting of \Cref{phiInjection}, the class of complete Noetherian local $\delta$-rings whose Frobenius lifts are faithfully flat is exactly the class of regular local rings. This is the result of Ishizuka and Nakazato in \cite{IN24}.
    \item Yoshikawa proves that a complete regular local ring which admits a Frobenius lift must be unramified.
    For details, we refer the reader to \cite[Proposition 2.1]{Yos25}.
\end{enumerate}
\end{remark}

\subsection{Constructions of lim Cohen--Macaulay sequences}\label{SubSecConstlimCM}

The existence of $\delta$-ring structure on Noetherian local rings of mixed characteristic is related to the existence of lim Cohen--Macaulay sequences, which are the notion introduced by Bhatt, Hochster, and Ma in \cite{BHM24}.
See \cite{Ho17} and \cite{Ma23} for interesting applications.

\begin{definition}
\label{LimCMDef}
Let $(R,\fm,k)$ be a Noetherian local ring with $d\coloneqq \dim R$. Then a sequence of finitely generated $R$-modules $\{M_n\}_{n \ge 0}$ is called a \textit{lim Cohen--Macaulay sequence} if $\dim  M_n=d$ for all $n \ge 0$ and for all $i \ge 1$,
$$
\ell_R(H_i(\underline{x};M_n))=\omicron\big(\dim_k(k \otimes_R M_n)\big)
$$
for some system of parameters $\underline{x}\coloneqq x_1,\ldots,x_d$ of $R$. Note that the number $\dim_k(k \otimes_R M_n)$ is the same as the number of minimal generators of $M_n$ over $R$ by Nakayama's lemma.
\end{definition}

Note that we do not assume $\{M_n\}_{n \geq 0}$ in \Cref{LimCMDef} to be a directed (or inverse) system of $R$-modules. The next proposition is a special case of \cite[Proposition 4.12]{BHM24}.

\begin{proposition}
\label{LimCMSeqProp}
Let $(R,\fm,k)$ be a $d$-dimensional Noetherian local ring and let $\{M_n\}_{n \ge 0}$ be a sequence of finitely generated $R$-modules satisfying $d=\dim M_n$ for all $n \ge 0$. Suppose that $x \in \fm$ is a parameter element of $R$ such that $x$ is a regular element of $M_n$ for all $n \ge 0$. Then $\{M_n\}_{n \ge 0}$ is a lim Cohen--Macaulay sequence over $R$ if and only if $\{M_n/xM_n\}_{n \ge 0}$ is a lim Cohen--Macaulay sequence over $R/xR$.
\end{proposition}

\begin{proof}
By assumption, we have $\dim R/xR=\dim M_n/xM_n=d-1$, and we can extend $x \in \fm$ to a full system of parameters $\underline{x}\coloneqq x,x_2,\ldots,x_d$ of $R$. 
It follows from \cite[Corollary 1.6.13]{BH98} that
$$
H_i(\underline{x};M_n) \cong H_i(x_2,\ldots,x_d;M_n/xM_n)
$$
for all $i \ge 0$ and $n \ge 0$. So we have
$$
\ell_R(H_i(\underline{x};M_n))=\ell_{R/xR}(H_i(x_2,\ldots,x_d;M_n/xM_n)).
$$
Since $x \in \fm$, we get $k \otimes_R M_n \cong k \otimes_{R/xR} M_n/xM_n$ and 
$$
\dim_k(k \otimes_R M_n)=\dim_k(k \otimes_{R/xR} M_n/xM_n).
$$
The proof is now obtained.
\end{proof}

We prove the first main theorem in this paper.

\begin{theorem}
\label{weakperfectlimCM}
Let $(R,\fm,k)$ be a complete Noetherian local domain of mixed characteristic with $F$-finite residue field of characteristic $p>0$. 
Suppose that there exists a perfectoid tower $( \{ R_i \}_{i \geq 0}, \{t_i\}_{i \geq 0})$ arising from $(R, I_0)$ for some ideal $I_0 \subseteq R$. 
Then $\{ R_n \}_{n \geq 0} $ is a lim Cohen--Macaulay sequence of algebras.
\end{theorem}

\begin{proof}
We know that each $R_i$ (resp. $R_i^{s.\flat}$) is a finitely generated $R_0$-module (resp. $R_0^{s.\flat}$-module) by \Cref{FinitePerfdTower}.
Fix a generator $f_0$ (resp. $f_0^{s.\flat}$) of $I_0$ (resp. $I_0^{s.\flat}$).
Then $\{ R_n \}_{n \geq 0}$ (resp. $\{ R_n^{s.\flat} \}_{n \geq 0}$) is a lim Cohen--Macaulay sequence if and only if so is $\{ R_n/f_0R_n \}_{n \geq0}$ (resp. $\{ R_n^{s.\flat}/ f_0^{s.\flat} R_n^{s.\flat}\}_{n \geq 0}$) by \Cref{RiTorsion} and \Cref{LimCMSeqProp}.
Since $R_i^{s.\flat}/f_0^{s.\flat} R_i^{s.\flat}$ is isomorphic to  $R_i/f_0 R_i$ for any $i \geq 0$, we conclude that $\{ R_n \}_{n \geq 0}$ is lim Cohen--Macaulay if and only if so is $\{ R_n^{s.\flat} \}_{n \geq 0}$. 
Hence it suffices to show that $\{ R_n^{s.\flat} \}_{n \geq 0}$ is lim Cohen--Macaulay.
%Hence it suffices to show that $(\{ R_i^{s.\flat} \}_{i \geq 0},\{t_i^{s.\flat}\}_{i \geq 0})$ is lim Cohen-Macaulay.
%$(\{ R_i^{s.\flat} \}_{i \geq 0},\{t_i^{s.\flat}\}_{i \geq 0})$ is a perfect tower.
Now, a perfect tower arising from an $F$-finite reduced Noetherian local ring is a lim Cohen--Macaulay sequence by \cite[Theorem 5.4 and Remark 5.5]{BHM24}.
This implies that $\{ R_n^{s.\flat} \}_{n\geq 0}$ is lim Cohen--Macaulay, as desired.
\end{proof}

In the rest of this section, we give two results, which are the existence of a lim Cohen--Macaulay sequence and a big Cohen--Macaulay algebra over a $\delta$-ring.
The first result provides an example of a lim Cohen--Macaulay sequence but not a perfectoid tower.
Indeed, the tower defined by the commutative diagram (\ref{perfectdelta1}) is purely inseparable (\Cref{ConstPurelyInsep}), and the associated sequence is lim Cohen--Macaulay (\Cref{LimCMSeq}), but not a perfectoid tower (\Cref{RemarkPurelyInseplimCM}).

\begin{corollary}
\label{LimCMSeq}
Let $(R,\fm,k)$ be a $p$-torsion-free\footnote{For any $\delta$-ring $(R,\delta)$, if the associated Frobenius lift $\phi$ on $R$ is injective or $R$ is reduced, then $R$ is $p$-torsion-free (see \cite[Lemma 2.28]{BS22}).} Noetherian complete local ring of mixed characteristic $p>0$ with perfect residue field. 
Assume that $R$ admits a $\delta$-ring structure given by a $p$-derivation $\delta:R \to R$ such that $R/pR$ is reduced.
Let $(\{R_i\}_{i \geq 0},\{\tau_i\}_{ i \geq 0})$ be the tower, where each $\tau_i$ is defined by the diagram $(\ref{perfectdelta1})$. 
Then the sequence of $R$-algebras $\{R_n\}_{n \geq 0}$ is a lim Cohen--Macaulay sequence of algebras.
\end{corollary}

\begin{proof}
Under the stated assumptions, $R/pR$ is $F$-finite and $R/pR \hookrightarrow (R/pR)^{1/p^i}$ is a module-finite extension for each $i \geq 0$. 
Since $p$ is a nonzero divisor of $R$ and $(R/pR)^{1/p^i} \cong R_i/pR_i$, the Krull dimension of $R_i$ is $\dim R$.
Fix a system of parameters $\underline{x}\coloneqq p,x_2,\ldots,x_d$ for $R$. 
Then as in $(\ref{perfectdelta3})$, we have a short exact sequence of finitely generated $R$-modules: $0 \to R_n \xrightarrow{\times p} R_n \to (R/pR)_n \to 0$. 
Moreover by \Cref{purelyinseparable}, $\{ (R/pR)_n \}_{n \geq 0}$ is a lim Cohen--Macaulay sequence in view of \cite[Theorem 5.4]{BHM24} or \cite[Theorem 5.1]{Ho17}. It follows from \Cref{LimCMSeqProp} that $\{R_n \}_{n \ge 0}$ is a lim Cohen--Macaulay sequence.
\end{proof}

According to the following proposition, it can be proved that any Noetherian local domain admitting a $\delta$-ring structure has a big Cohen--Macaulay algebra with some good properties. The proof does not depend on Andr\'e's theorem (see \cite{An1} and \cite{An2} for his results).

\begin{proposition}
Let $(R,\fm,k)$ be a Noetherian local domain of mixed characteristic $(0, p)$.
Assume that $R$ admits a $\delta$-ring structure (equivalently, structure of a crystalline prism).
Then there exists a $p$-adically complete $R$-algebra $C$ such that $C$ is an $\fm$-adically complete balanced big Cohen--Macaulay $R$-domain, and $C/pC$ is an absolutely integrally closed domain.
\end{proposition}

\begin{proof}
Let $\phi:R \to R$ be the Frobenius lift attached to $\delta$ as claimed in the assumption. Set $R_\infty$ to be the $p$-adic completion of the colimit $\varinjlim_{\phi}R$. Then $(R_\infty,pR_\infty)$ is a perfect prism and we have $R_\infty \cong W(R_\infty/pR_\infty)$, where $W(-)$ is the functor of $p$-typical Witt vectors. By \cite[Theorem 7.8]{Di07} and \cite[Theorem 17.5.96]{GR23}, there exist an absolutely integrally closed $\mathbb{F}_p$-domain $B$ and a ring map $R_\infty/pR_\infty \to B$ such that $B$ is complete with respect to the adic topology generated by a system of parameters of $R/pR$ and $B$ is a balanced big Cohen--Macaulay $R/pR$-algebra. Since $R_\infty/pR_\infty \to B$ is a ring map of perfect $\mathbb{F}_p$-algebras, it lifts to a ring map $R_\infty \to C\coloneqq W(B)$. Let $p,x_2,\ldots,x_d$ be a system of parameters of $R$. Then $C/pC$ is $(x_2,\ldots,x_d)$-adically complete by construction. So we can apply \cite[Theorem 2.9]{BMP+23} and \cite[Lemma 4.2]{Sh15} to conclude that $C$ has the desired properties.
\end{proof}

\section{Deformation of perfectoid purity via perfectoid towers}
In this section, we discuss the deformation of perfectoid purity in view of the existence of perfectoid towers.
First we recall the definition of perfectoid pure and perfectoid injective singularities.

\begin{definition}[{\cite[Definition 4.1]{BMPSTWW24}}]\label{def:perfectoidpureinj}
    Let $R$ be a Noetherian ring with $p$ contained in its Jacobson radical.
    \begin{enumerate}
    \item
    $R$ is called \textit{perfectoid pure} if there exists a perfectoid $R$-algebra $B$ such that $R \to B$ is pure.
    \item 
    $R$ is called \textit{perfectoid injective} if there is a perfectoid $R$-algebra $B$ such that the local cohomology map $H^i_\fm(R) \to H^i_\fm(B)$ is injective for all $i \ge 0$ and all maximal ideals $\fm$ of $R$.
    \end{enumerate}
\end{definition}

\begin{remark}
    In general, for a Noetherian ring $R$ with $p$ contained in its Jacobson radical, if $R$ is perfectoid pure, then it is perfectoid injective.
    In addition, if $R$ is quasi-Gorenstein, then the converse implication holds.
    For details, see \cite[Lemma 4.4]{BMPSTWW24}.
    They also introduced lim-perfectoid purity and lim-perfectoid injectivity by using the perfectoidization defined by Bhatt and Scholze \cite{BS22}.
    In general, perfectoid pure (resp. perfectoid injective) singularities are lim-perfectoid pure (resp. lim-perfectoid injective), and all four definitions are equivalent if $R$ is locally complete intersection.
\end{remark}

\begin{example}
    By \cite[Lemma 4.6]{BMPSTWW24}, a pure subring of a perfectoid pure ring is also perfectoid pure.
    Using this result, one can show that any local log-regular ring is perfectoid pure.
    Indeed, we may assume that a local log-regular ring is complete with respect to its maximal ideal by \cite[Lemma 4.8]{BMPSTWW24}.
    By the discussion in the proof of \cite[Proposition 4.8]{Ish25}, we know that any complete local log-regular ring is a pure subring of a complete regular local ring.
    Since a regular local ring is perfectoid pure, we obtain that a complete local log-regular ring is perfectoid pure.
\end{example}

The following lemma is useful in proving that rings admitting perfectoid towers are perfectoid pure.
\begin{lemma}\label{PureCompletion}
Let $(R,\fm,k)$ be a Noetherian local ring and let $I$ be an ideal of $R$.
Let $S$ be an $R$-algebra.
Suppose that $R/I^n \to S/I^nS$ is pure for any $n > 0$.
Then $R \to \widehat{S}$ is pure where $\widehat{S}$ is the $I$-adic completion of $S$.
\end{lemma}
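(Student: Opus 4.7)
The plan is to reduce purity of $R \to \widehat{S}$ to injectivity of $M \to M \otimes_R \widehat{S}$ for finitely generated $R$-modules $M$, and then, using the hypothesis modulo $I^n$, to push any element of the kernel into $\bigcap_n I^n M$, which vanishes by Krull's intersection theorem.

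First I would recall that for any ring $R$, a map $R \to T$ is pure if and only if $M \to M \otimes_R T$ is injective for every finitely generated $R$-module $M$: every $R$-module is a filtered union of its finitely generated submodules, and the vanishing of $m \otimes 1$ in $M \otimes_R T$ already occurs at the level of some finitely generated submodule containing $m$. So fix a finitely generated $R$-module $M$ and an element $x \in M$ with $x \otimes 1 = 0$ in $M \otimes_R \widehat{S}$.

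Next, since $R$ is Noetherian the ideal $I$ is finitely generated, and hence the $I$-adic completion satisfies $\widehat{S}/I^n \widehat{S} \cong S/I^n S$ for every $n \geq 1$, a standard fact about completion along a finitely generated ideal (which matters here because $S$ itself is not assumed Noetherian). Composing with the projection $\widehat{S} \twoheadrightarrow S/I^n S$ and using the base-change identification $M \otimes_R (S/I^n S) \cong (M/I^n M) \otimes_{R/I^n} (S/I^n S)$, the map $M \to M \otimes_R \widehat{S}$ factors as
\[
M \twoheadrightarrow M/I^n M \hookrightarrow (M/I^n M) \otimes_{R/I^n} (S/I^n S),
\]
where injectivity of the second arrow is exactly the hypothesis that $R/I^n \to S/I^n S$ is pure, applied to the $R/I^n$-module $M/I^n M$. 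Since $x \otimes 1$ vanishes after further composing with the surjection $M \otimes_R \widehat{S} \to (M/I^n M) \otimes_{R/I^n} (S/I^n S)$, the image of $x$ in $M/I^n M$ must vanish, i.e.\ $x \in I^n M$.

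Finally, as this holds for every $n$ and $I \subseteq \fm$ (the degenerate case $I = R$ makes the hypothesis vacuous while the conclusion fails unless $R = 0$, and so is tacitly excluded), Krull's intersection theorem for the finitely generated $R$-module $M$ over the Noetherian local ring $R$ gives $x \in \bigcap_n I^n M = 0$. I do not anticipate a substantial obstacle; beyond the standard reduction to finitely generated modules and Krull's theorem, the only piece of technical leverage is the identification $\widehat{S}/I^n \widehat{S} \cong S/I^n S$, and the rest is a straightforward diagram chase.
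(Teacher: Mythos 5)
Your proof is correct, but it takes a different route from the paper's. The paper uses the standard local-ring criterion that purity of $R \to \widehat{S}$ may be tested against the injective hull $E(k)$ of the residue field: it reduces to finitely generated submodules $N \subseteq E(k)$, each of which is killed by $\fm^n$ and hence by $I^n$, so that $N \otimes_R \widehat{S}$ is identified with $N \otimes_{R/I^n} S/I^nS$ and a \emph{single} purity hypothesis modulo $I^n$ finishes each $N$ — no intersection argument is needed. You instead test purity against all finitely generated $R$-modules $M$, use the hypothesis for \emph{every} $n$ to push a kernel element into $I^nM$, and invoke Krull's intersection theorem (Artin--Rees plus Nakayama, using $I \subseteq \fm$). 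Your approach is more elementary in that it avoids Matlis/injective-hull machinery entirely, at the cost of the intersection argument; the paper's approach localizes the problem to $\fm$-power-torsion modules, where the statement becomes an immediate consequence of the hypothesis. Both arguments hinge on the identification $\widehat{S}/I^n\widehat{S} \cong S/I^nS$, which you rightly justify by finite generation of $I$ (as $S$ is not assumed Noetherian) and which the paper uses implicitly, and both tacitly assume $I \neq R$ (the paper via "$I^nN=0$ because $R$ is local", you explicitly), so flagging that degenerate case is a fair observation rather than a defect.
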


\begin{proof}
Let $E(k)$ be an injective hull of the residue field $k$.
It suffices to show that $E(k) \to \widehat{S}\otimes_RE(k)$ is injective. Moreover, since $E(k)$ is isomorphic to the colimit of finitely generated sub $R$-modules, it suffices to show that the map $N \to \widehat{S} \otimes_R N$ is pure for every finitely generated submodule $N \subset E(k)$.
Since $E(k)$ is $\fm$-power torsion, there is an integer $n>0$ such that $\fm^n N =0$.
In particular $I^nN =0$ because $R$ is local.
Hence $N \to \widehat{S}\otimes_R N $ can be identified with $N \to S/I^nS \otimes_{R/I^n}N$.
By assumption of the purity of $R/I^n \to S/I^nS$, $N \to \widehat{S}\otimes_R N$ is injective.
\end{proof}

Perfectoid purity is defined by the existence of a pure ring map from a given Noetherian ring to a perfectoid ring.
The target of such a pure map is non-Noetherian because perfectoid rings are usually non-Noetherian.
However, if a ring admits a perfectoid tower, one can reduce to checking the purity of the maps to the finite layers, which may be Noetherian.

\begin{proposition}
\label{PerfdPureTower}
Let $(R,\fm,k)$ be a Noetherian local ring and let $I_0$ be an ideal of $R$.
Let $(\{ R_i \}_{i \geq 0}, \{ t_i \}_{i \geq 0})$ be a perfectoid tower arising from $(R,I_0)$.
Suppose that $t_{0,i} := t_{i-1}\circ \cdots \circ t_0 : R_0 \to R_i$ is pure for any $i \geq 0$.
Then $R$ is perfectoid pure.
\end{proposition}

\begin{proof}
    By assumption, we have a composition $R_0 \to \widehat{R_\infty}$ of a pure ring map $R_0 \to R_\infty$ and the $I_0$-adic completion $R_\infty \to \widehat{R_\infty}$.
    Note that $\widehat{R_\infty}$ is a perfectoid ring by \cref{PropPerfdTower} (4).
    Also, $R_0/I_0^n \to \widehat{R_\infty}/I_0^n\widehat{R_\infty} \cong R_\infty/I_0^n R_\infty$ is pure where $\widehat{R_\infty}$ is the $I_0$-adic completion of $R_\infty$.
    Hence, by \Cref{PureCompletion}, $R_0 \to \widehat{R_\infty}$ is pure.
    This implies that $R$ is perfectoid pure.
\end{proof}

We prove the key lemma to prove the deformation of perfectoid purity in the case that a ring admits a perfectoid tower.
\begin{lemma}
\label{deltasplitv2}
Let $A$ be a Gorenstein ring and let $f:A \hookrightarrow B$ be a module-finite extension where $B$ is an $A$-algebra.
Let $x \in A$ be a non-zero-divisor on $B$.
Suppose that $B/xB$ is a maximal Cohen--Macaulay $A/xA$-module and $\overline{f} : A/xA \to B/xB$ splits as $A/xA$-module.
Then $f$ also splits.
\end{lemma}

\begin{proof}
It suffices to show that $\Ext^{1}_A(C,A)=0$.
Consider a commutative diagram
%\[
%\xymatrix{
%0 \ar[r] & A \ar[r] \ar[d]^{\times x} & B \ar[r] \ar[d]^{\times x} & C \ar[r] \ar[d]^{\times x} & 0 \\
%0 \ar[r] & A \ar[r] & B \ar[r] & C \ar[r] & 0,
%}
%\]
%where $C$ is the cokernel of $f$.
%Set $\overline{A} = A/xA$, $\overline{B}=B/xB$, and $\overline{C}= C/xC$.
%By snake lemma, the above commutative diagram induces the split short exact sequence 
%\begin{equation}\label{SplitSES1}
%\xymatrix{
%0 \ar[r] & \overline{A} \ar[r]^{\overline{f}} & \overline{B} \ar[r] & \overline{C} \ar[r] & 0.
%}
%\end{equation}
%Now, we prove that $\Ext^1_{\overline{A}}(\overline{C},\overline{A}) = 0$.
%To prove this, by taking localization and completion, we may assume that $A$ is a complete local ring.
%The short exact sequence (\ref{SplitSES1}) induces the long exact sequence 
%\[
%\xymatrix{
%0 \ar[r] & \Hom_{\overline{A}} (\overline{C}, \overline{A}) \ar[r] & \Hom_{\overline{A}} (\overline{B}, \overline{A}) \ar[r] & \Hom_{\overline{A}} (\overline{A}, \overline{A}) \ar[r] & \Hom_{\overline{A}} (\overline{C}, \overline{A}) 
%}
%\]

\[
\xymatrix@M=10pt{ 
0 \ar[r]& A \ar[r]^{f} \ar[d]^{\times x} & B \ar[r] \ar[d]^{\times x} & C \ar[r] \ar[d]^{\times x}&0 \\
0 \ar[r] &A \ar[r]^{f} & B \ar[r] &C \ar[r] &0 \\
}
\]
Since $\overline{f}$ splits by assumption, we get a split exact sequence of $A/xA$-modules: $0 \to A/xA \xrightarrow{\overline{f}} B/xB \to C/xC \to 0$. Moreover, $C$ is $x$-torsion-free by the snake lemma. We claim that
\begin{equation}
\label{ExtVan}
\Ext^1_{A/xA}(C/xC,A/xA)=0. 
\end{equation}
To prove $(\ref{ExtVan})$, we may replace $A$ with the completion-localization $\widehat{A_\fm}$ without losing generality. 
Since we have an exact sequence
$$
0 \to \Ext^1_{A/xA}(C/xC,A/xA) \to \Ext^1_{A/xA}(B/xB,A/xA) \to \Ext^1_{A/xA}(A/xA,A/xA)=0,
$$
it is sufficient to show $\Ext^1_{A/xA}(B/xB,A/xA)=0$. Since $A/xA$ is Gorenstein, the canonical module of $A/xA$ is itself and $B/xB$ is a maximal Cohen--Macaulay $A/xA$-module. By local duality theorem \cite[Corollary 3.5.11]{BH98}, we obtain $(\ref{ExtVan})$. By applying a homological lemma of Rees \cite[Lemma 3.1.16]{BH98}, we get $\Ext^2_A(C/xC,A)=0$. The exact sequence: $0 \to C \xrightarrow{\times x} C \to C/xC \to 0$ yields an exact sequence:
$$
\cdots \to \Ext^1_A(C/xC,A) \to \Ext^1_A(C,A) \xrightarrow{\times x} \Ext^1_A(C,A) \to \Ext^2_A(C/xC,A)=0.
$$
Hence we must get $\Ext^1_A(C,A)=x\Ext^1_A(C,A)$. Also, $\Ext_A(C,A)$ is also finitely generated because $A$ is a Noetherian local ring and $C$ is finitely generated.
This implies $\Ext^1_A(C,A)=0$ by Nakayama's lemma, as desired.
\end{proof}

As one of the applications of \Cref{deltasplitv2}, we show that the Frobenius lift on a $\delta$-ring splits.

\begin{corollary}
\label{deltasplit}
Let $(A,\delta)$ be a $p$-torsion-free $\delta$-ring such that $A$ is a $p$-Zariskian Noetherian ring and $A/pA$ is $F$-finite, $F$-split and Gorenstein. Then the $A$-module map $\phi:A \to A$ splits.
\end{corollary}

\begin{proof}
By the same argument as in \Cref{deltasplitv2}, we obtain $\Ext_A^1(C,A)=p\Ext_A^2(C,A)$ where $C$ is the cokernel of the injective Frobenius lift $A \xrightarrow{\phi} A^{(1)}$.\footnote{The injectivity of $\phi$ follows from \Cref{FrobLiftInjpZariskian}} In our setting, Nakayama's lemma applies, and hence we conclude that $\Ext_A^1(C,A)=0$.
\end{proof}

\begin{remark}
The Gorenstein hypothesis seems to be necessary in the hypothesis of \Cref{deltasplit}. Indeed, the positive characteristic analog of the deformation problem for $F$-splitting fails, as demonstrated in \cite{Singh98}.
\end{remark}

We prove Main Theorem \ref{MainTheorem3}.

\begin{theorem}\label{DeformPerfdPurity}
    Let $(R,\fm,k)$ be a Gorenstein local ring whose residue field is $F$-finite.
    Let $(\{ R_i \}_{i \geq 0}, \{ t_i \}_{i \geq 0})$ be a perfectoid tower arising from some pair $(R, I_0)$ where $R$ is $I_0$-torsion-free.
    %Suppose that $t_0 : R_0 \to R_1$ splits.
    Suppose that $\overline{R} =R/I_0R$ is $F$-split. 
    Then the transition map $t_{0, i} : R_0 \to R_i$ splits for every $i \geq 0$. In particular, $R$ is perfectoid pure. 
    %Finally, the conclusion holds if $R$ is a Gorenstein local log-regular ring.
\end{theorem}

\begin{proof}
    The perfectoid purity of $R$ follows from the combination of the first assertion and \Cref{PerfdPureTower}.
    Hence let us show the first assertion.
    Set $\overline{R_i^{s.\flat}} := R_i^{s.\flat}/I_0^{s.\flat} R_i^{s.\flat}$.
    Since $R_i^{s.\flat} $ is isomorphic to $(R_0^{s.\flat})^{1/p^i}$ and $R_0^{s,\flat}$ is also Gorenstein, $R_i^{s.\flat}$ is a maximal Cohen--Macaulay $R_0^{s.\flat}$-module.
    This implies that $\overline{R_i}$ is a maximal Cohen--Macaulay $\overline{R_0}$-module.
    By combining \Cref{deltasplitv2} with the assumption, we obtain that $t_0^{s.\flat} : R_0^{s.\flat} \to R_1^{s.\flat}$ splits, hence $R_0^{s.\flat}$ is $F$-split.
    Also, it follows that $t_{0,i}^{s.\flat} :=t_{i-1}^{s.\flat} \circ \cdots \circ t_0^{s.\flat} : R_0^{s.\flat} \to R_i^{s.\flat}$ also splits.
    By the base change, we obtain that $\overline{R_0} \to \overline{R_i}$ splits.
    %Finally, by \Cref{deltasplitv2} again, we obtain that $t_{0,i} : R_0 \to R_i$ splits. The case when $R$ is a Gorenstein local log-regular ring can be deduced from the fact that $R$ admits a perfectoid tower as proved in \cite{INS23}.
\end{proof}

\section{Constructions of perfectoid towers}
In this section, we provide examples of perfectoid towers arising from quotients by $\phi$-stable ideals.
In \S \ref{SubSecpFreePerfdTower}, we prove \Cref{MainTheorem2} (\Cref{pTorFreePerfdTowers}).
The resulting perfectoid towers agree with those constructed by Ishizuka \cite{I23}, although the proofs are conmpletely different.
In \S \ref{SubSectionDet} (resp. \S \ref{SubSecSecRing}), we give examples of perfectoid towers arising from determinantal rings defined by $2\times m$-matrices (resp. section rings of smooth projective varieties).
In \S \ref{SubSubSectMonomialPerfd}, we discuss a construction of perfectoid towers arising from quotients by squarefree monomial ideals that contain $p$-torsion elements.

First, we define the colimits using the ring homomorphisms induced by Frobenius lifts on $\delta$-rings.

\begin{definition}
\label{Rpdelta0}
Let $(A,\delta)$ be a $\delta$-ring with the associated Frobenius lift $\phi$, let $I$ be a $\phi$-stable ideal, and let $R$ be the quotient ring $A/I$. Then we define the ring $R_i \coloneqq \colim\big\{R\underbrace{\xrightarrow{\phi_R} R \xrightarrow{\phi_R} \cdots \xrightarrow{\phi_R}}_{i\textnormal{\ times}}R\big\}.$
\end{definition}

In the same way as in Definition \ref{pdelta0} (\ref{perfectdelta1}) and (\ref{perfectdelta4}), we define the transition map $(\tau_R)_{i}: R_i \to R_{i+1}$ as the map of the colimit induced by the commutative diagram:
\begin{equation}
\label{perfectdeltaR1}
\vcenter{\xymatrix{
R \ar[d]^{\id} \ar[r]^{\phi_R}& R\ar[d]^{\id} \ar[r]^{\phi_R}& \cdots \ar[r]^{\phi_R} & R \ar[d]^{\id} \ar[rd]^{\phi_R}  \\
R \ar[r]_{\phi_R} & R \ar[r]_{\phi_R}& \cdots \ar[r]_{\phi_R} & R \ar[r]_{\phi_R} & R
}}
\end{equation}

Hence, we obtain the tower
\begin{equation}\label{PerfectionFrob}
    R_0 = R \xrightarrow{(\tau_R)_0} R_1 \xrightarrow{(\tau_R)_1} R_2 \xrightarrow{(\tau_R)_2} \cdots.
\end{equation}

Moreover, we define the ring map $(\phi_R)_i :  R_{i+1} \to R_i$ as the map induced by the following diagram: 
\begin{equation}
\label{perfectdeltaR2}
\vcenter{\xymatrix{
R \ar[r]^{\phi_R} \ar[d]^{\phi_R} & R \ar[r]^{\phi_R} \ar[d]^{\phi_R} & \cdots \ar[r]^{\phi_R}& R \ar[r]^{\phi_R} \ar[d]^{\phi_R} & R \ar[ld]^{\id_R}\\
R \ar[r]^{\phi_R} & R \ar[r]^{\phi_R} & \cdots \ar[r]^{\phi_R} & R. 
}}
\end{equation}

By combining the diagrams (\ref{perfectdelta4}) with (\ref{perfectdeltaR2}), we obtain the commutative square
\begin{equation}
\label{ARcommutative}
\vcenter{\xymatrix{
A_{i+1} \ar[r]^{\phi_i} \ar@{->>}[d] & A_i \ar@{->>}[d] \\
R_{i+1} \ar[r]^{(\phi_R)_i} \ar[r] & R_i.
}}
\end{equation}

Set the composition of the maps
\begin{equation}
\label{CompositeMap12}
\phi_{i,0} \coloneqq \phi_0\circ \phi_1\circ \cdots\circ \phi_{i-1} : A_i \to A_0,
\end{equation}
where $\phi_i$ is defined in \Cref{pdelta0} (\ref{perfectdelta4}).
Then the kernel of the surjection $\pi_i : A_i \twoheadrightarrow R_i$, which is denoted by $I_i$, is described as follows.

\begin{lemma}
\label{IsomRi}
Keep the notation as above. Then $I_i$ is contained in $\phi_{i,0}^{-1}(I) = \pi^{-1}_i(\Ker((\phi_R)_{i,0}))$. 
In particular, $R_i$ is isomorphic to $A_i/\phi_{i,0}^{-1}(I)$ if $\phi_R : R \to R$ is injective.
\end{lemma}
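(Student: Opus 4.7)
The plan is to pass everything through the single global identity
\[
\pi_0\circ \phi_{i,0} = (\phi_R)_{i,0}\circ \pi_i,
\]
which I would obtain by a straightforward induction on $i$ that repeatedly glues the commutative square $(\ref{ARcommutative})$: the base case $i=1$ is exactly that square for the index $0$, and the inductive step stacks the square for index $i$ on top of the one already built. Granted this identity, the advertised equality $\phi_{i,0}^{-1}(I) = \pi_i^{-1}(\Ker((\phi_R)_{i,0}))$ is an elementary element chase using $I = \Ker\pi_0$: for $x\in A_i$, $\phi_{i,0}(x)\in I$ is equivalent to $\pi_0(\phi_{i,0}(x))=0$, which by the global identity is $(\phi_R)_{i,0}(\pi_i(x))=0$, i.e., $\pi_i(x)\in \Ker((\phi_R)_{i,0})$. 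The inclusion $I_i\subseteq \phi_{i,0}^{-1}(I)$ is then automatic, because $I_i = \Ker\pi_i = \pi_i^{-1}(0)$ and $0\in \Ker((\phi_R)_{i,0})$.

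For the ``in particular'' clause my plan is to show that injectivity of $\phi_R$ forces $\Ker((\phi_R)_{i,0}) = (0)$, whereupon the inclusion $I_i\subseteq \phi_{i,0}^{-1}(I)$ becomes an equality and the first isomorphism theorem applied to the surjection $\pi_i$ yields $R_i\cong A_i/I_i = A_i/\phi_{i,0}^{-1}(I)$. To kill the kernel it suffices to prove that each factor $(\phi_R)_j\colon R_{j+1}\to R_j$ in the composition $(\phi_R)_{i,0} = (\phi_R)_0\circ\cdots\circ (\phi_R)_{i-1}$ is injective. The key observation is that by comparing the diagrams $(\ref{perfectdeltaR1})$ and $(\ref{perfectdeltaR2})$ one reads off a relation of the form $(\tau_R)_j\circ (\phi_R)_j = \phi_{R_{j+1}}$, where $\phi_{R_{j+1}}$ is the Frobenius-type endomorphism on $R_{j+1}$ induced by $\phi_R$; since injectivity of $\phi_R$ is inherited by this induced endomorphism through the colimit structure, the right factor $(\phi_R)_j$ in the above composition must itself be injective. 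Composing over $j=0,\ldots,i-1$ then gives the injectivity of $(\phi_R)_{i,0}$.

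The main technical point I expect to verify carefully is this last step: checking that injectivity of $\phi_R$ really passes to each induced map $(\phi_R)_j$ through the colimit description of $R_j$, strictly from the universal property encoded in $(\ref{perfectdeltaR2})$ without relying on an ad hoc description of elements. Once that is in hand, every remaining step of the proof is a diagram chase built on $(\ref{ARcommutative})$ together with the tautology $I_i = \Ker\pi_i$.
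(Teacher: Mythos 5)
Your proposal is correct and follows essentially the same route as the paper: compose the commutative squares $(\ref{ARcommutative})$ to get $\pi_0\circ\phi_{i,0}=(\phi_R)_{i,0}\circ\pi_i$, chase elements to obtain $\phi_{i,0}^{-1}(I)=\pi_i^{-1}(\Ker((\phi_R)_{i,0}))$, and use injectivity of $(\phi_R)_{i,0}$ plus the first isomorphism theorem for the ``in particular'' clause. The only difference is that you spell out, via the factorization $(\tau_R)_j\circ(\phi_R)_j=\phi_{R_{j+1}}$, why injectivity of $\phi_R$ forces injectivity of each $(\phi_R)_j$ and hence of $(\phi_R)_{i,0}$ --- a step the paper simply asserts --- and your justification of it is sound.
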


\begin{proof}
It is obvious that $\Ker \pi_i$ is contained in $\pi^{-1}_i(\Ker((\phi_R)_{i,0}))$.
The composition of the diagram $(\ref{ARcommutative})$ induces the equalities $\phi^{-1}_{i,0}(I) = \pi^{-1}_i(\Ker((\phi_R)_{i,0}))$.
If $\phi_R$ is injective, so is $(\phi_R)_{i,0}$.
Hence the assertions hold.
\end{proof}

\begin{definition}
\label{Ipdelta0}
With the notation in \Cref{IsomRi}, we denote $\phi_{i,0}^{-1}(I)$ as defined in $(\ref{CompositeMap12})$ by $I_i$.
\end{definition}

Here we check that the tower (\ref{PerfectionFrob}) is purely inseparable if $R$ is reduced.

\begin{proposition}\label{ConstPurelyInsep}
Keep the notation as above.
Suppose that $\overline{R}\coloneqq R/pR$ is reduced. 
Then the tower $(\{ R_i \}_{i\geq 0}, \{(\tau_R)_i\}_{i \ge 0} )$ is a purely inseparable tower that arises from $(R, (p))$.
\end{proposition}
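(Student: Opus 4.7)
The plan is to verify the three axioms (a), (b), (c) of \Cref{invqperf} directly for the pair $(R, (p))$.

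The key preliminary step is to make the colimit defining $R_i$ completely explicit. Since $R_i$ is the colimit of the finite diagram $R \xrightarrow{\phi_R} R \xrightarrow{\phi_R} \cdots \xrightarrow{\phi_R} R$ (with $i$ arrows) in the category of commutative rings, and since a finite colimit of such a chain of ring maps is simply its terminal object, we may identify $R_i$ with $R$ in such a way that the structural map from the $j$-th copy of $R$ is $\phi_R^{i-j}$. Tracing the diagram (\ref{perfectdeltaR1}) under this identification shows that the transition map $(\tau_R)_i : R_i \to R_{i+1}$ becomes simply $\phi_R : R \to R$.

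Given this, axiom (a) is immediate, as $R_0 = R$ and $p \in (p)$. For axiom (b), the induced map $\overline{(\tau_R)_i} : R_i/pR_i \to R_{i+1}/pR_{i+1}$ becomes an endomorphism of $\overline{R}$ sending $\overline{a} \mapsto \overline{\phi(a)} = \overline{a^p + p\delta(a)} = \overline{a}^p$; that is, it is precisely the Frobenius endomorphism $F_{\overline{R}}$. Since $\overline{R}$ is reduced by hypothesis, $F_{\overline{R}}$ is injective, yielding (b). For axiom (c), the Frobenius endomorphism on $R_{i+1}/pR_{i+1}$ and the map $\overline{(\tau_R)_i}$ coincide as endomorphisms of $\overline{R}$ --- both equal $F_{\overline{R}}$ --- so the image containment required by (c) holds with equality.

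I do not expect any serious obstacle beyond the bookkeeping involved in identifying the colimits with their terminal copies and keeping the indices of the commutative diagram (\ref{perfectdeltaR1}) straight. After this identification, the entire statement reduces to the standard fact that the Frobenius endomorphism on a reduced $\mathbb{F}_p$-algebra is injective.
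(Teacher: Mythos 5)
Your proof is correct and takes essentially the same approach as the paper: axiom (a) is trivial, (b) follows from reducedness of $\overline{R}$ (injectivity of Frobenius), and (c) follows by observing that the reduction of the transition map is the Frobenius endomorphism. The paper phrases (c) slightly more abstractly via the Frobenius projection $\overline{\phi_i}$ rather than your explicit identification $R_i \cong R$, but the underlying argument is the same.
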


\begin{proof}
The axiom (a) is trivial, and the axiom (b) follows from the assumption that $\overline{R}$ is reduced.
The induced ring map $\overline{\phi_i} : \overline{R_{i+1}} \to \overline{R_i}$ is the $i$-th Frobenius projection of the perfect tower $\{ R_0 \xrightarrow{\overline{(\tau_R)_0}} R_1 \xrightarrow{\overline{(\tau_R)_1}} \cdots\}$.
This implies that axiom (c) holds.
\end{proof}

\begin{remark}\label{RemarkPurelyInseplimCM}
The tower appearing in \Cref{ConstPurelyInsep} may not be a perfectoid tower, because it does not necessarily satisfy the axiom (f).
In order for a purely inseparable tower appearing in \Cref{ConstPurelyInsep} to become a perfectoid tower, it is necessary to suppose that $(A,J)$ is a prism defined by Bhatt-Scholze in \cite{BS22}.
A construction of perfectoid towers arising from prisms is provided by Ishizuka in \cite{I23}.
%We will use a base change of towers of coefficient rings instead of the prismatic theory in \Cref{PerfdTowerBaseChange} and \Cref{ReducedPerfdTower} later.
\end{remark}

\subsection{A construction of perfectoid towers of $R= A/I$ ($p$-torsion-free case)}\label{SubSecpFreePerfdTower}

Keep the notation as above.
We additionally assume the following conditions:
\begin{itemize}
\item
$(R,\fm,k)$ is a Noetherian local ring such that the residue field $k$ is perfect in characteristic $p>0$.

\item
$R$ is a $V$-algebra, where $(V,(p),k)$ is a complete unramified discrete valuation ring in mixed characteristic and its residue field $k$ coincides with the residue field of $R$.
Here, $V$ is isomorphic to the Witt ring $W(k)$ of $k$.

\item
$p^nR \neq 0$ for any $n>0$ (i.e. the characteristic of $R$ is zero).
\end{itemize}

Then it is known that one can construct a perfectoid tower arising from $(V,(p))$.
Indeed, by adjoining the $p$-th power roots of $p$, we obtain a perfectoid tower
\begin{equation}\label{DVRtower}
\xymatrix{
V_0 := V \ar[r]^{t'_0} & V_1 := V[p^{1/p}] \ar[r]^{t'_1} & V_2 := V[p^{1/p^2}] \ar[r]^{t'_2} &\cdots .
}
\end{equation}

\begin{theorem}\label{pTorFreePerfdTowers}
Keep the notation given above.
Suppose that the following conditions hold:
\begin{enumerate}
\item[(i)]
$R$ is $p$-torsion-free (i.e. $p$ is a non-zero-divisor).

%\item[(ii)]
%$\phi_R$ is injective.

\item[(ii)] $\overline{R} := R/pR$ is reduced.
\end{enumerate}

Then the following assertions hold.

\begin{enumerate} 
\item
The tower $(\{S_i\}_{i \geq 0}, \{ s_i \}_{i \geq 0}) := (\{ R_i \otimes_V V_i\}_{i \geq 0}, \{(\tau_R)_i \otimes_V t_i'\}_{i \geq 0})$ is a perfectoid tower arising from $(R,(p))$.

\item
The tilt of $(\{S_i\}_{i \geq 0}, \{ s_i \}_{i \geq 0})$ is isomorphic to the tower
\begin{equation}\label{PerdfTowerTensor}
\xymatrix{
%R_0^{s.\flat} \to R_1^{s.\flat} \otimes_{\overline{V}} V_1^{s.\flat} \to R_2^{s.\flat}\otimes_{\overline{V}} V_2^{s.\flat} \to \cdots.
\overline{R_0}\otimes_{\overline{V}} V^{s.\flat} \ar[r] & \overline{R_1} \otimes_{\overline{V}} V_1^{s.\flat} \ar[r] & \overline{R_2} \otimes_{\overline{V}} V_2^{s.\flat} \ar[r] & \cdots
}
\end{equation}
where $\overline{V}:= V/pV$ is the residue field $k$.
\end{enumerate}
\end{theorem}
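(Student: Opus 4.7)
My plan is to verify the seven axioms (a)--(g) of \Cref{DefPerfectoidTowers} for $(\{S_i\}, \{s_i\})$ one by one, relying on the perfectoid tower structure of $(\{V_i\}, \{t_i'\})$ (direct check, using $\overline{V_i}=k[T_i]/(T_i^{p^i})$ with $T_i=p^{1/p^i}$) and the purely inseparable tower structure of $(\{R_i\}, \{(\tau_R)_i\})$ provided by \Cref{ConstPurelyInsep}. The basic reduction is that since $V_i$ is a free $V$-module and $V$ and $R$ share the residue field $k$, the mod-$p$ layer factors as $\overline{S_i}\cong \overline{R_i}\otimes_k \overline{V_i}$, and the transition map reduces to $\overline{s_i}=\overline{(\tau_R)_i}\otimes_k \overline{t_i'}$. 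Under the canonical identification $R_i=R$ one has $\overline{(\tau_R)_i}=F_{\overline{R}}$, and the reducedness of $\overline{R}$ makes $F_{\overline{R}}$ injective; since the $i$-th Frobenius projection $F_i^R$ on the $R$-side is the unique ring map satisfying $F_{\overline{R}}\circ F_i^R = F_{\overline{R}}$, this forces $F_i^R$ to be the identity of $\overline{R}$.

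With this decomposition in hand, axioms (a)--(c) reduce to the corresponding properties of the two factor towers combined with the exactness of $\otimes_k$; axiom (d) follows because $F_i^S = \mathrm{id}_{\overline{R_i}}\otimes F_i^V$ is surjective, the surjectivity of $F_i^V$ being immediate from the explicit formula $F_i^V(T_{i+1}^\ell)=T_i^\ell$ for $\ell<p^i$ and $0$ otherwise; axiom (e) holds because $S_i$ is a finite local extension of the complete local ring $R$ (the closed fiber $S_i\otimes_R k\cong k[T_i]/(T_i^{p^i})$ is Artinian local, which guarantees that $S_i$ itself is local with $pS_i$ in its maximal ideal); and axiom (f) is settled by taking $I_1 = p^{1/p}S_1$, since then $I_1^p = pS_1 = I_0S_1$, while the explicit description of $F_i^S$ on the $\overline{R_{i+1}}$-basis $\{1\otimes T_{i+1}^\ell\}_{\ell<p^{i+1}}$ yields $\ker(F_i^S)=T_{i+1}^{p^i}\overline{S_{i+1}} = I_1\overline{S_{i+1}}$. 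Axiom (g) is vacuous: since $V_i$ is $V$-flat and $R$ is $p$-torsion free by hypothesis, $S_i = R\otimes_V V_i$ is itself $p$-torsion free, so $(S_i)_{pS_i\textnormal{-tor}}=0$.

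For Part (2), I would identify the tilt by computing $S_i^{s.\flat}=\varprojlim_{m\geq i}\overline{S_m}$ using the decomposition $F_m^S = \mathrm{id}_{\overline{R_m}}\otimes F_m^V$. Because the $\overline{R_m}$-factor of the transition is the identity for every $m$, the inverse limit collapses on the $R$-side and produces $\overline{R_i}\otimes_{\overline{V}} V_i^{s.\flat}$, with the tilted transition maps decomposing compatibly as $\overline{(\tau_R)_i}\otimes (t_i')^{s.\flat}$. The main obstacle I anticipate is justifying this interchange of $\varprojlim$ with $\otimes_k$: when $\overline{R}$ is infinite-dimensional over $k$ the two operations do not literally commute in the category of rings, so one must either interpret $\otimes_{\overline{V}}$ in the statement as a suitable completed tensor product (with respect to the compatible $p^\flat$-adic topologies on both factors), or verify the isomorphism termwise using the free $k$-basis $\{T_j^\ell\}_{\ell<p^j}$ of $\overline{V_j}$ and the stabilization of the $\overline{R}$-coefficients under $F_m^V$ in coherent inverse-limit sequences.
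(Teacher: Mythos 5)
For part (1) your argument is essentially the paper's: both verify the axioms of \Cref{DefPerfectoidTowers} by writing $\overline{S_i}\cong\overline{R_i}\otimes_k\overline{V_i}$ and factoring the transition maps and Frobenius projections through the two factors, with the $R$-factor projection injective (reducedness) and the $V$-factor explicit, $(p^{1/p})$ generating the kernel, and (g) vacuous by $p$-torsion freeness. One cosmetic caveat: under the ``last copy'' identification $R_i=R$ the transition maps are only Frobenius-semilinear over $V=W(k)$, so to make $(\tau_R)_i\otimes_V t_i'$ and the factorization of $F_i^S$ literally well defined one should work in the presentation of \Cref{SettingCor} (transition maps $V$-linear, $\overline{R_i}\cong\overline{R}^{1/p^i}$); there the $R$-side Frobenius projection is a Frobenius-semilinear bijection rather than the identity, but only its bijectivity is used, which is also all the paper uses.

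For part (2) you have correctly located the crux, but you leave it as an unresolved ``either/or'', so the proposal does not yet prove (2); and the issue you flag is genuine. The paper argues via the ladder of short exact sequences $0\to\overline{R_m}\otimes\ker\Phi_0^{(m)}\to\overline{R_m}\otimes V_m^{s.\flat}\to\overline{S_m}\to0$ and kills $\varprojlim$ of the kernels by Krull's intersection theorem; since the transition maps on the middle terms are bijective this identifies $\overline{R_i}\otimes_{\overline V}V_i^{s.\flat}$ with a subring of $S_i^{s.\flat}$, i.e.\ it yields injectivity of the comparison map, while surjectivity (the $\varprojlim^1$ direction) is not addressed. In fact with the ordinary tensor product surjectivity fails whenever $\overline{R_i}$ is infinite-dimensional over $k$: already for $R=W(k)\llbracket x\rrbracket$ (take $I=0$) the slotwise computation you propose gives $S_0^{s.\flat}\cong k\llbracket x,t\rrbracket$, and the element with level-$m$ representatives $z_m=\sum_{\ell<p^m}x^{\ell/p^m}T_m^{\ell}$, i.e.\ $\sum_{\ell\ge0}x^{\ell}t^{\ell}$, lies in the tilt but not in $k\llbracket x\rrbracket\otimes_k k\llbracket t\rrbracket$, whose elements have coefficients spanning a finite-dimensional $k$-space. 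So the isomorphism in (2) has to be read with the completed tensor product $\overline{R_i}\,\widehat{\otimes}_{\overline V}\,V_i^{s.\flat}$, which is exactly the form in which the paper uses it later (\Cref{p-torFreePerfdTower}(2) and \Cref{SectionTilts} are stated with power series rings).

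With that reading, your second alternative is the right way to finish, and it does work: writing an element of $\varprojlim_m\overline{S_m}$ on the $\overline{V_m}$-basis $\{T_m^{\ell}\}_{\ell<p^m}$, the compatibility condition decouples slotwise into $a_{m,\ell}=a_{m+1,\ell}^{p}$, and each slot-$\ell$ system is identified with a single element of $\overline{R_i}$ because the $R$-side Frobenius projections are bijective (injectivity from reducedness, surjectivity since $\overline{R_{m+1}}\cong\overline{R_m}^{1/p}$). Be careful here to normalize each slot's coefficient back to level $i$ along these bijections, rather than reading it off at its first level of appearance; with that normalization the limit is exactly $\overline{R_i}\llbracket t^{1/p^i}\rrbracket\cong\overline{R_i}\,\widehat{\otimes}_{\overline V}V_i^{s.\flat}$, compatibly with the transition maps. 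Carrying this out (or, alternatively, supplementing the paper's ladder argument with a surjectivity statement onto the completed tensor product) is what is missing from your part (2) as written.
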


\begin{proof}
    (1):
    In the following, we denote $R_i/pR_i$ (resp. $V_i /pV_i$) by $\overline{R_i}$ (resp. $\overline{V_i}$). 
    The axiom $(a)$ is trivial. 
    Note that $\overline{(\tau_R)_i}$ is injective by the injectivity of $\overline{\phi_R}$. 
    Since $\overline{R_i}$ and $\overline{V_i}$ are flat $\overline{V}$-algebras,
    \[
    \overline{R_i} \otimes_{\overline{V}} \overline{V_i} \xrightarrow{\overline{(\tau_R)_i} \otimes \textnormal{id}_{\overline{V_i}}} \overline{R_{i+1}} \otimes_{\overline{V}} \overline{V_i} \xrightarrow{\textnormal{id}_{\overline{R_{i+1}}}\otimes_{\overline{V}} \overline{t'_i}  } \overline{R_{i+1}} \otimes_{\overline{V}} \overline{V_{i+1}}
    \]
    is injective and it coincides with $\overline{s_i}$.
    Hence, the axiom (b) holds.
    Since $\overline{(\tau_R)_i}$ and $\overline{t'_i}$ obviously contain the image of each Frobenius endomorphism, it implies that axiom (c) holds.
    Let $F_i : \overline{R_{i+1}} \to \overline{R_i}$ (resp. $F'_i : \overline{V_{i+1}} \to \overline{V_i}$) be the $i$-th Frobenius projection.
    Since both Frobenius projections $F_i$ and $F_i'$ are surjective, the axiom (d) holds.
    Also, the axiom (e) and the axiom (g) are obvious by assumption.
    Hence, it remains to show that the axiom $(f)$ holds.
    We want to prove that the kernel of $\id \otimes_{\overline{V}} F'_i : \overline{R_{i-1}} \otimes_{\overline{V}} \overline{V_i} \to \overline{R_{i-1}} \otimes_{\overline{V}} \overline{V_{i-1}} $ is generated by $1 \otimes_{\overline{V}} \overline{p^{1/p}}$ because $F_i \otimes_{\overline{V}} \id_{\overline{V_i}} : \overline{R_{i}} \otimes_{\overline{V}} \overline{V_i} \to \overline{R_{i-1}} \otimes_{\overline{V}} \overline{V_{i}}$ is injective.
    Since we have the short exact sequence
    \begin{equation*}
    0 \to \overline{R_{i-1}} \otimes_{\overline{V}} (\overline{p^{1/p}}) \to \overline{R_{i-1}}\otimes_{\overline{V}} \overline{V_i} \xrightarrow{ \id_{\overline{R_{i-1}}}\otimes F'_i}  \overline{R_{i-1}}\otimes_{\overline{V}}\overline{V_{i-1}} \to  0,
    \end{equation*}
    the kernel of $\id_{\overline{R_{i-1}}}\otimes_{\overline{V}} F'_i$ is $\overline{R_{i-1}} \otimes_{\overline{V}} (\overline{p^{1/p}}) = (1 \otimes_{\overline{V}} \overline{p^{1/p}})$.

    (2):
    Let $\Phi_0^{(i)}$ be the $0$-th projection map $V_i^{s.\flat} \to \overline{V_i}$.
    Then for any $i\geq 0$, we have the following commutative ladder
    \[
    \xymatrix{
    0 \ar[r] & \Ker(\id_{\overline{R_{i+1}}} \otimes_{\overline{V}} \Phi^{(i+1)}_0 ) \ar[r] \ar[d]^{\textnormal{Res}} & \overline{R_{i+1}} \otimes_{\overline{V}} V^{s.\flat}_{i+1} \ar[rr]^{\id_{\overline{R_{i+1}}} \otimes_{\overline{V}} \Phi^{(i+1)}_0} \ar[d]^{\overline{\phi_i} \otimes_{\overline{V}} (F'_i)^{s.\flat}}  & & \overline{R_{i+1}} \otimes_{\overline{V}} \overline{V_{i+1}} \ar[r] \ar@{->>}[d]^{F_i \otimes_{\overline{V}} F'_i} & 0 \\ 
    0 \ar[r] & \Ker(\id_{\overline{R_{i}}} \otimes_{\overline{V}} \Phi^{(i)}_0 ) \ar[r] & \overline{R_{i}} \otimes_{\overline{V}} V^{s.\flat}_i \ar[rr]^{\id_{\overline{R_{i}}} \otimes_{\overline{V}} \Phi^{(i)}_0 } & & \overline{R_{i}}\otimes_{\overline{V}} \overline{V_i} \ar[r] & 0.
    }
    \]
    where $\textnormal{Res}$ is the restriction of $\overline{\phi_i} \otimes_{\overline{V}} (F'_i)^{s.\flat}$.
    Since $\overline{R_{i+1}}$ is a flat $\overline{V}$-algebra, $\Ker(\id_{\overline{R_{i}}} \otimes_{\overline{V}} \Phi^{(i)}_0)$ is isomorphic to $\overline{R_i} \otimes_{\overline{V}}\Ker(\Phi^{(i)}_0) = \overline{R_i}\otimes_{\overline{V}} ((p^{s.\flat})^{1/p})$.
    Here we have 
    \[
    \vpl \big\{\cdots \xrightarrow{\textnormal{Res}}\Ker(\id_{\overline{R_{i+1}}} \otimes_{\overline{V}} \Phi^{(i+1)}_0 ) \xrightarrow{\textnormal{Res}}\Ker(\id_{\overline{R_{i}}} \otimes_{\overline{V}} \Phi^{(i)}_0 )\big\} \cong \bigcap_{n \geq 0} (\overline{R_i} \otimes_{\overline{V}} (p^{s.\flat})^{p^{n-1}}) = 0,
    \]
    where the last equality follows from Krull's intersection theorem.
    Therefore we obtain the isomorphism $S_i^{s.\flat} \cong \overline{R_i} \otimes_{\overline{V}} V_i^{s.\flat}$, as desired.
\end{proof}

As a corollary, we obtain an explicit description of a perfectoid tower arising from a homomorphic image of the formal power series ring over a complete discrete valuation ring.
Before the proof, we describe the setting of the corollary.

\begin{discussion}\label{SettingCor}
Let $(W(k), (p), k)$ be a complete discrete valuation whose residue field is perfect.
We set $A := W(k)\llbracket x_1,\ldots,x_d \rrbracket$.
Then $A$ is a $\delta$-ring equipped with the $p$-derivation $\delta : A \to A$ such that $\delta(x_i) =0$ for any $i = 1,\ldots, d$.
Let $\phi$ be the Frobenius lift on $A$ defined by $\delta$, let $J=(f_1,\ldots,f_r)$ be a $\phi$-stable ideal of $A$, and we set $R := A/J$.
Then it is easy to prove that $A_i \cong W(k)\llbracket x_1^{1/p^i}, \ldots, x_d^{1/p^i}\rrbracket$ where $A_i$ is the finite colimit defined in \Cref{pdelta0}.
Also, for an element $f= \sum a_{(k_1,\ldots,k_n)}x_1^{k_1}\cdots x_d^{k_d} \in A$, we set $f^{1/p^i}:= \sum a_{(k_1,\ldots,k_n)} x_1^{k_1/p^i}\cdots x_d^{k_d/p^i} \in A_i$.
Then $J_i = \phi_{i,0}^{-1}(J)$ is equal to $(f_1^{1/p^i}, \ldots, f_r^{1/p^i})$ because $\phi_{i,0}$ is injective.
Therefore $R_i$ is isomorphic to $A_i/(f_1^{1/p^i},\ldots,f_r^{1/p^i})$ by \Cref{IsomRi}.
\end{discussion}

\begin{corollary}
\label{p-torFreePerfdTower}
Keep the notation as in \Cref{SettingCor}.
Moreover, set $V_i:=W(k)[p^{1/p^i}]$.
%Leart $R := W(k)\llbracket x_1,\ldots, x_n \rrbracket/J$ be a homomorphic image of a formal power series ring over the Witt ring $W(k)$ of a perfect field $k$.
%Let $\phi$ be the Frobenius lift on $A:=W(k)\llbracket x_1,\ldots,x_d \rrbracket$ defined by the $p$-derivation $\delta$ such that $\delta(x_i) = 0$.
%Set $A_i = W(k)\llbracket x_1^{1/p^i},\ldots, x_n^{1/p^i} \rrbracket$ and $J_i := \phi_{i,0}$
%Fix a $p$-derivation $\delta$ of $W(k)\llbracket x_1,\ldots,x_n \rrbracket$ with the associated Frobenius lift $\phi$.
Suppose that $R$ satisfies the following conditions:
\begin{enumerate}
\item[(a)] $R$ is $p$-torsion-free.
\item[(b)] $R/pR$ is reduced.
\end{enumerate}
Then the following assertions hold:
\begin{enumerate}
    \item $R$ admits a perfectoid tower $(\{ S_i \}_{i \geq 0}, \{ s_i \}_{i \geq 0})$ given in \Cref{pTorFreePerfdTowers} (1).
    Moreover, it is isomorphic to
    \begin{equation}\label{CompletePerfdTower}
    R \to (A_1/J_1)[p^{1/p}] \to (A_2/J_2)[p^{1/p^2}] \to \cdots.
    \end{equation} 
    \item The tilt of the perfectoid tower (\ref{CompletePerfdTower}) is isomorphic to
    \[
    \overline{A}\llbracket t \rrbracket/ \overline{J}(\overline{A}\llbracket t \rrbracket) \to \big(\overline{A_1}\llbracket t \rrbracket/ \overline{J_1}(\overline{A_1}\llbracket t \rrbracket)\big)[t^{1/p}] \to \big(\overline{A_2}\llbracket t \rrbracket/ \overline{J_2}(\overline{A_2}\llbracket t \rrbracket)\big)[t^{1/p^2}] \to \cdots.
    \]
    where $\overline{A_i} = A_i/pA_i \cong k\llbracket x^{1/p^i}_1, \ldots, x_d^{1/p^i} \rrbracket$, $\overline{J_i}$ is the image of the ideal $J_i$ of $A_i$ in $\overline{A_i}$, and $\overline{J}_i(\overline{A_i}\llbracket t \rrbracket)$ is the image of the ideal $\overline{J}_i$ of $\overline{A_i}$ in $\overline{A_i}\llbracket t \rrbracket$.
    \item Suppose that $R/pR$ is $F$-pure and Gorenstein.
    Then $R$ is perfectoid pure.
\end{enumerate}
\end{corollary}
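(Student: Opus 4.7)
The plan is to deduce (1) and (2) from \Cref{pTorFreePerfdTowers} applied to $R=A/J$, after matching the abstract perfectoid tower produced there with the explicit presentation set up in \Cref{SettingCor}, and to deduce (3) from \Cref{deltasplit} combined with \Cref{PerfdPureTower}.

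For (1), I first verify the hypotheses of \Cref{pTorFreePerfdTowers}: $R$ is a Noetherian local $V$-algebra with residue field $k$, $p$-torsion free by assumption, and reduced---indeed, if $x^n=0$ in $R$, then reducedness of $R/pR$ forces $x\in pR$, and iterating together with Krull's intersection theorem forces $x=0$. To identify the layers $R_i$ concretely, observe that the Frobenius lift $\phi$ on $A=W(k)\llbracket x_1,\ldots,x_d\rrbracket$ is faithfully flat: it restricts to the Witt-vector Frobenius on $W(k)$ (an automorphism, as $k$ is perfect) and sends $x_j\mapsto x_j^p$, exhibiting $A$ as free of rank $p^d$ over $\phi(A)$. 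Hence \Cref{phiInjection} together with reducedness of $R$ gives that $\phi_R$ is injective, and then \Cref{IsomRi} identifies $R_i$ with $A_i/J_i$, which under \Cref{SettingCor} is $W(k)\llbracket x_l^{1/p^i}\rrbracket/(f_1^{1/p^i},\ldots,f_r^{1/p^i})$. Since $R_i\otimes_V V_i=(A_i/J_i)\otimes_{W(k)}W(k)[p^{1/p^i}]=(A_i/J_i)[p^{1/p^i}]$, \Cref{pTorFreePerfdTowers}(1) yields the claimed perfectoid tower $(\ref{CompletePerfdTower})$.

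For (2), \Cref{pTorFreePerfdTowers}(2) identifies the tilt with the tower $\{\overline{R_i}\otimes_{\overline V}V_i^{s.\flat}\}_{i\ge 0}$, with the tensor product understood in the sense compatible with the Noetherian local structure used in the proof of that theorem. I will compute the small tilt $V_i^{s.\flat}$ explicitly via the inverse limit $\varprojlim_j V_j/pV_j=\varprojlim_j k[T_j]/(T_j^{p^j})$ along the Frobenius projections $T_{j+1}\mapsto T_j$: the distinguished compatible system $(T_j)_{j\ge i}$ supplies a $p^i$-th root of a uniformizer $t$, yielding $V_i^{s.\flat}\cong k\llbracket t\rrbracket[t^{1/p^i}]$. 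Combining this with $\overline{R_i}=\overline{A_i}/\overline{J_i}$ and $\overline V=k$, the tensor product becomes $\overline{A_i}\llbracket t\rrbracket/\overline{J_i}(\overline{A_i}\llbracket t\rrbracket)[t^{1/p^i}]$, giving the claimed isomorphism of tilt towers.

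For (3), the $\phi$-stability of $J$ together with the $p$-torsion-freeness of $R$ equips $R$ with an induced $\delta$-structure whose Frobenius lift is $\phi_R$, so \Cref{deltasplit} applies to $R$ directly: it is Noetherian local (hence $p$-Zariskian), and $R/pR$ is $F$-finite (as a quotient of the $F$-finite ring $k\llbracket x_1,\ldots,x_d\rrbracket$), $F$-split, and Gorenstein by hypothesis. The lemma produces an $R$-linear splitting of $\phi_R$, and since the canonical map $R\to R_i$ is the iterate $\phi_R^i$ (from the colimit description of $R_i$), composing splittings yields a splitting of $R\to R_i$. Base changing along the free rank-$p^i$ extension $V\to V_i$ preserves this splitting, so $R\to S_i=R_i\otimes_V V_i$ is split---hence pure---as an $R$-module map for every $i$, and \Cref{PerfdPureTower} yields perfectoid purity of $R$. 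The main technical subtlety will lie in (2), specifically in making precise the (completed) tensor product $\overline{R_i}\otimes_{\overline V}V_i^{s.\flat}$ so as to recover the stated power-series presentation on the nose.
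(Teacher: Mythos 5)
Your proposal is correct and follows essentially the same route as the paper: (1) and (2) are exactly the specialization of \Cref{pTorFreePerfdTowers} using the identification $R_i\cong A_i/J_i$ from \Cref{SettingCor} (your extra verification that $\phi_R$ is injective via \Cref{phiInjection} is a fine way to justify invoking \Cref{IsomRi}), together with $V_i^{s.\flat}\cong k\llbracket t\rrbracket[t^{1/p^i}]$ for the tilt, and (3) is the paper's argument: \Cref{deltasplit} makes $R\to R_i$ split (pure), the finite free extension $V\to V_i$ handles the second tensor factor, and \Cref{PerfdPureTower} concludes. The only stylistic difference is that in (3) you phrase everything via splittings (composing the splitting of $R\to R_i$ with the $R$-linear splitting of $R\to R\otimes_V V_i$ coming from freeness) where the paper argues purity directly via faithful flatness of $V\to V_i$; both are equivalent here.
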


\begin{proof}
    $(1)$:
%    By \Cref{phiInjection}, $\phi$ is injective.
    This is the special case of \Cref{pTorFreePerfdTowers} by the assumptions (a) and (b).
    Now, since $R_i \cong A_i/J_i$ by the above discussion, each layer $R_i\otimes_{W(k)} W(k)[p^{1/p^i}]$ of the tower (\ref{PerdfTowerTensor}) is isomorphic to $(A_i/J_i)[p^{1/p^i}]$, as desired.

    $(2)$:
    Note that $\overline{V} = k$ and $V_i^{s.\flat}$ is isomorphic to $k\llbracket t \rrbracket[t^{1/p^i}]$.
    Hence we have the isomorphisms
    \[
    S_i^{s.\flat} \cong k\llbracket x^{1/p^i}_1,\ldots, x^{1/p^i}_d \rrbracket \otimes_k k\llbracket t^{1/p^i} \rrbracket/J_i \otimes_k k\llbracket t^{1/p^i} \rrbracket \cong \big(\overline{A_i}\llbracket t \rrbracket /\overline{J_i} (\overline{A_i}\llbracket t \rrbracket)\big)[t^{1/p^i}].
    \]
    This implies that the assertion (2) holds.
    
    $(3)$:
    To prove this, we show that $(\tau_R)_{0,i}\otimes_V (t')_{0,i}$ is pure. 
    Since $(\tau_R)_{0,i}:= (\tau_R)_i \circ \cdots \circ (\tau_R)_0 : R_0 \to R_i$ is pure by applying \Cref{deltasplit}, we have the injection 
    $M \to M\otimes_R R_i$ for any $R$-module $M$.
    Moreover, since $V \to V_i$ is faithfully flat (namely, pure) where $V = W(k)$ and $V_i := W(k)[p^{1/p^i}]$, the homomorphism $M \to M\otimes_R R_i \otimes_V V_i$ is injective as a $V$-module.
    Hence it is also injective as an $R$-module.
    This implies that $(\tau_R)_{0,i}\otimes_V (t')_{0,i}$ is pure.
    Therefore the assertion follows from \Cref{PerfdPureTower}.
\end{proof}

The following examples are not log-regular and not quotients of monomial ideals.
\begin{example}\label{ExamplePerfdTowerp-torfree}
    \begin{enumerate}
%    \item
%    Set $A:= W(k)\llbracket x_2,\ldots, x_d \rrbracket$ and $R := A/(\mathbf{x}_1,\ldots,\mathbf{x}_n)$ where each $\mathbf{x}_i$ is a squarefree monomial $x_2^{\epsilon_{i2}}\cdots x_d^{\epsilon_{id}}$ for $\epsilon_{ij} \in \{ 0,1\}$.
%    Then $R$ is $p$-torsion-free and $R/pR$ is reduced.
%    Here each $R_i$ is isomorphic to $W(k)\llbracket x_2^{1/p^i}, \ldots,x_d^{1/p^i} \rrbracket/(\mathbf{x}_1^{1/p^i},\ldots,\mathbf{x}_n^{1/p^i})$ where $\mathbf{x}_i^{1/p^i} = x_2^{\epsilon_{i2}/p^i}\cdots x_d^{\epsilon_{id}/p^i}$.
%    Hence $A$ is     
    \item
    Set $A := W(k) \llbracket a,b,x,y,z,w \rrbracket$ and $R:=A/(ab, xy-zw)$.
    Then $R$ is $p$-torsion-free and $R/pR$ is reduced.
    Also, fix the Frobenius lift $\phi$ induced by the $p$-derivation $\delta$ defined as $\delta(a)=\delta(b)=\delta(x)= \delta(y)=\delta(z)=\delta(w)=0$.
    Then, by \Cref{ExamplePhiStables} (2), we already know that $(ab, xy-zw)$ is $\phi$-stable. 
    Hence $R$ has a perfectoid tower.
    Let us construct it by applying \Cref{p-torFreePerfdTower}.
    Since $\phi$ maps each determinants to its $p$-th power, each $A_i$ is isomorphic to $W(k)\llbracket a^{1/p^i}, b^{1/p^i}, x^{1/p^i}, y^{1/p^i}, z^{1/p^i}, w^{1/p^i} \rrbracket$.
    Moreover, one can directly prove that $I_i = (a^{1/p^i}b^{1/p^i}, x^{1/p^i}y^{1/p^i}-z^{1/p^i}w^{1/p^i})$ because $\phi$ is injective.
    Hence $R_i$ is isomorphic to $W(k)\llbracket a^{1/p^i}, b^{1/p^i}, x^{1/p^i}, y^{1/p^i}, z^{1/p^i}, w^{1/p^i} \rrbracket/(a^{1/p^i}b^{1/p^i}, x^{1/p^i}y^{1/p^i}-z^{1/p^i}w^{1/p^i})$.
    Finally, by tensoring $R_i$ with $V_i := W(k)[1/p^i]$, we obtain the perfectoid tower 
    \[
    R_0 = R \to R_1\otimes_V V_1 \to R_2 \otimes_V V_2 \to \cdots
    \]
    whose layer $R_i\otimes_V V_i$ is isomorphic to 
    \[
    W(k)[1/p^i]\llbracket a^{1/p^i}, b^{1/p^i}, x^{1/p^i}, y^{1/p^i}, z^{1/p^i}, w^{1/p^i} \rrbracket/(a^{1/p^i}b^{1/p^i}, x^{1/p^i}y^{1/p^i}-z^{1/p^i}w^{1/p^i}).
    \]
    Moreover, applying \Cref{pTorFreePerfdTowers} (2), the $i$-th layer of their tilt is 
    \[
    k\llbracket t^{1/p^i}, a^{1/p^i}, b^{1/p^i}, x^{1/p^i}, y^{1/p^i}, z^{1/p^i}, w^{1/p^i} \rrbracket/(a^{1/p^i}b^{1/p^i}, x^{1/p^i}y^{1/p^i}-z^{1/p^i}w^{1/p^i}).
    \]
    
    \item
    Set $A := W(k) \llbracket x,y,z \rrbracket$ where the characteristic of $k$ is not equal to 2 and set $R:=A/(xy-z^2,x^2-y^2)$.
    Since $(xy-z^2,x^2-y^2)$ is a $\phi$-stable ideal, the Frobenius lift on $A$ induces the Frobenius lift on $R$.
    Moreover, since we have $R/pR \cong k \llbracket x,y,z \rrbracket/(xy-z^2, x^2-y^2)$, $R/pR$ is reduced.
    It is obvious that $R$ is $p$-torsion-free.
    Hence $R$ admits a perfectoid tower.
    Moreover, since we can compute $I_i = (x^{1/p^i}y^{1/p^i}-z^{2/p^i}, x^{2/p^i} - y^{2/p^i})$, each $S_i$ is isomorphic to $W(k)[p^{1/p^i}]\llbracket x^{1/p^i},y^{1/p^i},z^{1/p^i} \rrbracket/(x^{1/{p^i}}y^{1/p^i}-z^{2/p^i} ,x^{2/p^i}-y^{2/p^i} )$.
    Also, the tilt $R_i^{s.\flat}$ is isomorphic to $k\llbracket x^{1/p^i},y^{1/p^i},z^{1/p^i} \rrbracket/(x^{1/p^i}y^{1/p^i}-z^{2/p^i}, x^{2/p^i} - y^{2/p^i})\otimes_{\overline{V}} V_i^{s.\flat}$, which is isomorphic to $k\llbracket t^{1/p^i},x^{1/p^i},y^{1/p^i},z^{1/p^i} \rrbracket/(x^{1/p^i}y^{1/p^i}-z^{2/p^i}, x^{2/p^i} - y^{2/p^i})$.
    \end{enumerate}
%    Set $A := W(k) \llbracket a,b \rrbracket/(ab)$.
%    Then $A$ has a $p$-derivation $\delta_A : A \to A$ such that $\delta_A(a)=\delta_A(b) = 0$ (This is induced by the $p$-derivation of $W(k)\llbracket a,b \rrbracket$).
%    Hence $A$ is $p$-torsion-free $\delta$-ring.
%    Moreover, we set $R:= A\llbracket x,y,z,w \rrbracket/(xy-zw) \cong (W(k) \llbracket a,b \rrbracket)\llbracket x,y,z,w \rrbracket/(ab, xy-zw)$.
%    Then $R$ is $p$-torsion-free and $R/pR \cong k\llbracket a,b \rrbracket \llbracket x,y,z,w \rrbracket /(ab, xy-zw)$ is reduced.
%    Hence $R$ admits a perfectoid tower arising from $(R,(p))$.
\end{example}

%\begin{remark}
%The ideal $(0)$ of a $\delta$-ring $(A,\delta)$ is also $\phi$-stable.
%Hence we can apply Proposition \ref{PerfdTowerBaseChange} to the class of $\delta$-rings whose Frobenius lift is faithfully flat.
%
%\end{remark}

\subsection{Perfectoid towers of  determinantal rings of $2\times 2$-minors}\label{SubSectionDet}

By applying \Cref{deltasplit} and \Cref{pTorFreePerfdTowers}, we show that certain ladder determinantal rings are perfectoid pure.

Let $k$ be a perfect field, let $W(k)$ be its Cohen ring, let $A = W(k)\llbracket X_1,\ldots, X_d \rrbracket$ be a formal power series ring, and let $M = (M_{ij})$ be an $m\times n$ matrix of squarefree monomials with respect to $X_1,\ldots,X_d$ or $0$.
Then the determinantal ideal of $2\times 2$-minors $I_2(M)$ is generated by monomials or binomials.
We set $A_i = A[X_1^{1/p^i},\ldots,X_d^{1/p^i}]$ and
\[
M^{1/p^i} = 
\left(
\begin{matrix}
M_{11}^{1/p^i} & M_{12}^{1/p^i} & \cdots & M_{1n}^{1/p^i} \\ M_{21}^{1/p^i} & M_{22}^{1/p^i} & \cdots & M_{2n}^{1/p^i} \\
\vdots & \vdots & \ddots & \vdots \\
M_{m1}^{1/p^i} & M_{m2}^{1/p^i} & \cdots & M_{mn}^{1/p^i} 
\end{matrix} 
\right).
\]
We also set $I := I_2(M)$ and $R = A/I$.
Then the ring $R_i$ defined in \Cref{Rpdelta0} is isomorphic to $A_i/I_2(M^{1/p^i})$ by \Cref{IsomRi}.
Now, let $S_i$ and $s_i : S_i \to S_{i+1}$ be as defined in \Cref{pTorFreePerfdTowers} (1). Then one can construct a perfectoid tower as follows.

\begin{corollary}\label{DetPerfd}
    Keep the notation as above.
    Suppose that $R/pR$ is reduced.
    Then the following assertions hold.
    \begin{enumerate}
    \item $(\{ S_i \}_{i \geq 0}, \{ s_i \}_{i \geq 0})$ is a perfectoid tower, which is isomorphic to 
    \[
    A/I_2(M) \to \big(A_1/I_2(M^{1/p})\big)[p^{1/p}] \to \big(A_2/I_2(M^{1/p^2})\big)[p^{1/p^2}] \to \cdots.
    \]
    \item The tilt of the tower is isomorphic to
    \[
    \overline{A}\llbracket t \rrbracket/\overline{I_2(M)}(\overline{A}\llbracket t \rrbracket) \to \big(\overline{A_1}\llbracket t \rrbracket/ \overline{I_2(M^{1/p})}(\overline{A_1}\llbracket t \rrbracket)\big) [t^{1/p}] \to \big(\overline{A_2}\llbracket t \rrbracket/ \overline{I_2(M^{1/p^2})}(\overline{A_2}\llbracket t \rrbracket)\big)[t^{1/p^2}] \to \cdots.
    \]
    \end{enumerate}
\end{corollary}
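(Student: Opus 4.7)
The approach is to reduce the corollary to a direct application of Corollary~\ref{p-torFreePerfdTower} by verifying its hypotheses for $R = A/I_2(M)$, and then to read off the explicit layers using the identifications of Discussion~\ref{SettingCor}.

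First I would verify that $I_2(M)$ is $\phi$-stable for the Frobenius lift $\phi$ on $A$ attached to the $p$-derivation with $\delta(X_\ell) = 0$. Every $2 \times 2$ minor $M_{ij}M_{kl} - M_{il}M_{kj}$ is either a pure monomial in $X_1, \ldots, X_d$ (when at least one of the four entries vanishes) or a binomial $m_1 - m_2$ whose terms are squarefree monomials with coefficients $\pm 1$. By Example~\ref{ExamplePhiStables}(1)--(2) (with $n = 0$ in the binomial case), the principal ideal generated by each such element is $\phi$-stable; Lemma~\ref{IdealOperationsStable}(2) then passes $\phi$-stability to the sum $I_2(M)$.

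Second, I would verify that $R$ is $p$-torsion free. Since the generators of $I_2(M)$ lie in $W(k)[X_1,\ldots,X_d]$ and have coefficients in $\{-1, 0, 1\}$ with no $p$-dependence, the binomial-ideal theory (the Buchberger algorithm preserves $\pm 1$ leading coefficients on binomials of this form) shows that $W(k)[X_1,\ldots,X_d]/I_2(M)$ is free as a $W(k)$-module on the standard monomials modulo the relations, hence $W(k)$-flat. Completing along $(X_1,\ldots,X_d)$ preserves flatness, so $R$ is $W(k)$-flat and in particular $p$-torsion-free. Combined with the assumed reducedness of $R/pR$, conditions (a) and (b) of Corollary~\ref{p-torFreePerfdTower} are satisfied.

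Third, I would identify the layers. By Discussion~\ref{SettingCor} we have $A_i \cong W(k)\llbracket X_1^{1/p^i},\ldots,X_d^{1/p^i}\rrbracket$, and the iterated map $\phi_{i,0} : A_i \to A$ is injective (by Lemma~\ref{FrobLiftInjpZariskian}, which applies because $A$ is a regular Noetherian $p$-Zariskian $\delta$-ring with $A/pA$ reduced) and sends $X_\ell^{1/p^i} \mapsto X_\ell$. Therefore $J_i := \phi_{i,0}^{-1}(I_2(M)) = I_2(M^{1/p^i})$, and Lemma~\ref{IsomRi} gives $R_i \cong A_i/I_2(M^{1/p^i})$. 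Assertions~(1) and~(2) of the present corollary then fall out by substituting $J_i = I_2(M^{1/p^i})$ directly into parts (1) and (2) of Corollary~\ref{p-torFreePerfdTower}; for (2) one also uses that $\overline{V_i}^{s.\flat} \cong k\llbracket t \rrbracket[t^{1/p^i}]$ so that the tilted layer becomes $\overline{A_i}\llbracket t \rrbracket / \overline{I_2(M^{1/p^i})}(\overline{A_i}\llbracket t \rrbracket)[t^{1/p^i}]$.

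The main obstacle is the second step: Corollary~\ref{p-torFreePerfdTower} demands both reducedness of $R/pR$ and $p$-torsion-freeness of $R$, whereas the present hypothesis lists only the former. It is the $\pm 1$-coefficient (pure binomial) structure of the $2\times 2$ minors that upgrades reducedness of the special fibre to $W(k)$-flatness of the total ring; the remaining steps are purely formal translations through the identifications already set up in the paper.
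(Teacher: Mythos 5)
Your proof is correct, and in fact it supplies something the paper does not: the paper states this corollary without proof as an immediate consequence of \Cref{p-torFreePerfdTower}, but that corollary requires \emph{both} that $R$ be $p$-torsion free and that $R/pR$ be reduced, whereas the present statement lists only the latter as a hypothesis. You correctly spot this discrepancy and show $p$-torsion-freeness is automatic from the special structure of $I_2(M)$: since the entries of $M$ are squarefree monomials in $X_1,\ldots,X_d$ (not involving $p$), every $2\times 2$ minor is a monomial or a pure binomial with $\pm 1$ coefficients, and for ideals generated by such elements the Buchberger algorithm over $\mathbb{Z}$ produces a Gröbner basis of the same form; hence $W(k)[X]/I_2(M)$ is $W(k)$-free on the standard monomials, and flatness passes to the $(X_1,\ldots,X_d)$-adic completion $R$. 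The $\phi$-stability verification (via \Cref{ExamplePhiStables} and \Cref{IdealOperationsStable}) and the identification $J_i = I_2(M^{1/p^i})$ via \Cref{IsomRi} and \Cref{SettingCor} are exactly the formal reductions the paper intends but leaves implicit. One small stylistic remark: you could state the Gröbner-basis step slightly more precisely (namely, that $S$-polynomials of pure binomials are pure binomials and that reduction by pure binomials and monomials preserves this class, so the final Gröbner basis has leading coefficients $\pm 1$ independently of the base field or of $\mathbb{Z}$), but the underlying mathematics is sound.
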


Let $k$ be a field and let $X=(X_{ij})$ be the $m \times n$ matrix of indeterminants over $k$.
Then a subset $Y$ of $X$ is called a \textit{ladder} if whenever $X_{ij}, X_{hk} \in Y$ and $i \leq h$, $j\leq k$, then $X_{ih}, X_{hj} \in Y$.
%A ladder $Y$ is called \textit{$t$-disconnected} if there exist two ladders $Y_1, Y_2 \subset Y$ such that $Y_1 \cap Y_2 \neq \emptyset $, $Y_1 \cup Y_2 = Y$, and every $t$-minor of $Y$ is contained in $Y_1$ or in $Y_2$.
%If a ladder $Y$ is not $t$-disconnected, then $Y$ is called \textit{$t$-connected}.
%Also, a $t$-connected ladder $Y$ is called a \textit{wide ladder} if for all insider lower corners $S_i = (a_i,b_i)$ of $Y$ and for all $j = 1,\ldots, t-1$, the set $\{ (x,y) \in Y~|~y=b_i+j-1\}$ contains no point of the horizontal part of the upper border of $Y$.
For a ladder $Y$, let $k[Y]$ be the polynomial ring $k[X_{ij}~|~X_{ij} \in Y ]$.
Fix a positive integer $t$ greater than $1$.
Then we denote $k[Y]/I_t(Y)$ by $R_t(Y)$ where $I_t(Y)$ is the ideal generated by the $t\times t$-minors of $Y$.
Here, the ideal $I_t(Y)$ is called a \textit{ladder determinantal ideal} and the ring $R_t(Y)$ is called a \textit{ladder determinantal ring}.
It is known that ladder determinantal rings are Cohen--Macaulay normal domains \cite{BC03, Nar86}.

In characteristic $p>0$, Conca and Herzog proved that for a wide Gorenstein ladder $Y$, the ladder determinantal ring $R_t(Y) = k[Y]/I_t(Y)$ over a perfect field $k$ is $F$-pure and they conjectured that any ladder determinantal ring is $F$-pure in their paper \cite{CH97}.
Recently, De Stefani--Monta{\~n}o--Betancourt--Seccia--Varbaro proved the conjecture.
See \cite[Corollary 4.3]{DSMNBSV26}.

Also, the ladder determinantal ideal of $2\times 2$ minors $I_2(Y)$ is generated by binomials.
Hence $I_2(Y)$ is $\phi$-stable.
By combining these with our results, one can construct an example of perfectoid pure singularities.

\begin{corollary}
\label{ExamplePerfdPure}
    Let $k$ be a perfect field in characteristic $p>0$ and let $Y$ be a ladder of size $m\times n$.
    Let $\widehat{R_2(Y)} = k\llbracket Y \rrbracket/I_2(Y)$ be the completion of the ladder determinantal ring of $2$-minors over $k$.
    Then the following assertions hold.
    \begin{enumerate}
        \item The lift $\mathcal{R}_2(Y) := W(k)\llbracket Y \rrbracket/I_2(Y)$ admits a perfectoid tower arising from $(\mathcal{R}_2(Y), (p))$.
        \item If $\mathcal{R}_2(Y)$ is Gorenstein, then it is perfectoid pure.
    \end{enumerate}
\end{corollary}

\begin{proof}
    The assertion (1) follows from \Cref{DetPerfd}.
    Note that $\mathcal{R}_2(Y)$ is a Gorenstein local domain and $\mathcal{R}_2(Y)/p\mathcal{R}_2(Y) \cong R_2(Y)$ is $F$-pure by \cite[Corollary 4.3]{DSMNBSV26}.
    Hence we obtain the assertion (2) by \Cref{DeformPerfdPurity}, as desired.
\end{proof}

\begin{example}
    Let $Y$ be the following ladder:
    \[
    \begin{matrix}
        X_{11} & X_{12} & X_{13}  \\
        X_{21} & X_{22} & X_{23} \\
        X_{31} &  X_{32} & 0 
    \end{matrix}
    \]
    This ladder is wide by definition.
    The criterion of Conca \cite[Proposition 2.5]{Con95} implies the associated ladder determinantal ring $R_2(Y)$ is Gorenstein.
    By the criterion of Glassbrenner and Smith \cite[Theorem 7]{GS95}, $R_2(Y)$ is not a complete intersection.
    Hence the lift $\mathcal{R}_2(Y) = W(k)\llbracket Y \rrbracket/I_2(Y)$ is a non-complete intersection Gorenstein perfectoid pure domain by \Cref{ExamplePerfdPure}.
\end{example}

Inspired by the proof of \cite[Theorem 4.24]{BMPSTWW24}, we prove the following result.

\begin{proposition}
Let $(R,\fm)$ be a complete Cohen--Macaulay local domain with perfect residue field $k$ of mixed characteristic with a regular sequence $f_1,\ldots,f_n \in \fm$.
Set $S:=R/(f_1,\ldots,f_n)$. 
Assume that
$$
R_0:=R \to R_1 \to R_2 \to \cdots
$$
is a perfectoid tower arising from $(R,(p))$ such that $R_i$ is a domain for all $i \ge 0$ with $R_\infty^\wedge$ being the $p$-adic completion of $R_\infty:=\varinjlim_{i \ge 0}R_i$. Set $S^{R_\infty^\wedge}_{\perfd}$ to be the perfectoidization of $(R_\infty^\wedge \otimes_R S)^\wedge$ (see \cite[Corollary 7.3 and Theorem 7.4]{BS22} for the structure on the perfectoidization). Then $S^{R_\infty^\wedge}_{\perfd}$ is a big Cohen-Macaulay $R$-algebra in the sense that $H_\fm^i(S^{R_\infty^\wedge}_{\perfd})=0$ for all $i< \dim S$. 
Finally, the conclusion holds when $R$ is a local log-regular ring or a ladder determinantal ring isomorhic to $\mathcal{R}_2(Y)$ as in Corollary \ref{ExamplePerfdPure}.
\end{proposition}

%%%  0308 修正：間違えていたら戻す用
%\begin{proposition}
%Let $(R,\fm)$ be a complete local domain with perfect residue field $k$ of mixed characteristic with a regular sequence $f_1,\ldots,f_n \in \fm$. Set $S:=R/(f_1,\ldots,f_n)$. Assume that
%$$
%R_0:=R \to R_1 \to R_2 \to \cdots
%$$
%is a perfectoid tower such that $R_i \to R_{i+1}$ is a module-finite extension and $R_i$ is a Cohen-Macaulay domain for all $i \ge 0$ with $R_\infty^\wedge$ being the $p$-adic completion of $R_\infty:=\varinjlim_{i \ge 0}R_i$. Set $S^{R_\infty^\wedge}_{\perfd}$ to be the perfectoidization of $(R_\infty^\wedge \otimes_R S)^\wedge$ (see \cite[Corollary 7.3 and Theorem 7.4]{BS22} for the structure on the perfectoidization). Then $S^{R_\infty^\wedge}_{\perfd}$ is a big Cohen-Macaulay $R$-algebra in the sense that $H_\fm^i(S^{R_\infty^\wedge}_{\perfd})=0$ for all $i< \dim S$. 
%Finally, the conclusion holds when $R$ is a local log-regular ring or a ladder determinantal ring isomorhic to $\mathcal{R}_2(Y)$ as in Corollary \ref{ExamplePerfdPure}.
%\end{proposition}

\begin{proof}
Although the same proof given in \cite[Theorem 4.24]{BMPSTWW24} works, we provide the proof for the sake of readers. 
By \cite[Corollary 3.52]{INS23}, $R_\infty^\wedge$ is a perfectoid ring.
We first show that $R_i$ is Cohen--Macaulay.
Fix a generator $p^{s.\flat}$ of $I_0^{s.\flat}$.
Since $p \in R_0$ and $p^{s.\flat} \in R_0^{s.\flat}$ are non-zero-divisors, $R_0^{s.\flat}$ is Cohen--Macaulay.
We note that $R_i^{s.\flat}$ is isomorphic to $(R_0^{s.\flat})^{1/p^i}$, which is finitely generated $R_0^{s.\flat}$-module.
Hence $R_i^{s.\flat}$ is Cohen--Macaulay.
Finally, since $p$ (resp. $p^{s.\flat}$) is also a non-zero-divisor on $R_i$ (resp. $R_i^{s.\flat}$), $R_i$ is Cohen--Macaulay.

Since $R_i \to R_{i+1}$ is a module-finite extension by \Cref{FinitePerfdTower}, we see that $R_\infty$ is a balanced big Cohen-Macaulay $R$-algebra. 
Then $R_\infty^\wedge$ is a balanced big Cohen-Macaulay $R$-algebra by \cite[Theorem 0.1]{Ye18}. 
By \cite[Theorem 2.3.4]{KS20}, we can find a $p$-completely faithfully flat extension $R_\infty^\wedge \to T$ such that $T$ is a perfectoid ring and all elements of $R_\infty^\wedge$ admit $p$-power roots in $T$. Then note that $f_1,\ldots,f_n$ can be extended to a full system of parameters of $R$ as a regular sequence $f_1,\ldots,f_n,f_{n+1},\ldots,f_d$ and
$$
R_\infty^\wedge/(f_1^{1/p^m},\ldots,f_n^{1/p^m})R_\infty^\wedge \to T/(f_1^{1/p^m},\ldots,f_n^{1/p^m})T
$$
is also $p$-completely faithfully flat for all $m>0$. In particular, after taking colimit, it follows that $f_{n+1},\ldots,f_d$ is a regular sequence on $T/(f_1^{1/p^\infty},\ldots,f_n^{1/p^\infty})=\varinjlim_{m>0}T/(f_1^{1/p^m},\ldots,f_n^{1/p^m})$. On the other hand, we claim that
\begin{equation}
\label{p-completeflat}
S^{R_\infty^\wedge}_{\perfd} \cong R_\infty/\big((f_1,\ldots,f_n)R_\infty\big)_\perfd \to T/\big((f_1^{1/p^\infty},\ldots,f_n^{1/p^\infty})T\big)_\perfd
\end{equation}
is $p$-completely faithfully flat. Indeed, let $\big((f_1^{1/p^\infty},\ldots,f_n^{1/p^\infty})T\big)^-$ be the $p$-adic closure of the ideal $(f_1^{1/p^\infty},\ldots,f_n^{1/p^\infty})T$. Then by \cite[Lemma 2.19]{CLMST25}, we have
$$
\big((f_1^{1/p^\infty},\ldots,f_n^{1/p^\infty})T\big)_\perfd \cong \big(\big((f_1,\ldots,f_n)R_\infty\big)_\perfd T\big)_\perfd \cong \big((f_1^{1/p^\infty},\ldots,f_n^{1/p^\infty})T\big)^-,
$$
which shows that $(\ref{p-completeflat})$ is faithfully flat modulo $p$, thus $p$-completely faithfully flat. Since $H_\fm^i(T/(f_1^{1/p^\infty},\ldots,f_n^{1/p^\infty})^-) \cong H_\fm^i(T/(f_1^{1/p^\infty},\ldots,f_n^{1/p^\infty}))=0$ for $i< \dim R$, the $p$-complete faithful flatness of $(\ref{p-completeflat})$ gives the claim. 

Finally, in the case that $R$ is log-regular, the claimed perfectoid tower $R_0:=R \to R_1 \to R_2 \to \cdots$ is constructed in \cite[Proposition 3.58]{INS23} and each $R_i$ is log-regular, thus a Cohen-Macaulay normal domain.
\end{proof}

\subsection{Perfectoid towers arising from a section ring of a smooth projective variety}\label{SubSecSecRing}

Our aim is to construct perfectoid towers by applying the geometric results from \cite{IS24}. First we prove some results on singularities of certain rings of sections in mixed characteristic. Similar treatments also appear in \cite{BMPSTWW24}. Let $k$ be an algebraically closed field of characteristic $p>0$ and let $W(k)$ be the ring of Witt vectors. We start with the following construction.

Let \(X\) be an integral regular scheme such that $X$ is a projective $A$-scheme for a Noetherian ring \(A\) and let us fix an ample \(\mathbb{Q}\)-divisor \(D\) on \(X\). Define the graded ring
\begin{equation*}
R(X, D) \coloneqq \bigoplus_{k=0}^\infty H^0(X,\mathcal{O}_X(kD))
\end{equation*}
which is called the \emph{(generalized) section ring} of \(X\) with respect to \(D\) following \cite[Definition 4.1]{Sh17}. Here, $\mathcal{O}_X(nD)$ is defined as $\mathcal{O}_X(\lfloor nD \rfloor)$, where $\lfloor nD \rfloor$ is an integral divisor which is a round-down of $nD$. Thus, $\mathcal{O}_X(nD)$ is an invertible sheaf because $X$ is a regular scheme. $R(X,D)$ is a Noetherian subring of the function field $K(X)$. In particular, this is a Noetherian integral domain. If \(L = \mathcal{O}_X(D)\), then we often denote \(R(X, D)\) by \(R(X, L)\), namely,
\begin{equation*}
R(X, L) \coloneqq \bigoplus_{k=0}^\infty H^0(X,L^k).
\end{equation*}
This is a finitely generated over \(A\) by \citeSta{0B5T} and there is an isomorphism \(X \cong \Proj(R(X, L))\).

\begin{lemma}
\label{ModpReductionSectionRings}
Let \(X\) be a smooth projective variety over \(k\). Assume that \(X\) admits a projective flat lifting \(\mathcal{X}\) over \(W(k)\).
  Let \(\mathcal{L}\) be an ample line bundle on \(\mathcal{X}\) such that \(L \coloneqq \mathcal{L}|_X\) satisfies \(H^1(X, L^k) = 0\) for all \(k \geq 1\).\footnote{By \citeSta{0892}, \(L\) is an ample line bundle on \(X\). Therefore, after replacing it with sufficiently higher power, we may assume that $H^1(X,L^k)=0$ for all $k \ge 1$ by Serre's ampleness criterion (see, for example, \citeSta{0B5T}).} Then there is an isomorphism
\begin{equation}
\label{ample2}
R(\mathcal{X}, \mathcal{L}) \otimes_{W(k)} k \cong R(X, L)
\end{equation}
of graded rings. Moreover, if \(\mathcal{X}\) has a Frobenius lift \(\widetilde{F}_X\) and \(\mathcal{L}\) satisfies \((\widetilde{F}_X)^*\mathcal{L} \cong \mathcal{L}^p\), then there exists a Frobenius lift \(\phi_{\mathcal{X}, \mathcal{L}}\) on \(R(\mathcal{X}, \mathcal{L})\):
\begin{equation}
\label{amplebundle2}
\phi_{\mathcal{X},\mathcal{L}} \colon R(\mathcal{X},\mathcal{L}) \to R(\mathcal{X},\mathcal{L})
\end{equation}
which sends the $k$-th graded component in the domain to the $pk$-th graded component in the target. Note that $\phi_{\mathcal{X},\mathcal{L}}$ is compatible with the isomorphism (\ref{ample2}) and the Witt-vector Frobenius map on $W(k)$.
\end{lemma}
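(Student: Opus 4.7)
The plan is to handle the two assertions separately, with the key input being the short exact sequence $0 \to \mathcal{L}^k \xrightarrow{p} \mathcal{L}^k \to L^k \to 0$ on $\mathcal{X}$, which is exact because $\mathcal{X}$ is flat over $W(k)$ so that multiplication by $p$ is injective on $\mathcal{L}^k$.

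For the isomorphism (\ref{ample2}), taking the associated long exact sequence in cohomology for each $k \geq 1$ yields an injection $H^1(\mathcal{X},\mathcal{L}^k)/p \hookrightarrow H^1(X, L^k) = 0$. Since $\mathcal{X}$ is proper over $W(k)$, the module $H^1(\mathcal{X},\mathcal{L}^k)$ is finitely generated over the complete local Noetherian ring $W(k)$, so Nakayama's lemma forces $H^1(\mathcal{X},\mathcal{L}^k) = 0$. With this vanishing in hand, the long exact sequence collapses to $0 \to H^0(\mathcal{X},\mathcal{L}^k) \xrightarrow{p} H^0(\mathcal{X},\mathcal{L}^k) \to H^0(X, L^k) \to 0$, giving $H^0(\mathcal{X},\mathcal{L}^k)/p \cong H^0(X, L^k)$ for every $k \geq 1$. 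The case $k = 0$ is handled directly, since $H^0(\mathcal{X}, \mathcal{O}_{\mathcal{X}}) = W(k)$ reduces to $H^0(X, \mathcal{O}_X) = k$. Summing over $k \geq 0$ and noting that the restriction map on global sections is multiplicative upgrades these degreewise isomorphisms to the desired graded ring isomorphism (\ref{ample2}).

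For the Frobenius lift (\ref{amplebundle2}), fix an isomorphism $\alpha \colon (\widetilde{F}_X)^* \mathcal{L} \xrightarrow{\sim} \mathcal{L}^p$; its $k$-th tensor power yields an isomorphism $\alpha_k \colon (\widetilde{F}_X)^* \mathcal{L}^k \xrightarrow{\sim} \mathcal{L}^{pk}$. I define $\phi_{\mathcal{X},\mathcal{L}}$ in degree $k$ as the composition $H^0(\mathcal{X}, \mathcal{L}^k) \xrightarrow{(\widetilde{F}_X)^*} H^0(\mathcal{X}, (\widetilde{F}_X)^*\mathcal{L}^k) \xrightarrow{H^0(\alpha_k)} H^0(\mathcal{X}, \mathcal{L}^{pk})$. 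Functoriality of pullback yields additivity, and the compatibility of $\alpha$ with tensor products gives the identity $\phi(st) = \phi(s)\phi(t)$ between graded components, so $\phi_{\mathcal{X},\mathcal{L}}$ is a ring endomorphism sending degree $k$ to degree $pk$. In degree zero it restricts to the Witt-vector Frobenius on $W(k)$, since $\widetilde{F}_X$ lifts the absolute Frobenius on $\mathcal{X}$ and the Witt-vector Frobenius is the unique lift of the Frobenius on $k$. Reducing $\phi_{\mathcal{X},\mathcal{L}}$ modulo $p$ and using the identification (\ref{ample2}) gives the absolute Frobenius endomorphism of $R(X, L)$, since $\widetilde{F}_X \otimes_{W(k)} k$ is the absolute Frobenius on $X$ and $\alpha$ reduces modulo $p$ to the canonical isomorphism $F_X^* L \cong L^p$.

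The principal delicate point is the vanishing $H^1(\mathcal{X}, \mathcal{L}^k) = 0$, which is essential for surjectivity of the restriction map $H^0(\mathcal{X},\mathcal{L}^k) \twoheadrightarrow H^0(X, L^k)$ and hence for the isomorphism of graded rings. Once this vanishing is secured via the long exact sequence and Nakayama, the remaining work consists of functoriality checks for pullback along $\widetilde{F}_X$ and for the trivialization $\alpha$.
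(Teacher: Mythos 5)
Your proof is correct, and for the first assertion it takes a genuinely different (more self-contained) route than the paper. The paper deduces the isomorphism \eqref{ample2} by citing Illusie's results on cohomological flatness in degree $0$ (the vanishing $H^1(X,L^k)=0$ implies that formation of $H^0(\mathcal{X},\mathcal{L}^k)$ commutes with base change), whereas you re-prove exactly the needed special case by hand: the sequence $0 \to \mathcal{L}^k \xrightarrow{p} \mathcal{L}^k \to L^k \to 0$, the resulting injection $H^1(\mathcal{X},\mathcal{L}^k)/p \hookrightarrow H^1(X,L^k)=0$, finiteness of cohomology over the local ring $W(k)$, and Nakayama. This buys independence from the cohomology-and-base-change machinery at the cost of a slightly longer argument; both treatments must handle degree $0$ separately (since $H^1(X,\mathcal{O}_X)$ need not vanish), and here your claim $H^0(\mathcal{X},\mathcal{O}_{\mathcal{X}})=W(k)$ deserves its one-line justification ($H^0$ is a finite $p$-torsion-free $W(k)$-module whose reduction injects into $\Gamma(X,\mathcal{O}_X)=k$, so Nakayama applies) — this is essentially the paper's argument via the injectivity of $\Gamma(\mathcal{X},\mathcal{O}_{\mathcal{X}})\otimes_{W(k)}k \to \Gamma(X,\mathcal{O}_X)=k$. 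For the Frobenius lift your construction is the same map as the paper's, only phrased via pullback $H^0(\mathcal{X},\mathcal{L}^k)\to H^0(\mathcal{X},(\widetilde{F}_X)^*\mathcal{L}^k)\cong H^0(\mathcal{X},\mathcal{L}^{pk})$ rather than via $\widetilde{F}_X^\sharp$, the projection formula and pushforward; these agree by adjunction. One small point: the reduction of your chosen $\alpha$ modulo $p$ agrees with the canonical isomorphism $F_X^*L\cong L^p$ only up to a global unit $c\in k^{*}$, so to get literally the Frobenius on $R(X,L)$ (rather than a degreewise rescaling of it) you should normalize $\alpha$ by the Teichm\"uller lift of $c^{-1}$; the paper's proof has the same implicit normalization, so this is a shared, easily repaired imprecision rather than a gap.
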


\begin{proof}
The vanishing of \(H^1(X, L^k)\) and \cite[Remark 8.3.11.2 and Corollary 8.3.11]{I05} shows that \(\mathcal{L}^k\) is cohomologically flat in degree \(0\) for all \(k \geq 1\). In particular, we have $H^0(\mathcal{X},\mathcal{L}^k) \otimes_{W(k)} k \cong H^0(X,L^k)$ for all $k \geq 1$. By taking the global sections associated to the short exact sequence $0 \to \mathcal{O}_{\mathcal{X}} \xrightarrow{\times p} \mathcal{O}_{\mathcal{X}} \to \mathcal{O}_X \to 0$, the induced $k$-linear map $\alpha:\Gamma(\mathcal{X},\mathcal{O}_{\mathcal{X}}) \otimes_{W(k)} k \to \Gamma(X,\mathcal{O}_X)$ is injective. Since $X$ is an integral variety over $k=\overline{k}$, we have $\Gamma(X,\mathcal{O}_X)=k$, which gives bijectivity of $\alpha$. The above discussions combine together to conclude the desired isomorphism \eqref{ample2}.

Assume that \(\mathcal{X}\) has a Frobenius lift \(\widetilde{F}_X\) and \(\mathcal{L}\) satisfies \((\widetilde{F}_X)^*\mathcal{L} \cong \mathcal{L}^p\). We see that the structure map $\widetilde{F}_X^\sharp \colon \mathcal{O}_{\mathcal{X}} \to (\widetilde{F}_X)_*\mathcal{O}_{\mathcal{X}}$ induces
\begin{equation}
\label{amplebundle}
\mathcal{L} \to ((\widetilde{F}_X)_*\mathcal{O}_{\mathcal{X}}) \otimes_{\mathcal{O}_{\mathcal{X}}} \mathcal{L} \cong (\widetilde{F}_X)_*(\widetilde{F}_X)^*\mathcal{L} \cong (\widetilde{F}_X)_*\mathcal{L}^p,
  \end{equation}
  where the first isomorphism follows from the projection formula. Since $\widetilde{F}_X$ is an affine morphism by Lemma \cite[Lemma 2.10]{IS24}, we see that $(\ref{amplebundle})$ induces mappings $H^0(\mathcal{X},\mathcal{L}^k) \to H^0(\mathcal{X},\mathcal{L}^{pk})$ for $k \ge 0$, which then yields a graded map of graded rings
  \begin{equation*}
  \phi_{\mathcal{X},\mathcal{L}}:R(\mathcal{X},\mathcal{L}) \to R(\mathcal{X},\mathcal{L}^p) \hookrightarrow R(\mathcal{X},\mathcal{L})
  \end{equation*}
  which sends the $k$-th graded component in the domain to the $pk$-th graded component in the target. This completes the proof.
\end{proof}

First, we show that the ring of sections $R(\mathcal{X},\mathcal{L})$ possesses nice properties. See \cite{ShTa24} and \cite{I23} for relevant discussions.

\begin{proposition}
\label{deltaring}
Let \(X\) be a smooth projective variety \(X\) over an algebraically closed field \(k\) of characteristic \(p > 0\). Assume that \(X\) admits a quasi-canonical lifting \((\mathcal{X}, \widetilde{F}_X)\). Then there is a choice of an ample line bundle $\mathcal{L}$ relative to $\mathcal{X}/\Spec(W(k))$ such that the Noetherian $\mathbb{N}$-graded ring $R(\mathcal{X},\mathcal{L}):=\bigoplus_{k=0}^\infty H^0(\mathcal{X},\mathcal{L}^k)$ satisfies the following properties.
\begin{enumerate}
\item
There is an isomorphism of $\mathbb{N}$-graded rings $R(\mathcal{X},\mathcal{L})/pR(\mathcal{X},\mathcal{L}) \cong R(X,\mathcal{L}|_X)$.
    
\item
$R(\mathcal{X},\mathcal{L})$ is a normal domain and $R(X,\mathcal{L}|_X)$ is an (\(F\)-finite) $F$-split normal domain. 
    
\item
$R(\mathcal{X},\mathcal{L})$ carries a graded ring map $\phi_{\mathcal{X},\mathcal{L}}$ which lifts the Frobenius map on $R(\mathcal{X},\mathcal{L})/pR(\mathcal{X},\mathcal{L})$.
    
\item
Suppose moreover that $\Omega^1_X$ is trivial. Then $R(\mathcal{X},\mathcal{L})$ is perfectoid injective (see \Cref{def:perfectoidpureinj} (2) for the definition of perfectoid injectivity).
  
\item
If $X$ is an ordinary Abelian variety,\footnote{If \(X\) is an ordinary Abelian variety, this construction is already known. For example, \cite[Lemma 4.11]{KT24}.} then $R(X,\mathcal{L}|_X)$ is not Cohen--Macaulay (in particular, it is not a splinter).
\end{enumerate} 
\end{proposition}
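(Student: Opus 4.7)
\emph{Proof plan.} The plan is to assemble the proposition from \Cref{ModpReductionSectionRings} together with standard arguments on section rings, Frobenius splittings, and graded local cohomology. First, using that $\mathcal{X}$ is quasi-canonical (so a Frobenius lift $\widetilde{F}_X$ exists), I would fix an ample line bundle $\mathcal{L}_0$ on $\mathcal{X}$ satisfying $(\widetilde{F}_X)^*\mathcal{L}_0 \cong \mathcal{L}_0^p$, and replace it by a sufficiently large power $\mathcal{L} := \mathcal{L}_0^N$; this preserves the compatibility with $\widetilde{F}_X$ and, via Serre's vanishing together with projective-normality results for smooth projective varieties, guarantees $H^1(X, L^k) = 0$ for all $k \geq 1$ and that $R(\mathcal{X}, \mathcal{L})$ is a normal domain.

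Parts (1) and (3) are then immediate from \Cref{ModpReductionSectionRings}: the vanishing above delivers the graded isomorphism $R(\mathcal{X}, \mathcal{L})/p \cong R(X, L)$, and the compatibility $(\widetilde{F}_X)^*\mathcal{L} \cong \mathcal{L}^p$ produces the graded Frobenius lift $\phi_{\mathcal{X},\mathcal{L}}$. For (2), normality of $R(\mathcal{X}, \mathcal{L})$ follows from the choice of $\mathcal{L}$ together with smoothness of $\mathcal{X}$; $R(X, L)$ is $F$-finite because $k$ is perfect and $R(X, L)$ is finitely generated over $k$; and $R(X, L)$ is $F$-split because a smooth projective $k$-variety admitting a $W_2(k)$-lift of Frobenius is globally $F$-split (a classical consequence of Deligne--Illusie), while the isomorphism $F_X^* L \cong L^p$ permits the sheaf-level splitting to descend, degree by degree on global sections, to an $F$-splitting of the section ring.

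For (5), I would invoke the standard graded description of local cohomology at the irrelevant maximal ideal $\fm$,
\[
H^i_{\fm}(R(X, L)) \cong \bigoplus_{k \in \mathbb{Z}} H^{i-1}(X, L^k) \qquad (i \geq 2).
\]
If $X$ is an ordinary Abelian variety of dimension $g$, then $\dim R(X, L) = g+1$ and $H^{j}(X, \mathcal{O}_X) = \wedge^{j} H^1(X, \mathcal{O}_X) \neq 0$ for every $0 \leq j \leq g$; specializing to $k = 0$, this yields $H^i_{\fm}(R(X, L)) \neq 0$ for $2 \leq i \leq g$, which forces $R(X, L)$ to fail Cohen--Macaulayness, and in particular to not be a splinter.

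Part (4) is the main obstacle. The strategy is to feed $\phi_{\mathcal{X}, \mathcal{L}}$ into the machinery of \S\ref{SubSecpFreePerfdTower}: after passing to the completion at the irrelevant ideal (so that $p$ lies in the Jacobson radical) and adjoining compatible $p$-power roots of $p$, this produces a perfectoid tower arising from $(R(\mathcal{X}, \mathcal{L})^{\wedge}, (p))$ and a candidate perfectoid algebra $B$ given by the $p$-adically completed colimit. The remaining task is to show $H^i_{\fm}(R(\mathcal{X}, \mathcal{L})^{\wedge}) \to H^i_{\fm}(B)$ is injective. Under the hypothesis that $\Omega^1_X$ is trivial, Hodge-theoretic input (Deligne--Illusie combined with analysis of the Frobenius action on $H^j(X, L^k)$) should force the Frobenius-pullback maps $H^j(X, L^k) \to H^j(X, L^{pk})$ to be injective in the relevant range, which via the graded description above yields injectivity at each finite stage of the tower. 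The delicate point will be to propagate this finite-stage injectivity to the $p$-adic limit, which I expect to handle by combining the perfect-tower description of $B/pB$ from \Cref{pTorFreePerfdTowers} with a Mittag--Leffler/colimit argument on local cohomology.
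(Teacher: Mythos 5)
Parts (1)--(3) and (5) of your plan are sound and essentially coincide with the paper's argument: the paper also fixes $\mathcal{L}$ with $(\widetilde{F}_X)^*\mathcal{L}\cong\mathcal{L}^p$ (via \cite[Lemma 2.13]{IS24}), replaces it by a high power so that $H^1(X,L^k)=0$, deduces (1) and (3) from \Cref{ModpReductionSectionRings}, gets $F$-splitting of $R(X,L)$ from global Frobenius-splitting of a variety whose Frobenius lifts mod $p^2$ (the paper cites \cite[Theorem 5.5]{Z17}, which is the same Deligne--Illusie-type input you invoke), and proves (5) by the non-vanishing $H^j(X,\mathcal{O}_X)\neq 0$ together with the graded local cohomology formula $H^{i+1}_{\fm}(R(X,L))\cong\bigoplus_{n\geq 0}H^i(X,L^n)$. (Both your argument and the paper's implicitly need $\dim X\geq 2$ here, since for an elliptic curve the non-vanishing of $H^2_{\fm}$ does not contradict Cohen--Macaulayness; this is a shared, minor caveat.)

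The genuine gap is in (4). Your route (build the perfectoid tower from $\phi_{\mathcal{X},\mathcal{L}}$, take $B$ to be the completed colimit, and prove injectivity of $H^i_{\fm}(R(\mathcal{X},\mathcal{L})^{\wedge})\to H^i_{\fm}(B)$) is not carried out at its only hard point: you assert that ``Hodge-theoretic input should force'' injectivity of the Frobenius-pullback maps at each finite stage and defer the passage to the $p$-adic limit to an unspecified Mittag--Leffler argument. Two concrete problems: first, the finite-stage injectivity you want in characteristic $p$ is essentially $F$-injectivity of $R(X,L)$, which already follows from the $F$-splitting established in (2) and does not use the hypothesis that $\Omega^1_X$ is trivial --- so your sketch never explains where that hypothesis enters, which is a sign the real difficulty is elsewhere; second, the real difficulty is precisely transferring injectivity from the special fibre to the mixed-characteristic ring (equivalently, controlling $H^i_{\fm}(B)$ and the boundary maps coming from multiplication by $p$ and by the elements $p^{1/p^i}$), and this is where quasi-Gorensteinness (triviality of $\omega$) is needed, in the spirit of the deformation argument of \Cref{deltasplit} or of local duality; your plan contains no argument for this step. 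The paper avoids the issue entirely by quoting \cite[Proposition 7.4 and Example 7.7]{BMPSTWW24}, applied using the triviality of $\omega_{\mathcal{X}}$ (deduced from $\Omega^1_X$ trivial): that result packages exactly the ``Frobenius lift plus trivial canonical module implies perfectoid injective'' implication you are trying to reprove. As written, your proof of (4) is a program, not a proof; either cite the BMPSTWW result as the paper does, or supply the missing deformation/duality argument linking the characteristic-$p$ injectivity to the mixed-characteristic local cohomology of $B$.
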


%\begin{remark}
%This construction is related to the theory of perfectoid towers developed in \cite{INS23}. In fact, by using \cite{I23}, the ring \(R(\mathcal{X}, \mathcal{L})\) has a perfectoid tower arising from \((R(\mathcal{X}, \mathcal{L}), (p))\), namely, after fixing a compatible system of \(p\)-power roots \(\{p^{1/p^i}\}_{i \geq 0}\) of \(p\), the sequence of normal domains
%\begin{equation*}
%R(\mathcal{X}, \mathcal{L}) \hookrightarrow R(\mathcal{X}, \mathcal{L})^{1/p}[p^{1/p}] \hookrightarrow R(\mathcal{X}, \mathcal{L})^{1/p^2}[p^{1/p^2}] \hookrightarrow \cdots
%\end{equation*}
%is a perfectoid tower arising from \((R(\mathcal{X}, \mathcal{L}), (p))\) where \(R(\mathcal{X}, \mathcal{L})^{1/p^i} \coloneqq \colim \{R(\mathcal{X}, \mathcal{L}) \xrightarrow{\phi_{\mathcal{X}, \mathcal{L}}} \cdots \xrightarrow{\phi_{\mathcal{X}, \mathcal{L}}} R(\mathcal{X}, \mathcal{L})\}\) is the colimit consisting of \(i+1\) terms.
%This gives a geometric construction and new examples of perfectoid towers. See \cite{I23} for more details on such construction.
%\end{remark}

\begin{proof}
Fix an ample line bundle \(L\) on \(X\). By \cite[Lemma 2.13]{IS24}, there exists a line bundle \(\mathcal{L}\) on \(\mathcal{X}\) with \(L = \mathcal{L}|_X\) such that \((\widetilde{F}_X)^*\mathcal{L} \cong \mathcal{L}^p\). By \cite[Proposition 1.41]{KM98} and \cite[Lemma 2.11]{IS24}, $\mathcal{L}$ is relatively ample on $\mathcal{X}/W(k)$. As stated in \Cref{ModpReductionSectionRings}, we can assume that \(H^1(X, L^k) = 0\) for all \(k \geq 1\) by replacing \(\mathcal{L}\) with sufficiently higher power. We show that this \(\mathcal{L}\) is a required one.

The claimed isomorphism in the assertion $(1)$ was already proved in \Cref{ModpReductionSectionRings}. Moreover, since $X$ has a quasi-canonical lifting over $W(k)$ by \cite[Corollary 4.4]{IS24}, it is globally Frobenius-split in view of \cite[Theorem 5.5]{Z17}, the ring of sections $R(X,L)$ is an $F$-split normal domain which is finitely generated over \(k\). It is a general fact that the ring of sections of a normal integral projective scheme is a normal domain, which gives the assertion $(2)$. Let us check the assertion $(3)$. Namely, it suffices to show that $\phi_{\mathcal{X}, \mathcal{L}}$ is a lift of the Frobenius morphism. The commutative diagram
\[
\xymatrix@M=10pt{ 
\mathcal{X}  &\mathcal{X} \ar[l]^{\widetilde{F}_X} \\
X \ar[u]&\ar[l]^{F_X} X \ar[u]\\
}
\]
induces the commutative diagram of graded rings
\[
\xymatrix@M=10pt{ 
R(\mathcal{X},\mathcal{L}) \ar[r]^{\phi_{\mathcal{X},\mathcal{L}}} \ar[d] & R(\mathcal{X},\mathcal{L}) \ar[d] \\
R(X,L) \ar[r]^{\overline{\phi}_{\mathcal{X},\mathcal{L}}} & R(X,L) \\
}
\]
Here, $\overline{\phi}_{\mathcal{X},\mathcal{L}}$ is the Frobenius endomorphism on $R(X,L)$. Then by $(\ref{ample2})$, we deduce that $\phi_{\mathcal{X},\mathcal{L}} \pmod{p}=\overline{\phi}_{\mathcal{X},\mathcal{L}}$ as claimed. One can check that $R(\mathcal{X},\mathcal{L})$ is perfectoid injective by invoking \cite[Proposition 7.4 and Example 7.7]{BMPSTWW24} since \(\omega_{\mathcal{X}}\) is trivial. This shows (4).
Finally, as for the assertion $(5)$, the non-Cohen--Macaulayness of $R(X,\mathcal{L}|_X)$ is proved in the following manner. By \cite[\S 13 Corollary 2, p. 121]{Mumford}, we get $H^k(X,\mathcal{O}_X) \ne 0$ for all $ 0 \le k \le \dim X$. On the other hand, it follows from \cite[Chapitre III Corollaire 2.1.4]{Gro1} that
$$
H^{i+1}_{\fm}(R(X,L)) \cong \bigoplus_{n=0}^\infty H^i(X,L^{n})
$$
for all $i \ge 1$, where $\fm$ is the graded maximal ideal $R(X,L)_{>0}$. In particular, $H^2_\fm(R(X,L)) \ne 0$. Since we are assuming $\dim X \ge 2$, we get $\dim R(X,L) \ge 3$. This implies that the non-vanishing of $H^2_\fm(R(X,L))$ forces $R(X,L)$ to be non-Cohen--Macaulay.  
\end{proof}

%\begin{theorem}
%\label{deltaring}
%Fix an integer $d \ge 2$ and a $d$-dimensional smooth projective variety $X$ over an algebraically closed field of characteristic $p>0$. Assume that there is a surjective finite \'etale morphism $A \to X$ from an ordinary Abelian variety. Let $(\mathcal{X},\widetilde{F}_X)$ be a lifting of $(X,F_X)$ as in Main Theorem 2 from \cite{IS24}. Then there is a choice of an ample line bundle $\mathcal{L}$ relative to $\mathcal{X}/\Spec(W(k))$ such that the Noetherian graded ring $R(\mathcal{X},\mathcal{L}):=\bigoplus_{k=0}^\infty H^0(\mathcal{X},\mathcal{L}^k)$ satisfies the following properties.
%\begin{enumerate}
%\item
%$R(\mathcal{X},\mathcal{L})/pR(\mathcal{X},\mathcal{L}) \cong R(X,\mathcal{L}|_X)$ and $R(X,\mathcal{L}|_X)$ is a Frobenius-split normal domain. 

%\item
%If one chooses $X$ to be an ordinary Abelian variety, then $R(X,\mathcal{L}|_X)$ is not Cohen-Macaulay.

%\item
%$R(\mathcal{X},\mathcal{L})$ is a normal domain which carries a ring map $\phi_{\mathcal{X},\mathcal{L}}$ which reduces to the $p$-th power map on $R(\mathcal{X},\mathcal{L})/pR(\mathcal{X},\mathcal{L})$.
%\end{enumerate} 
%\end{theorem}

In view of \Cref{deltaring}, we can apply the construction of \Cref{pdelta0}, where
$$
\delta_{\mathcal{X},\mathcal{L}}(x)=\frac{\phi_{\mathcal{X},\mathcal{L}}(x)-x^p}{p}
$$
to the $\delta$-ring $(R(\mathcal{X},\mathcal{L}),\delta_{\mathcal{X},\mathcal{L}})$. So we get the towers of graded rings $(\{R(\mathcal{X},\mathcal{L})_i\}_{i \geq 0},\{\tau_i\}_{i \ge 0})$ and $(\{R(X,L)_i\}_{i \ge 0},\{\overline{\tau}_i\}_{i \ge 0})$. Since $X$ is an integral smooth variety, it follows that $R(X,L)$ is a normal domain. We remark that $\overline{\tau}_i$ is purely inseparable and the Frobenius map on $R(X,L)_{i}$ factors as $R(X,L)_{i} \twoheadrightarrow R(X,L)_{i-1} \xrightarrow{\overline{\tau}_i} R(X,L)_{i}$, where the first surjective map is the $p$-th power map. There is the commutative diagram of towers:
\begin{equation}
\label{perfectcomm3}
\vcenter{\xymatrix{
R(\mathcal{X},\mathcal{L}) \ar[d]^{\pi_0} \ar[r] &R(\mathcal{X},\mathcal{L})_1\ar[d]^{\pi_1} \ar[r] & \cdots \ar[r] & R(\mathcal{X},\mathcal{L})_n \ar[d]^{\pi_n} \ar[r] &\cdots \\
R(X,L) \ar[r] & R(X,L)_1 \ar[r] & \cdots \ar[r] & R(X,L)_n \ar[r] & \cdots \\
}}
\end{equation}

Fix a compatible sequence $\{p^{1/p^i}\}_{i \geq 0}$ of $p$-power roots of $p$ in the algebraic closure of $W(k)[1/p]$.
Let $V_i\coloneqq W(k)[p^{1/p^i}]$ which is a complete ramified discrete valuation ring. We define
\begin{equation}
\label{perfectoidtower}
R_i\coloneqq R(\mathcal{X},\mathcal{L})_i^{\wedge} \otimes_{W(k)}V_i \cong R(\mathcal{X},\mathcal{L})_i^{\wedge}[p^{1/p^i}],
\end{equation}
where the completion is taken along the graded maximal ideal $(p)+R^+(\mathcal{X},\mathcal{L})_{i}$, where $R^+(\mathcal{X},\mathcal{L})_{i}$ is the strictly positive part of $R(\mathcal{X},\mathcal{L})_{i}$. This completed ring $R(\mathcal{X}, \mathcal{L})$ is also the same as the completion of the $R(\mathcal{X},\mathcal{L})$-module $R(\mathcal{X},\mathcal{L})_i$ in the $(p)+R^+(\mathcal{X},\mathcal{L})$-adic topology, as can be checked after the reduction modulo $p$. 
Let us prove some ring-theoretic properties of $R(\mathcal{X}, \mathcal{L})$.

\begin{lemma}
\label{deriveddelta2}
Let $A$ be a Noetherian ring with an ideal $I \subseteq A$ and let $\widehat{A}$ be the $I$-adic completion of $A$. Let $\{M_0 \to M_1 \to \cdots \to M_n\}$ be a finite directed system of $A$-linear maps of finitely generated $A$-modules. Let $\{\widehat{M_0} \to \widehat{M_1} \to \cdots \to \widehat{M_n}\}$ be the system induced by the $I$-adic completion. Then there is an isomorphism of $\widehat{A}$-modules:
$$
\widehat{\varinjlim_{i}M_i} \cong \varinjlim_{i}\widehat{M_i}.
$$
In particular, for each $i \ge 0$, there is a canonical ring isomorphism $R(\mathcal{X}, \mathcal{L})_i^{\wedge} \cong \widehat{R(\mathcal{X}, \mathcal{L})}_i$.
\end{lemma}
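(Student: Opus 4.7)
The plan is to exploit the observation that the index set $\{0 < 1 < \cdots < n\}$ is a finite totally ordered poset with a maximum element, so that any diagram indexed by it has colimit canonically isomorphic to the value at the maximum. First I would verify that $\varinjlim_i M_i \cong M_n$ via the terminal cocone (whose components are the composed transition maps $M_j \to M_{j+1} \to \cdots \to M_n$), and then apply the $I$-adic completion functor to deduce $\widehat{\varinjlim_i M_i} \cong \widehat{M_n}$. Next I would apply the same finite-colimit reasoning to the completed chain $\widehat{M_0} \to \cdots \to \widehat{M_n}$ in $\widehat{A}$-modules, obtaining $\varinjlim_i \widehat{M_i} \cong \widehat{M_n}$. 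The natural comparison morphism between the two sides, induced by the universal property of colimits, restricts on the terminal object $\widehat{M_n}$ to the identity, which gives the desired isomorphism. The Noetherian and finite-generation hypotheses are harmless here and will become useful in the application for identifying the completions as topological rings.

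For the \emph{in particular} statement, I would apply the general lemma to the diagram from Definition \ref{pdelta0}, taking $M_j := R(\mathcal{X}, \mathcal{L})$ for each $j = 0, \ldots, i$ (trivially a finitely generated module over itself, and $R(\mathcal{X}, \mathcal{L})$ is Noetherian as a finitely generated $W(k)$-algebra) with transition maps given by the Frobenius lift $\phi_{\mathcal{X}, \mathcal{L}}$. Completion is taken with respect to the graded maximal ideal $(p) + R^+(\mathcal{X}, \mathcal{L})$, which is stable under $\phi_{\mathcal{X}, \mathcal{L}}$, so the completion $\widehat{\phi_{\mathcal{X}, \mathcal{L}}}$ is well-defined and the completed tower $\widehat{R(\mathcal{X}, \mathcal{L})} \xrightarrow{\widehat{\phi}} \widehat{R(\mathcal{X}, \mathcal{L})} \to \cdots$ is the natural one defining $\widehat{R(\mathcal{X}, \mathcal{L})}_i$. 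The lemma then produces a canonical isomorphism of $\widehat{R(\mathcal{X}, \mathcal{L})}$-modules, and this upgrades to a ring isomorphism because both constructions (the finite colimit along $\phi$ and the $I$-adic completion) are functorial for ring maps and hence respect the multiplicative structure inherited from each copy of $R(\mathcal{X}, \mathcal{L})$ in the cocone.

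The main obstacle is essentially notational rather than substantive: one must unwind Definition \ref{pdelta0} to recognize that the finite colimit collapses onto the last term of the chain, carrying a twisted $R(\mathcal{X}, \mathcal{L})$-algebra structure via $\phi_{\mathcal{X}, \mathcal{L}}^i$, and then check that this twisted structure behaves compatibly under $I$-adic completion so that \emph{colimit-then-complete} and \emph{complete-then-colimit} yield canonically identified descriptions of the same object. Once these identifications are in place, the isomorphism is essentially the tautology that both sides equal $\widehat{M_n}$ with $M_n = R(\mathcal{X}, \mathcal{L})$.
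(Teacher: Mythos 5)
Your argument is correct, and it takes a genuinely different route from the paper. The paper proves the lemma by writing the finite colimit as a quotient $(\bigoplus_{i=0}^{n} M_i)/L$ of a finitely generated module and then invoking the Noetherian machinery: completion commutes with finite direct sums and, by Artin--Rees/exactness of $I$-adic completion on finitely generated modules, with the quotient by $L$, which identifies $\widehat{L}$ with $\widehat{A}L$. You instead observe that the index poset $\{0<1<\cdots<n\}$ (indeed any finite directed poset) has a greatest element, so both $\varinjlim_i M_i$ and $\varinjlim_i \widehat{M_i}$ collapse canonically onto the top term, and the comparison map is the identity on $\widehat{M_n}$. Your proof is shorter, needs neither the Noetherian nor the finite-generation hypotheses for the abstract statement, and makes transparent why the ``in particular'' clause is essentially a tautology; the paper's argument, by contrast, is presentation-based and would survive verbatim for finite colimit diagrams that are not chains, and its finiteness hypotheses are exactly what the authors use in the application (they justify the ``in particular'' by noting that $\phi_{\mathcal{X},\mathcal{L}}$ is module-finite, so each twisted copy of $R(\mathcal{X},\mathcal{L})$ is a finitely generated module over the $0$-th copy).

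Two small caveats on your treatment of the application, neither fatal. First, your phrase ``trivially a finitely generated module over itself'' is not quite the relevant finiteness: for the chain to consist of $A$-linear maps with $A=R(\mathcal{X},\mathcal{L})$ acting through the $0$-th copy, the $j$-th copy carries the $\phi^j$-twisted module structure, and its finite generation over $A$ is the module-finiteness of $\phi_{\mathcal{X},\mathcal{L}}$ (this is the point the paper records); with your proof of the lemma this hypothesis is not actually needed, so the slip is harmless but worth fixing. Second, the identification of ``complete-then-colimit'' with ``colimit-then-complete'' for the rings requires matching topologies: the $I$-adic completion of the twisted module is the $\phi^j(I)R$-adic completion of $R(\mathcal{X},\mathcal{L})$, and one should note that $\phi^j(I)R$ and $I=(p)+R^+(\mathcal{X},\mathcal{L})$ define the same topology (e.g.\ $\phi(p)=p$ and $\phi(x)\equiv x^p \bmod p$ for homogeneous $x$, so $\phi(I)R \supseteq I^N$ for some $N$); the paper disposes of this in the sentence preceding the lemma (``as can be checked after the reduction modulo $p$''), and you flag it as the ``compatibility of the twisted structure with completion,'' which is the right point, just stated without the one-line verification.
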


\begin{proof}
The finite colimit $\varinjlim_{i}M_i$ is calculated as the quotient $(\bigoplus_{i=0}^nM_i)/L$, where $L$ is an $A$-submodule of $\bigoplus_{i=0}^nM_i$. In particular, $\varinjlim_{i}M_i$ is a finitely generated $A$-module. Then
$$
\widehat{\varinjlim_{i}M_i} \cong \widehat{(\bigoplus_{i=0}^nM_i)/L} \cong (\widehat{\bigoplus_{i=0}^nM_i})/\widehat{L} \cong 
(\bigoplus_{i=0}^n\widehat{M_i})/\widehat{A}L \cong \varinjlim_{i}\widehat{M_i},
$$
where one notices that $\widehat{A}L$ is the $I$-adic completed $\widehat{A}$-module $\widehat{L}$. To deduce that $R(\mathcal{X}, \mathcal{L})_i^{\wedge}$ is isomorphic to $ \widehat{R(\mathcal{X}, \mathcal{L})}_i$, it suffices to note that $\delta_{\mathcal{X},\mathcal{L}}:R(\mathcal{X},\mathcal{L}) \to R(\mathcal{X},\mathcal{L})$ is module-finite and we are done.
\end{proof}

\begin{lemma}
\label{RingProperties}
$R(\mathcal{X},\mathcal{L})_i^{\wedge}$ is a $(d+2)$-dimensional complete Noetherian local normal domain for each $i \geq 0$, in particular, flat over $W(k)$. Furthermore, each $R_i = R(\mathcal{X},\mathcal{L})_i^{\wedge} \otimes_{W(k)} V_i$ is also a complete Noetherian normal local domain with residue field $k$.
\end{lemma}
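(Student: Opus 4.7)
The plan is to reduce the lemma to standard properties of the section ring $R := R(\mathcal{X}, \mathcal{L})$ together with a base-change analysis along $V_i/W(k)$. I proceed in three steps: (i) establish the properties of $R$ and of the tower layer $R(\mathcal{X}, \mathcal{L})_i$; (ii) transfer them to the completion $R(\mathcal{X}, \mathcal{L})_i^{\wedge}$ at the graded maximal ideal $\mathfrak{m}_i = (p) + R(\mathcal{X}, \mathcal{L})_i^{+}$; (iii) base-change along $V_i/W(k)$ to obtain $R_i$.

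For Step (i), by \citeSta{0B5T} and the ampleness of $\mathcal{L}$, $R$ is a finitely generated $W(k)$-algebra with $\Proj R \cong \mathcal{X}$, hence Noetherian of Krull dimension $\dim \mathcal{X} + 1 = d+2$. The vanishings $H^{1}(X, L^{k}) = 0$ from \Cref{ModpReductionSectionRings}, combined with cohomology and base change, give $W(k)$-flatness of each graded piece and the identification $R_0 = W(k)$. Smoothness and integrality of $\mathcal{X}$ make $R$ a normal integral domain, as is standard for section rings of ample line bundles on normal integral projective schemes. By \Cref{deriveddelta2}, $R(\mathcal{X}, \mathcal{L})_i^{\wedge}$ is the completion of the finite colimit, and $R(\mathcal{X}, \mathcal{L})_i$ itself inherits these properties from $R$ (either literally, if one reads the colimit categorically, or as a module-finite graded extension of $R$ obtained by iterating $\phi_{\mathcal{X},\mathcal{L}}$).

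For Step (ii), completion at $\mathfrak{m}_i$ preserves Noetherianness, dimension $d+2$, and $W(k)$-flatness, and gives residue field $R_{i,0}/pR_{i,0} = k$. Since $R(\mathcal{X}, \mathcal{L})_i$ is excellent (being finitely generated over the excellent ring $W(k)$), the completion of this normal ring is still normal. A complete Noetherian local normal ring is automatically an integral domain, since a nontrivial idempotent decomposition would contradict locality. This yields the first assertion.

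For Step (iii), write $R_i = R(\mathcal{X}, \mathcal{L})_i^{\wedge} \otimes_{W(k)} V_i \cong R(\mathcal{X}, \mathcal{L})_i^{\wedge}[\pi]/(\pi^{p^i} - p)$ with $\pi = p^{1/p^i}$. This is a finite free extension, hence Noetherian, complete, and local with residue field $V_i/pV_i = k$. An Eisenstein-type argument against the prime $p$—using that $R(\mathcal{X}, \mathcal{L})_i^{\wedge}/p$ is a domain and that $R(\mathcal{X}, \mathcal{L})_i^{\wedge}$ is $p$-torsion free—shows $\pi^{p^i} - p$ is irreducible over $R(\mathcal{X}, \mathcal{L})_i^{\wedge}$, so $R_i$ is a domain. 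Normality follows from Serre's criterion $(R_1)+(S_2)$: the finite flat map $R(\mathcal{X}, \mathcal{L})_i^{\wedge} \to R_i$ has zero-dimensional fibers, so $(S_2)$ transfers; for $(R_1)$, the unique height-one prime containing $p$ is $(\pi)$, with $R_i/(\pi) \cong R(\mathcal{X}, \mathcal{L})_i^{\wedge}/p$ a domain, making $R_{i,(\pi)}$ a DVR, while every other height-one prime pulls back from $R_i[\pi^{-1}] = R(\mathcal{X}, \mathcal{L})_i^{\wedge}[p^{-1}][\pi]$, which is étale over the normal ring $R(\mathcal{X}, \mathcal{L})_i^{\wedge}[p^{-1}]$ and hence normal. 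The main obstacle is precisely this $(R_1)$ check at the ramified prime $(\pi)$: the $p^i$-fold ramification from adjoining $p^{1/p^i}$ could in principle destroy regularity in codimension one, and the saving input is that $R(\mathcal{X}, \mathcal{L})_i^{\wedge}/p$ remains an integral domain, which forces $R_{i,(\pi)}$ to be a DVR.
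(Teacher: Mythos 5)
Your proposal is correct, but it runs on different engines than the paper at the two key points. For normality of $R(\mathcal{X},\mathcal{L})_i^{\wedge}$, you invoke excellence of the finitely generated $W(k)$-algebra $R(\mathcal{X},\mathcal{L})_i$ (so that completion of the normal local ring at the graded maximal ideal stays normal), whereas the paper never completes the mixed-characteristic normal ring directly: it proves flatness over $W(k)$ via the local criterion (\cite[Theorem 8.4]{Mat86}), identifies $\widehat{R(\mathcal{X},\mathcal{L})}/p\widehat{R(\mathcal{X},\mathcal{L})}\cong \widehat{R(X,L)}$, and then deforms normality along the regular element $p$ from the characteristic-$p$ fiber, citing \cite{Mu22} and \cite{Ch82}; your route is shorter and self-contained, the paper's route makes the ``mod $p$ reduction controls the total ring'' philosophy explicit and is what the rest of \S 4.3 keeps using. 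For the ramified base change $R_i$, you verify Serre's criterion $(R_1)+(S_2)$ by hand (étale locus after inverting $p$, DVR at the unique height-one prime $(\pi)$ using that $R_i/(\pi)\cong \widehat{R(\mathcal{X},\mathcal{L})}_i/p$ is a domain), while the paper packages exactly this situation into \Cref{normalcriterion} ($A$ a domain, $x$ regular, $A/xA$ reduced, $A[1/x]$ integrally closed $\Rightarrow$ $A$ integrally closed) applied with $x=p^{1/p^i}$; the two arguments use the same inputs, and in fact your explicit check that $R_i[1/p]$ is étale over the normal ring $\widehat{R(\mathcal{X},\mathcal{L})}_i[1/p]$ supplies the hypothesis ``$A[1/x]$ integrally closed'' that the paper leaves implicit, and your Eisenstein argument justifies the domain property of $R_i$, which the paper asserts rather quickly from finiteness. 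Two small points to tighten: irreducibility of $\pi^{p^i}-p$ over $\widehat{R(\mathcal{X},\mathcal{L})}_i$ alone does not make the quotient a domain — you need irreducibility over the fraction field, which follows from Eisenstein's criterion over the DVR $(\widehat{R(\mathcal{X},\mathcal{L})}_i)_{(p)}$ together with normality (Gauss) and then $p$-torsion-freeness to embed $R_i$ into $\Frac(\widehat{R(\mathcal{X},\mathcal{L})}_i)[T]/(T^{p^i}-p)$; and the residue field of $V_i$ is $V_i/p^{1/p^i}V_i=k$, not $V_i/pV_i$.
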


\begin{proof}
Set the graded maximal ideal $I \coloneqq (p) + R^+(\mathcal{X}, \mathcal{L})$ of $R(\mathcal{X}, \mathcal{L})$.
Since all rings and modules in question are Noetherian, the derived completion coincides with the classical completion, so there is no distinction between them. By \Cref{deriveddelta2}, we have a canonical isomorphism of rings
$$
R(\mathcal{X}, \mathcal{L})_i^{\wedge} \cong \widehat{R(\mathcal{X}, \mathcal{L})}_i.
$$
Since $X$ is a normal integral projective variety, the ring of sections $R(X,L)$ is a normal graded domain and its completion $\widehat{R(X,L)}$ is a complete Noetherian normal local domain of characteristic $p>0$, where the completion is the $R^+(X, L)$-adic completion. Also, $\widehat{R(X, L)}_i$ is a complete Noetherian normal local domain since it is isomorphic to $\widehat{R(X, L)}$ as a ring.
After taking the $I$-adic completion, we have a chain of isomorphisms:
$$
\widehat{R(\mathcal{X},\mathcal{L})}/p\widehat{R(\mathcal{X},\mathcal{L})}\cong \widehat{R(\mathcal{X},\mathcal{L})/pR(\mathcal{X},\mathcal{L})} \cong \widehat{R(X,L)}.
$$
% Since $\widehat{R(\mathcal{X},\mathcal{L})}_i$ is $I$-adically complete, its quotient modulo $p$ is $\widehat{R(X,L)}_i$ and $p \in I$, 
We can apply \cite[Theorem 8.4]{Mat86} to conclude that $\widehat{R(\mathcal{X},\mathcal{L})}_i$ is a complete Noetherian local ring flat over $W(k)$. Moreover, $\widehat{R(\mathcal{X},\mathcal{L})}_i$ is a normal domain by the deformation invariance of normality (see \cite{Mu22} and \cite{Ch82}). The flat base change $R_i$ is a finite extension of $\widehat{R(\mathcal{X}, \mathcal{L})}_i$ and thus $R_i$ is a complete Noetherian local domain. By setting $x=p^{1/p^i}$ in \Cref{normalcriterion} below, we know that $R_i$ is a normal domain.
\end{proof}

We used the following lemma above.

\begin{lemma}
\label{normalcriterion}
Let $A$ be an integral domain with a regular element $x \in A$. If $A/xA$ is reduced and $A[1/x]$ is an integrally closed domain, then $A$ is an integrally closed domain.
\end{lemma}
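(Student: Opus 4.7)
The plan is to show that an arbitrary integral element $b$ in the fraction field of $A$ already lies in $A$, by using normality of $A[1/x]$ to write $b$ as $a/x^n$ and then reducing $n$ step by step via the reducedness of $A/xA$.

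First I would take $b \in \Frac(A)$ satisfying a monic relation $b^m + c_1 b^{m-1} + \cdots + c_m = 0$ with $c_i \in A$. Since $b$ is then also integral over $A[1/x]$, and $\Frac(A) = \Frac(A[1/x])$, the hypothesis that $A[1/x]$ is integrally closed gives $b \in A[1/x]$. Hence we may write $b = a/x^n$ for some $a \in A$ and some integer $n \geq 0$.

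Next I would induct on $n$. If $n = 0$ there is nothing to prove, so assume $n \geq 1$ and that the statement holds for smaller exponents. Substituting $b = a/x^n$ into the integral relation and clearing denominators by multiplying by $x^{nm}$ yields
\[
a^m + c_1 x^n a^{m-1} + \cdots + c_m x^{nm} = 0,
\]
so $a^m \in xA$. Because $A/xA$ is reduced by hypothesis, the image of $a$ in $A/xA$ is nilpotent hence zero, so $a = x a'$ for some $a' \in A$. But then $b = a'/x^{n-1}$ is still integral over $A$, and the inductive hypothesis applies, giving $b \in A$. This establishes that $A$ is integrally closed in its fraction field.

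I do not anticipate any genuine obstacle here; the argument is essentially just the observation that a regular element cutting out a reduced quotient behaves like a uniformizer for normality descent. The only subtle point to be careful about is that no Noetherian hypothesis is needed, since both the integrality equation for $b$ and the passage $b \in A[1/x]$ are available directly from the assumptions, and the induction terminates after finitely many steps by construction.
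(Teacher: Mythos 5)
Your proof is correct and follows essentially the same route as the paper: pass to $A[1/x]$ by normality to write the integral element as $a/x^n$, clear denominators in the monic relation to get $a^m \in xA$, and use reducedness of $A/xA$ to lower the exponent (you phrase it as induction on $n$, the paper as minimality of $n$ plus contradiction, which is the same descent). No gaps.
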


\begin{proof}
Assume that $a \in \Frac(A)$ is integral over $A$. Then since $A[1/x]$ is an integrally closed domain, it follows that $a \in A[1/x]$ and we can write $a=b/x^n$ for some $b \in A$ and $n \ge 0$. Assume that $n$ is the smallest possible. If $n=0$, then there is nothing to prove. So assume that $n>0$. Let
$
f(t)\coloneqq t^m+a_1t^{m-1}+\cdots+a_{m-1}t+a_m=0
$
be a monic equation such that $f(a)=0$. Then we get
$
b^m+a_1b^{m-1}x^n+\cdots+a_{m-1}b(x^n)^{m-1}+a_m(x^n)^m=0,
$
which shows that $\overline{b}^m \in A/xA$. Since $A/xA$ is reduced, we must get $b \in xA$. So $b=xz$ for $z \in A$. However, we get
$
a=b/x^n=(yz)/x^n=z/x^{n-1},
$
contradicting the minimality of $n$. Hence $n=0$.
\end{proof}

The next result provides us a non-trivial example of a perfectoid tower $\{R_i\}_{i \ge 0}$ in which each $R_i$ is a non-Cohen--Macaulay normal domain.

\begin{theorem}
\label{delta-structure}
Assume that $X$ is a $d$-dimensional smooth projective variety defined over an algebraically closed field $k$ of characteristic $p>0$. Then the tower $R_0 \to R_1 \to R_2 \to \cdots$ constructed as in $(\ref{perfectoidtower})$ enjoys the following properties.
\begin{enumerate}
\item
The tower $ R_0 \to R_1 \to R_2 \to \cdots $ is a perfectoid tower.

\item
For each $i \ge 0$, $R_i$ is a $d+2$-dimensional Noetherian complete normal local domain with residue field $k$. $R_i \to R_{i+1}$ is a module-finite extension and $p^{1/p^i} \in R_i$.

\item
The initial ring $R\coloneqq R_0$ arises as a completion of a certain $W(k)$-flat graded normal domain which is obtained by taking the $p$-adic deformation of the section ring of a smooth projective $k$-variety $X$ such that there is an ordinary Abelian variety $A$ over $k$ and a surjective finite \'etale morphism $A \to X$ of degree prime to $p$. Moreover, if one takes $X$ to be an ordinary Abelian variety of dimension at least 2, then $R$ is not Cohen--Macaulay. In this situation, all $R_i$'s are not Cohen--Macaulay.
\end{enumerate}
\end{theorem}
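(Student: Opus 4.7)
The plan is to obtain the theorem as an application of \Cref{pTorFreePerfdTowers} to the $\delta$-ring $A\coloneqq \widehat{R(\mathcal{X},\mathcal{L})}$ (the completion at the graded maximal ideal $(p)+R^{+}(\mathcal{X},\mathcal{L})$) with base ring $V = W(k)$ and the trivially $\phi$-stable ideal $I=(0)$. The Frobenius lift $\phi_{\mathcal{X},\mathcal{L}}$ of \Cref{ModpReductionSectionRings} multiplies the graded degree by $p$, reduces to the absolute Frobenius modulo $p$, and satisfies $\phi_{\mathcal{X},\mathcal{L}}(p)=p$ (its restriction to $W(k)$ is the unique lift of Frobenius on $k$, which fixes $p$); hence it preserves the graded maximal ideal and extends continuously to a Frobenius lift on $A$. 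The remaining input hypotheses of \Cref{pTorFreePerfdTowers} are supplied by earlier results in this subsection: $A$ is a $(d+2)$-dimensional complete Noetherian local normal $W(k)$-flat (hence $p$-torsion free) domain with residue field $k$ by \Cref{RingProperties}, and $A/pA\cong\widehat{R(X,L)}$ is reduced thanks to the $F$-splittness of $R(X,L)$ obtained in \Cref{deltaring}(2). Moreover, \Cref{deriveddelta2} identifies the $i$-th iterated Frobenius-lift colimit $A_i$ with $R(\mathcal{X},\mathcal{L})_i^{\wedge}$, so that $A_i\otimes_{W(k)}V_i = R_i$ in the notation of $(\ref{perfectoidtower})$.

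With these inputs in place, part (1) follows immediately from \Cref{pTorFreePerfdTowers}(1). For part (2), the dimension, normality, completeness, and residue-field assertions on $R_i$ all follow from \Cref{RingProperties} (the module-finite extension $R(\mathcal{X},\mathcal{L})_i^{\wedge}\to R_i$ obtained by tensoring with $V_i$ preserves these invariants), while $p^{1/p^i}\in R_i$ is built into the construction. For module-finiteness of $R_i\to R_{i+1}$, the key point is module-finiteness of $\phi_{\mathcal{X},\mathcal{L}}$ on $A$: reducing modulo $p$ gives the Frobenius on $\widehat{R(X,L)}$, which is module-finite since $R(X,L)$ is $F$-finite ($X$ being finite-type over the perfect field $k$), so the Frobenius image of the maximal ideal is primary to it; combined with $\phi(p)=p$, this forces $\phi(\fm_A)A$ to be $\fm_A$-primary, making $A/\phi(\fm_A)A$ finite-length, and the complete-local Nakayama lemma then delivers module-finiteness of $\phi$ on $A$. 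This propagates through the Frobenius-lift colimits to the transition maps $A_i\to A_{i+1}$ and is preserved by tensoring with the finite $W(k)$-algebra $V_i$.

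For part (3), the existence of a smooth projective variety $X$ admitting a surjective finite étale morphism of degree prime to $p$ from an ordinary Abelian variety, and hence a quasi-canonical lift $(\mathcal{X},\widetilde{F}_X)$ entering the previous construction, is provided by \cite{IS24}. When $X$ is itself an ordinary Abelian variety of dimension $\geq 2$, \Cref{deltaring}(5) shows that $R(X,L)$ is not Cohen-Macaulay, a property that descends to the completion $\widehat{R(X,L)}\cong R_0/pR_0$; since $p$ is a nonzerodivisor on $R_0$, Cohen-Macaulayness of $R_0$ would descend to the quotient, a contradiction. For general $i$, reducedness of $R(X,L)$ combined with \Cref{purelyinseparable} yields an abstract ring isomorphism $R(X,L)_i\cong R(X,L)^{1/p^i}\cong R(X,L)$ carrying the graded maximal ideal on one side to that on the other, so $\widehat{R(X,L)_i}\cong\widehat{R(X,L)}$ is non-Cohen-Macaulay; using $V_i/p^{1/p^i}V_i=k$ together with \Cref{deriveddelta2}, one identifies $R_i/p^{1/p^i}R_i$ with $\widehat{R(X,L)_i}$, and the same regular-element argument forces $R_i$ to be non-Cohen-Macaulay. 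The only genuine technical obstacle is the compatibility of the Frobenius-lift colimit with $(p)+R^{+}$-adic completion needed for the identification $A_i\otimes_{W(k)}V_i = R_i$; once that is secured by \Cref{deriveddelta2}, the rest is an assembly of \Cref{pTorFreePerfdTowers} with the geometric inputs of \Cref{deltaring}.
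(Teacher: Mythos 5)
Your proposal is correct and, for parts (1) and (2), follows essentially the same route as the paper: apply \Cref{pTorFreePerfdTowers} to the completed section ring $\widehat{R(\mathcal{X},\mathcal{L})}$ (with the zero ideal), feed in \Cref{RingProperties} and \Cref{deltaring} for the hypotheses ($p$-torsion freeness via $W(k)$-flatness, reducedness of the mod-$p$ fiber via $F$-splitting), use \Cref{deriveddelta2} to identify the completed Frobenius-lift colimits with the layers of $(\ref{perfectoidtower})$, and get module-finiteness of the transition maps from $F$-finiteness of $R(X,L)$ plus the complete-local Nakayama lemma — your expanded primary-ideal argument is just a fleshed-out version of the paper's terse statement. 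One cosmetic remark: where you say the Frobenius lift ``extends continuously'' to the completion, the paper justifies this by citing \cite[Lemma 2.17]{BS22} on $\delta$-structures and completion; your sketch is the same idea and should be backed by that reference.

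The only genuine divergence is in the last claim of part (3), the non-Cohen--Macaulayness of $R_i$ for $i>0$. The paper argues by contradiction: if some $R_i$ were Cohen--Macaulay, then since $R_0\to R_i$ is module-finite it would be a small (maximal) Cohen--Macaulay $R_0$-algebra, contradicting \cite[Theorem 3.18]{ShTa24}. You instead argue directly: $p^{1/p^i}$ is a nonzerodivisor on the domain $R_i$, and $R_i/p^{1/p^i}R_i\cong \widehat{R(X,L)}_i\cong \widehat{R(X,L)}$ (the last isomorphism being the abstract identification $R(X,L)_i\cong R(X,L)^{1/p^i}\cong R(X,L)$ of reduced rings, which the paper itself uses in the proof of \Cref{RingProperties}), which is not Cohen--Macaulay by \Cref{deltaring}(5); since a quotient of a Cohen--Macaulay local ring by a nonzerodivisor in the maximal ideal is Cohen--Macaulay, $R_i$ cannot be Cohen--Macaulay. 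This is valid and has the advantage of being self-contained — it removes the dependence on the nonexistence theorem of \cite{ShTa24} — at the cost of carrying along the explicit identification of $R_i/p^{1/p^i}R_i$ with the completed section ring in characteristic $p$; the paper's argument is shorter granted that external input.
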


\begin{proof}
$(1)$: 
By \cite[Lemma 2.17]{BS22}, $R(\mathcal{X},\mathcal{L})_i^{\wedge}$ admits a Frobenius lift $\phi$ induced by that of $R(\mathcal{X},\mathcal{L})_i$.
By \Cref{FrobLiftInjpZariskian}, \Cref{deltaring}, and \Cref{RingProperties}, one can check that the tower $R_0 \to R_1 \to R_2 \to \cdots$ satisfies the conditions (i), (ii), (iii) in \Cref{pTorFreePerfdTowers}.
Hence it is a perfectoid tower.
%$\widehat{R(\mathcal{X}, \mathcal{L})}$ is a ($p$-adically complete) $p$-torsion-free $\delta$-ring by \Cref{deltaring} and \Cref{RingProperties}.
% Furthermore, its mod-$p$ reduction is isomorphic to the integral domain $\widehat{R(X, L)}$.
 More explicitly, the tower is isomorphic to
 \[
 R(\mathcal{X},\mathcal{L})^{\wedge} \to R(\mathcal{X},\mathcal{L})_1^{\wedge}[p^{1/p^1}] \to R(\mathcal{X},\mathcal{L})_2^{\wedge}[p^{1/p^2}] \to \cdots \to R(\mathcal{X},\mathcal{L})_i^{\wedge}[p^{1/p^i}] \to \cdots.
 \]
% arising from $(R(\mathcal{X},\mathcal{L})^{\wedge}, (p))$.
% The isomorphy (\ref{perfectoidtower}) and completeness of $R_i$ show that $\{R_i\}_{i \ge 0}$ is a perfectoid tower.

$(2)$: By \Cref{RingProperties}, it suffices to check that $R_i \to R_{i+1}$ is a module-finite extension but this is a direct consequence of the $F$-finite property of $R(X, L)$ and Nakayama's lemma.

$(3)$: Let us construct a perfectoid tower with desired properties by means of the cone of a smooth projective variety (see \cite[Example 3.6, Example 3.7, and Corollary 3.11]{K13} for some methods). Now let $X$ be an ordinary Abelian variety over an algebraically closed field $k$ of dimension $\ge 2$. By \Cref{deltaring}, the pair $(X,F_X)$ admits a canonical lifting $(\mathcal{X},\widetilde{F}_X)$. For an appropriate choice of an ample line bundle $L$ on $X$, we obtain a pair $(R(\mathcal{X},\mathcal{L}),\phi_{\mathcal{X},\mathcal{L}})$. Let us check that $R(\mathcal{X},\mathcal{L})$ is a graded normal domain that is not Cohen--Macaulay. This fact can be checked as follows. By \cite[Corollary 2, p. 121]{Mumford}, we have the non-vanishing $H^k(X,\mathcal{O}_X) \ne 0$ for any $0 \le k \le \dim X$. Then by sheaf/local cohomology correspondence \cite[Corollaire 2.1.4]{Gro1}, we see that $R(X,L)$ is not Cohen--Macaulay. Since these properties are preserved under completion and scalar extension in $(\ref{perfectoidtower})$, we can construct the perfectoid tower $\{R_i\}_{i \ge 0}$. There are several ways to check that each $R_i$ is not Cohen--Macaulay. Here is one proof. Suppose to the contrary that $R_i$ is Cohen--Macaulay for some $i>0$. Since $R=R_0 \to R_i$ is module-finite, we find that $R_i$ is a maximal Cohen--Macaulay $R$-algebra. But this is a contradiction to \cite[Theorem 3.18]{ShTa24}.
\end{proof}

\begin{remark}
For an ordinary Abelian variety $(X,F_X)$, we have a canonical lifting $(\mathcal{X},\widetilde{F}_X)$ and the section ring $R(\mathcal{X},\mathcal{L})$ has a $\delta$-ring structure. This extends to the $\delta$-ring structure on the complete local domain $\widehat{R(\mathcal{X},\mathcal{L})}$. By \Cref{LimCMSeq}, we have a lim Cohen--Macaulay sequence $\{\widehat{R(\mathcal{X},\mathcal{L})_n}\}_{n \geq 0}$. On the other hand, it follows from \cite[Theorem 3.18]{ShTa24} that $\widehat{R(\mathcal{X},\mathcal{L})}$ admits a maximal Cohen--Macaulay module $M$ (recall that $\widehat{R(\mathcal{X},\mathcal{L})}$ is not Cohen--Macaulay if $\dim X \ge 2$). Then the constant system $M=M_0=M_1=M_2=\cdots$ provides another lim Cohen--Macaulay sequence, which can be easily checked from \Cref{LimCMDef}.
\end{remark}

\begin{corollary}\label{SectionTilts}
The tilt of the tower $R=R_0 \to R_1 \to R_2 \to \cdots$ associated to $(p)$ is isomorphic to the perfect tower: 
$$
R(X, L)\llbracket T \rrbracket \to R(X, L)_{1}\llbracket T^{1/p}\rrbracket \to R(X, L)_{2}\llbracket T^{1/p^2}\rrbracket\to \cdots. 
$$
Moreover, each $R_i$ is an $F$-pure normal domain.
\end{corollary}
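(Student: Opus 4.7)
The plan is to deduce both claims from Theorem~\ref{pTorFreePerfdTowers} applied to the complete $\delta$-ring $A \coloneqq \widehat{R(\mathcal{X},\mathcal{L})}$, together with the structural facts assembled in Proposition~\ref{deltaring} and Lemma~\ref{RingProperties}.

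First I would verify the hypotheses of Theorem~\ref{pTorFreePerfdTowers} with $V = W(k)$, $A = \widehat{R(\mathcal{X},\mathcal{L})}$, and $I = 0$. Because $\phi_{\mathcal{X},\mathcal{L}}$ is a graded homomorphism of degree $p$ (Proposition~\ref{deltaring}(3)), the graded maximal ideal $(p) + R^+(\mathcal{X},\mathcal{L})$ is $\phi$-stable, so the $\delta$-structure extends uniquely to the completion $A$. By Lemma~\ref{RingProperties}, $A$ is a complete Noetherian local normal $W(k)$-algebra with residue field $k$, hence $p$-torsion free, and $A/pA \cong \widehat{R(X,L)}$ is a normal domain, in particular reduced. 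Lemma~\ref{deriveddelta2} identifies $A_i \cong R(\mathcal{X},\mathcal{L})_i^\wedge$, so the perfectoid tower $\{A_i \otimes_{W(k)} V_i\}$ produced by Theorem~\ref{pTorFreePerfdTowers}(1) coincides with the tower $\{R_i\}$ from $(\ref{perfectoidtower})$.

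Next, the tilt description follows from Theorem~\ref{pTorFreePerfdTowers}(2) directly, which gives
\[
R_i^{s.\flat} \;\cong\; \overline{A_i} \otimes_{\overline{V}} V_i^{s.\flat}.
\]
Since $\overline{V} = k$ and $\overline{A_i} = R(\mathcal{X},\mathcal{L})_i^\wedge/p \cong \widehat{R(X,L)_i}$ (the completion at the graded maximal ideal), and since $V_i^{s.\flat} \cong k\llbracket T^{1/p^i}\rrbracket$ with $T$ corresponding to the compatible system $(p,p^{1/p},p^{1/p^2},\dots)$ exactly as in Corollary~\ref{p-torFreePerfdTower}(2), the natural (graded-ideal adic) completion of the tensor product is $R(X,L)_i \llbracket T^{1/p^i}\rrbracket$. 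The transition maps, induced on the $V$-factor by $k\llbracket T^{1/p^i}\rrbracket \hookrightarrow k\llbracket T^{1/p^{i+1}}\rrbracket$ and on the $R$-factor by $R(X,L)_i \hookrightarrow R(X,L)_{i+1}$, match exactly the maps of the target perfect tower; the perfect-tower property also follows from Lemma~\ref{PropPerfdTower}(1).

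For the final sentence, normality and the domain property of each $R_i$ were already established in Lemma~\ref{RingProperties}, so only $F$-purity remains. The key observation is that reduction by the uniformizer $\pi_i \coloneqq p^{1/p^i}$ of $V_i$ yields $R_i/\pi_i R_i \cong \widehat{R(X,L)_i}$. Proposition~\ref{deltaring}(2) asserts that $R(X,L)$ is $F$-split; $F$-splitness is preserved under the purely inseparable extension $R(X,L) \hookrightarrow R(X,L)^{1/p^i} \cong R(X,L)_i$ and under completion at the graded maximal ideal (the latter because $R(X,L)$ is $F$-finite), so $\widehat{R(X,L)_i}$ is $F$-pure, giving the claim for $R_i$. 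The main obstacle I foresee is the bookkeeping of topologies: one must verify that the tensor product $\overline{A_i}\otimes_k V_i^{s.\flat}$, equipped with the inverse-limit topology making $R_i^{s.\flat}$ complete, really agrees with the formal power series ring $R(X,L)_i\llbracket T^{1/p^i}\rrbracket$ appearing in the statement, rather than a ring properly contained in or containing it; once this is done, everything else is a direct translation of the inputs.
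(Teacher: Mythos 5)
Your treatment of the tilt is the same route as the paper's: both reduce to \Cref{pTorFreePerfdTowers}(2) together with $V_i^{s.\flat}\cong k\llbracket T^{1/p^i}\rrbracket$ and the identification $A_i\cong R(\mathcal{X},\mathcal{L})_i^{\wedge}$ (note that the hypotheses of \Cref{pTorFreePerfdTowers} for this tower were already verified in the proof of \Cref{delta-structure}(1), so your first paragraph re-checks available input rather than adding anything new). Where you genuinely diverge is the \emph{Moreover} clause. Since $R_i$ has mixed characteristic, ``$R_i$ is $F$-pure'' cannot be read literally; the paper's proof makes clear that the intended objects are the characteristic-$p$ layers of the tilt, and it argues that $R(X,L)_i\llbracket T^{1/p^i}\rrbracket\cong\bigl(R(X,L)\llbracket T\rrbracket\bigr)^{1/p^i}$ because the tilt is a perfect tower, so $F$-purity passes from $R(X,L)\llbracket T\rrbracket$ (inherited from the $F$-split ring $R(X,L)$ of \Cref{deltaring}(2)) to its finite perfections. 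You instead prove $F$-purity of the special fibre $R_i/p^{1/p^i}R_i\cong\widehat{R(X,L)_i}$, using the same two transfer principles (invariance under the finite perfection $R(X,L)\hookrightarrow R(X,L)_i\cong R(X,L)^{1/p^i}$ and under completion via $F$-finiteness); this is essentially equivalent content, differing from the paper's target only by the power-series variable, but to match the statement as the paper proves it you should record the conclusion for the tilt layers rather than for $R_i$. Finally, the topological ``obstacle'' you flag---whether $\overline{A_i}\otimes_k V_i^{s.\flat}$ is literally $R(X,L)_i\llbracket T^{1/p^i}\rrbracket$---is a real looseness, but it is equally unaddressed in the paper's own proof, which passes from \Cref{pTorFreePerfdTowers}(2) to the displayed tower without comment; just beware that your proposed fix is not quite right as stated, since the graded-ideal-adic completion of $\overline{A_i}\otimes_k V_i^{s.\flat}$ is strictly larger than $R(X,L)_i\llbracket T^{1/p^i}\rrbracket$, so a clean resolution would have to identify the inverse limit defining the tilt directly.
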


\begin{proof}
The ring $V_i^{s.\flat}$ is isomorphic to $k\llbracket T^{1/p^i} \rrbracket := (k\llbracket T \rrbracket)[T^{1/p^i}]$
Hence the tilt of the tower given in \Cref{pTorFreePerfdTowers} (2) is isomorphic to
\[
R(X, L)\llbracket T \rrbracket \to R(X, L)_{1}\llbracket T^{1/p}\rrbracket \to R(X, L)_{2}\llbracket T^{1/p^2}\rrbracket\to \cdots. 
\]
Also, each $R(X,L)_i\llbracket T^{1/p^i} \rrbracket$ is isomorphic to $(R(X,L)\llbracket T \rrbracket)^{1/p^i}$ because the tilt is a perfect tower.
Since the $F$-purity carries over from the origin to its finite perfections, one readily sees that $R(X,L)_i\llbracket T^{1/p^i} \rrbracket$ is $F$-pure, as desired.
\end{proof}

\begin{remark}
\Cref{delta-structure} (1) and \Cref{SectionTilts} are also shown in Ishizuka's paper \cite{I23}.
\end{remark}

\subsection{Perfectoid towers arising from quotients by squarefree monomial ideals with $p$-torsion elements}\label{SubSubSectMonomialPerfd}

We construct an example of perfectoid towers that have $p$-torsion elements. This example is similar to the notion of Stanley--Reisner rings in mixed characteristic introduced by Olivia Strahan. See \cite{Str25}.
Before constructing such perfectoid towers, let us organize certain properties of monomial ideals with respect to regular system of parameters in regular local rings.

\begin{definition}
    Let $(R,\fm,k)$ be a regular local ring and let $\underline{x} := x_1, \ldots, x_d$ be a regular system of parameters of $R$.
    Then any product $\mathbf{x} := x_1^{a_1}\cdots x_d^{a_d} \in R$ is called a \textit{monomial} of $R$.
    Moreover, a monomial $\mathbf{x} := x_1^{a_1}\cdots x_d^{a_d}$ is called \textit{squarefree} if $a_i \in \{ 0,1\}$ for any $1 \leq i \leq d$.
\end{definition}

Fix a regular local ring and its system of parameters.
Then an ideal generated by monomials (resp. squarefree monomials) is called a \textit{monomial ideal} (resp. a \textit{squarefree monomial ideal}).

\begin{lemma}\label{SquareFreeMonRed}
    Let $(R,\fm,k)$ be a regular local ring.
    Then a squarefree monomial ideal is an intersection of prime monomial ideals.
    In particular, a monomial ideal is squarefree if and only if it is reduced. 
\end{lemma}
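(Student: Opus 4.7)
My plan is to prove the stronger claim that any squarefree monomial ideal $I = (m_1, \ldots, m_r)$ is an intersection of prime monomial ideals of the form $(x_{i_1}, \ldots, x_{i_s})$, after which the reducedness conclusion is automatic: a finite intersection of prime ideals is radical, and $R/I$ embeds into the product of the residue domains (each $R/(x_{i_1}, \ldots, x_{i_s})$ is again a regular local ring, hence a domain).

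I would proceed by induction. For the base case of a single squarefree monomial $m = x_{i_1}\cdots x_{i_k}$, the identity $(m) = (x_{i_1}) \cap \cdots \cap (x_{i_k})$ follows by induction on $k$ from the pairwise statement $(a) \cap (b) = (ab)$ for any regular sequence $a, b$: if $c = a\alpha = b\beta$, then $a\alpha \in (b)$ forces $\alpha \in (b)$ (since $a$ is a nonzerodivisor modulo $b$), so $c \in (ab)$. Every subsequence of the regular system of parameters $x_1, \ldots, x_d$ is again a regular sequence, so the argument applies to each sub-product. For the inductive step on the number of generators, I would invoke the colon-intersection identity
$$I \;=\; (I + (x_j)) \,\cap\, (I : x_j)$$
for any variable $x_j$ dividing some generator of $I$. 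Both ideals on the right are again squarefree monomial ideals in which $x_j$ appears either as a free generator or not at all: in $I + (x_j)$, the generators involving $x_j$ are absorbed into $(x_j)$; in $I : x_j$, they lose their $x_j$ factor. Iterating this reduction over all variables peels $I$ apart into the intersection of ideals $(x_{i_1}, \ldots, x_{i_s})$.

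The main obstacle is verifying the colon-intersection identity in the setting of a regular local ring, since elements of $R$ do not admit canonical monomial expansions as they would in a polynomial ring. Given $f = g + x_j h \in (I + (x_j))$ with $x_j f \in I$, one obtains $x_j^2 h \in I$ and must upgrade this to $x_j h \in I$. The cleanest route is to reduce this to the assertion that $x_j$ is a nonzerodivisor on $R/I_0$, where $I_0 \subseteq I$ is the ideal generated by the squarefree monomials in $I$ not involving $x_j$; this nonzerodivisor claim itself follows by a short induction using that regular sequences in a Noetherian local ring may be permuted, so $x_j$ remains a nonzerodivisor modulo any squarefree product of the other parameters. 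If a direct argument proves awkward, faithful flatness of $R \to \widehat{R}$ transfers the identity from the classical monomial-ideal case in the formal power series ring furnished by the Cohen structure theorem, since intersections and colons of finitely generated ideals commute with the completion.
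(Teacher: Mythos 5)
Your overall route is genuinely different from the paper's: you use the deletion--contraction identity $I=(I+(x_j))\cap(I:x_j)$ together with the principal case $(x_{i_1}\cdots x_{i_k})=\bigcap_t(x_{i_t})$, whereas the paper splits off a single generator, $I=(I,x_1)\cap(I,\mathbf{x}_1/x_1)$ with $\mathbf{x}_1=x_1\mathbf{x}_1'$, and inducts on the number of parameter occurrences among the generators. The principal-case computation via regular sequences is fine. The genuine gap is at exactly the step you flag as the main obstacle. Write $I=x_jJ+I_0$, where $J$ is generated by the $x_j$-free parts of the generators divisible by $x_j$ and $I_0$ by the generators not involving $x_j$. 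From $x_j^2h\in I$ write $x_j^2h=x_ja+b$ with $a\in J$, $b\in I_0$; if $x_j$ is a nonzerodivisor on $R/I_0$ you only obtain $x_jh-a\in I_0$, i.e. $x_jh\in J+I_0$. That is (the expected value of) $I:x_j$, not $I$, so the reduction to ``$x_j$ is a nonzerodivisor mod $I_0$'' does not close the argument. What one needs is the nonzerodivisor property modulo the larger ideal $K:=J+I_0$, applied twice: $x_j(x_jh-a)\in K$ gives $x_jh\in K$, then $h\in K$, hence $x_jh\in x_jK\subseteq x_jJ+I_0=I$. Relatedly, your assertion that $I:x_j$ is obtained by deleting the $x_j$-factors from the generators is a statement of precisely the same difficulty and is used without proof to keep the induction inside the class of squarefree monomial ideals.

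The second problem is the nonzerodivisor claim itself. Permutability of regular sequences gives that $x_j$ is a nonzerodivisor modulo a single squarefree product of the other parameters, i.e. it covers only the principal case. For a non-principal squarefree monomial ideal $K$ in the remaining parameters, ``$x_j$ is a nonzerodivisor on $R/K$'' amounts to knowing that $\Ass(R/K)$ consists of coordinate primes avoiding $x_j$ --- which is essentially the lemma for $K$ --- so as written the argument is circular. It can be repaired by restructuring the induction (for instance, induct on the set of parameters occurring: the $x_j$-free ideal $K$ involves strictly fewer parameters, so its decomposition $K=\bigcap_tP_t$ is available first, and then $R/K\hookrightarrow\prod_tR/P_t$ gives the nonzerodivisor property since $x_j\notin P_t$), but that restructuring is the missing content, not a routine detail. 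Finally, the completion fallback does not fill the hole in the paper's actual setting: in mixed characteristic $\widehat{R}$ is not a power series ring over a field, and an arbitrary regular system of parameters cannot be normalized to ``coordinates'' (any ring automorphism fixes $p$, so it cannot move $p+x_1$ to $p$); after completing you therefore face the same question for a complete regular local ring rather than the classical monomial-ideal case. Only in equicharacteristic does Cohen's theorem reduce you to honest monomials in power series variables.
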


\begin{proof}
    The second assetion directly follows from the first assetion.
    Hence, we prove the first assertion.
    Let $I$ be a squarefree monomial ideal with respect to a system of parameters $x_1,\ldots,x_d$.
    We will use the induction on the total number of $x_1,\ldots,x_d$ appearing as monomials in the generators of $I$.
    We denote this number by $G(I)$.
    The case $G(I)=1$ is obvious.
    Assume that the case $G(I)>1$ holds.
    We may assume that $x_1$ appears as an element in generators of $I$.
    Set $I=(\mathbf{x}_1,\ldots,\mathbf{x}_n,\mathbf{y}_{1},\ldots,\mathbf{y}_{m})$, where $\mathbf{x}_i = x_1x_2^{\varepsilon_{i2}}\cdots x_d^{\varepsilon_{id}}$ and $\mathbf{y}_j = x_2^{\varepsilon_{j2}}\cdots x_d^{\varepsilon_{jd}}$ for some $\varepsilon_{ij} \in \{0, 1\}$.
    Also, set $\mathbf{x}_i' := \mathbf{x}_i/x_1$.
    Then we claim that there is the decomposition $I = (I,x_1) \cap(I, \mathbf{x}_1')$.
    Indeed, the inclusion $I \subseteq (I,x_1)\cap(I,\mathbf{x}_1')$ holds obviously.
    Conversely, pick $y \in (I,x_1)\cap(I, \mathbf{x}_1')$.
    Then there exist elements $c \in I$ and $d \in R$ such that $y=c+dx_1  \in (I,\mathbf{x}_1')$.
    Since $dx_1 =y-c$ lies in $(I,\mathbf{x}_1')$, we obtain $dx_1=\sum_{i=1}^nc_i\mathbf{x}_i + \sum_{j=1}^m d_j \mathbf{y}_j +e\mathbf{x}_1'$ for some $c_i, d_j, e \in R$.
    Hence we have $x_1(d-\sum_{i=1}^n c_i\mathbf{x}'_i) =  \sum_{j=1}^m d_j \mathbf{y}_{j} +e\mathbf{x}_1' \in (\mathbf{y}_1,\ldots,\mathbf{y}_m,\mathbf{x}_1') =: J$.
    By the induction assumption, $J$ is an intersection of prime monomial ideals  generated by monomials not involving $x_1$.
    Hence we obtain $d-\sum_{i=1}^n c_i\mathbf{x}'_i \in J$, which implies $dx_1 =\sum_{i=1}^n c_i\mathbf{x}_i + \sum_{j=1}^m d'_j \mathbf{y}_{j} +e'\mathbf{x}_1 \in I$ for $d'_j, e' \in R$.
    Therefore we obtain $y \in I$.
    
    Finally, since the inequalities $G((I,x_1)), G((I,\mathbf{x}_1')) < G(I)$ hold, we can apply the induction assumption to $(I,x_1)$ and $(I, \mathbf{x}_1')$.
    Hence, $I$ is an intersection of prime monomial ideals, as desired.
\end{proof}

In this subsection, we assume the following conditions.
\begin{itemize}
    \item $A := W(k)\llbracket x_2, \ldots, x_d \rrbracket $ where $k$ is a perfect field and we denote $p$ by $x_1$.
    \item $\phi : A \to A$ is the Frobenius lift induced by the $p$-derivation $\delta : A \to A$ such that $\delta (x_i) =0$ for every $2 \leq i\leq d$.
    \item $I = (\mathbf{x}_1,\ldots,\mathbf{x}_n )$ where $\mathbf{x}_i = x_1^{\epsilon_{i1}}x_2^{\epsilon_{i2}} \cdots x_d^{\epsilon_{id}}$ and $\epsilon_{ij} \in \{0,1\}$ for any $1 \leq i \leq n$ and $1 \leq j \leq d$.
    \item $A_i$ is the direct system defined in \Cref{pdelta0}. 
    \item $I_i$ is the ideal of $A_i$ defined in \Cref{Ipdelta0}.
    \item $R_i$ is the direct system defined in \Cref{Rpdelta0}.
\end{itemize}

Since each $V_i$ appearing in (\ref{DVRtower}) is a flat $V$-algebra, we obtain the short exact sequence
\[
0 \to I_i \otimes _V V_i \to A_i \otimes_V V_i \to R_i \otimes_V \V_i \to 0.
\]
Hence $(R_i \otimes _V V_i)_{\textnormal{red}}$ is isomorphic to $A_i\otimes_V V_i/\sqrt{I_i \otimes_V V_i}$.

Moreover, the tower
\begin{equation}\label{pFreeTowers}
0 \to R = R_0 \to R_1\otimes_V V_1 \to R_2 \otimes_V V_2 \to \cdots 
\end{equation}
induces the tower
\begin{equation}\label{pFreeTowersRed}
0 \to R_{\textnormal{red}} = (R_0)_{\textnormal{red}} \to (R_1\otimes_VV _1)_{\textnormal{red}} \to (R_2\otimes_V V_2)_{\textnormal{red}} \to \cdots.
\end{equation}

Here, we set $S_i := (R_i\otimes_V V_i)_{\textnormal{red}}$.

\begin{lemma}\label{MonomialIsom}
    $S_i$ is isomorphic to $A[x_1^{1/p^i}, x_2^{1/p^i}, \ldots x_d^{1/p^i}]/(\mathbf{x}_1^{1/p^i},\ldots, \mathbf{x}_n^{1/p^i})$ where $\mathbf{x}_j$ is the monomial $x_1^{\epsilon_{j1}/p^i}x_2^{\epsilon_{j2}/p^i}\cdots x_d^{\epsilon_{jd}/p^i}$.
\end{lemma}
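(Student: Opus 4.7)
The plan is to identify $R_i$ as an explicit quotient of $A_i$ via \Cref{IsomRi}, then base-change to $B_i\coloneqq A_i\otimes_V V_i$, and finally compute the nilradical of the resulting ideal using \Cref{SquareFreeMonRed}. The main technical input is the injectivity of the Frobenius lift $\phi_R\colon R\to R$, which permits the application of \Cref{IsomRi}.

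First I would prove that $\phi_R$ is injective, equivalently that $\phi^{-1}(I)=I$ in $A$. By \Cref{SquareFreeMonRed}, one may write $I=\bigcap_{j}P_j$ as an irredundant intersection of prime monomial ideals, each $P_j$ generated by a subset of $\{p,x_2,\ldots,x_d\}$. Each quotient $A/P_j$ is a formal power series ring over either $W(k)$ (if $p\notin P_j$) or $k$ (if $p\in P_j$) in the remaining variables, hence an integral domain; the induced endomorphism sends each remaining variable to its $p$-th power and restricts to the Witt vector Frobenius on $W(k)$ (resp.\ the Frobenius on $k$), which is injective since $k$ is perfect. A leading-monomial argument then shows this endomorphism is injective on the power series ring, so $\phi^{-1}(P_j)=P_j$. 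Taking intersections yields $\phi^{-1}(I)=I$, hence $\phi_R$ is injective.

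By \Cref{IsomRi} we now have $R_i\cong A_i/\mathfrak{a}_i$ with $\mathfrak{a}_i\coloneqq\phi_{i,0}^{-1}(I)$. Under the identification $A_i\cong W(k)\llbracket x_2^{1/p^i},\ldots,x_d^{1/p^i}\rrbracket$ of \Cref{SettingCor}, $\phi_{i,0}$ is the ring isomorphism sending $x_l^{1/p^i}\mapsto x_l$ and fixing $p$, so $\mathfrak{a}_i$ is generated by $g_l\coloneqq p^{\epsilon_{l1}}x_2^{\epsilon_{l2}/p^i}\cdots x_d^{\epsilon_{ld}/p^i}$ for $l=1,\ldots,n$. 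Flatness of $V\to V_i$ then gives $R_i\otimes_V V_i\cong B_i/\mathfrak{a}_iB_i$, where $B_i\cong V_i\llbracket x_2^{1/p^i},\ldots,x_d^{1/p^i}\rrbracket$ is a regular local ring with regular system of parameters $p^{1/p^i},x_2^{1/p^i},\ldots,x_d^{1/p^i}$.

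It remains to identify $\sqrt{\mathfrak{a}_iB_i}$ with $\mathfrak{b}_i\coloneqq(\mathbf{x}_l^{1/p^i})_{l=1}^{n}$. Writing $p=(p^{1/p^i})^{p^i}$ gives $g_l=(p^{1/p^i})^{(p^i-1)\epsilon_{l1}}\cdot\mathbf{x}_l^{1/p^i}$, so $\mathfrak{a}_iB_i\subseteq\mathfrak{b}_i$; conversely $(\mathbf{x}_l^{1/p^i})^{p^i}=\mathbf{x}_l=g_l\cdot\prod_{m=2}^{d}(x_m^{1/p^i})^{(p^i-1)\epsilon_{lm}}\in\mathfrak{a}_iB_i$, so $\mathfrak{b}_i\subseteq\sqrt{\mathfrak{a}_iB_i}$. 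Since $\mathfrak{b}_i$ is a squarefree monomial ideal in $B_i$ with respect to the indicated regular system of parameters, it is radical by \Cref{SquareFreeMonRed}. Hence $\sqrt{\mathfrak{a}_iB_i}=\mathfrak{b}_i$ and $S_i=(R_i\otimes_V V_i)_{\mathrm{red}}\cong B_i/\mathfrak{b}_i$, which matches the claimed description. The main subtle point is the injectivity of $\phi_R$, since without it \Cref{IsomRi} only yields a surjection $A_i/\mathfrak{a}_i\twoheadrightarrow R_i$ and the radical computation would not close cleanly.
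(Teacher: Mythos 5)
Your proof is correct and follows essentially the same route as the paper: identify $A_i\otimes_V V_i$ with $V_i\llbracket x_2^{1/p^i},\ldots,x_d^{1/p^i}\rrbracket$, note that $I_i\otimes_V V_i$ is generated by the monomials $p^{\epsilon_{j1}}x_2^{\epsilon_{j2}/p^i}\cdots x_d^{\epsilon_{jd}/p^i}$, and determine the radical via \Cref{SquareFreeMonRed}. The only difference is that you spell out two points the paper leaves implicit, namely the injectivity of $\phi_R$ (so that \Cref{IsomRi} yields $R_i\cong A_i/I_i$ and hence the presentation of $R_i\otimes_V V_i$) and the inclusion of each $\mathbf{x}_j^{1/p^i}$ in $\sqrt{I_i\otimes_V V_i}$, both of which check out.
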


\begin{proof}
We know that $A_i\otimes_V V_i$ is isomorphic to $V_i\llbracket x^{1/p^i}_2\ldots, x^{1/p^i}_d \rrbracket$.
Hence it suffices to show that $\sqrt{I_i \otimes_V V_i}$ is isomorphic to $(\mathbf{x}_1^{1/p^i},\ldots, \mathbf{x}_n^{1/p^i})$.
We note that $I_i \otimes_V V_i$ is isomorphic to $\left((\mathbf{x}^*_1)^{1/p^i}, \ldots, (\mathbf{x}^*_n)^{1/p^i}\right)$ where $(\mathbf{x}^*_j)^{1/p^i}$ is the monomial $x_1^{\epsilon_{j1}}x_2^{\epsilon_{j2}/p^i}\cdots x_d^{\epsilon_{jd}/p^i}$.
For any $1 \leq j \leq n$, we have $(\mathbf{x}^*_j)^{1/p^i} = x_j^{\epsilon_{j1}(p^i-1)/p^i}\mathbf{x}^{1/p^i}_j \in (\mathbf{x}_1^{1/p^i},\ldots, \mathbf{x}_n^{1/p^i})$.
Moreover, the ideal $(\mathbf{x}_1^{1/p^i},\ldots, \mathbf{x}_n^{1/p^i})$ is generated by squarefree monomials with respect to the regular system of parameters $x_1^{1/p^i},\ldots, x_d^{1/p^i}$ in $A_i\otimes_V V_i$.
This implies that $\sqrt{I_i\otimes_V V_i}$ is isomorphic to $(\mathbf{x}_1^{1/p^i},\ldots, \mathbf{x}_n^{1/p^i})$ by \Cref{SquareFreeMonRed}, as desired.
\end{proof}

\begin{lemma}
\label{Monomialptor}
    By substituting elements, we may assume that $\epsilon_{j1}=1$ for $1\leq j \leq l$ and $\epsilon_{j1}=0$ for $l+1 \leq j \leq n$.
    Then $(S_i)_{p\textnormal{-tor}} = (\mathbf{y}_1^{1/p^i},\ldots,\mathbf{y}_l^{1/p^i})$, where $\mathbf{y}_j^{1/p^i} := \mathbf{x}_j^{1/p^i}/x_1^{1/p^i}$. %x_2^{\epsilon_{j2}/p^i}\cdots x_d^{\epsilon_{jd}/p^i}$ 
\end{lemma}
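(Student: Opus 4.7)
The plan is to reduce the $p$-torsion computation to a colon-ideal calculation in the regular local ring $B_i := A_i \otimes_V V_i \cong V_i\llbracket x_2^{1/p^i}, \ldots, x_d^{1/p^i}\rrbracket$, which has $x_1^{1/p^i}, x_2^{1/p^i}, \ldots, x_d^{1/p^i}$ as a regular system of parameters. Since $p = (x_1^{1/p^i})^{p^i}$ and $B_i$ is a domain, the $p$-power torsion of $S_i = B_i/J_i$ agrees with its $(x_1^{1/p^i})^\infty$-torsion, so it is enough to compute $\bigl(J_i : (x_1^{1/p^i})^\infty\bigr)/J_i$ as an ideal of $S_i$, where $J_i = (\mathbf{x}_1^{1/p^i}, \ldots, \mathbf{x}_n^{1/p^i})$ is the squarefree monomial ideal from \Cref{MonomialIsom}.

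Set $L := (\mathbf{y}_1^{1/p^i}, \ldots, \mathbf{y}_l^{1/p^i})$ and $K := (\mathbf{x}_{l+1}^{1/p^i}, \ldots, \mathbf{x}_n^{1/p^i})$ in $B_i$, so that $J_i = x_1^{1/p^i} L + K$. Because each $\mathbf{x}_j^{1/p^i}$ with $j \leq l$ is squarefree in $x_1^{1/p^i}, x_2^{1/p^i}, \ldots, x_d^{1/p^i}$, the quotient $\mathbf{y}_j^{1/p^i} = \mathbf{x}_j^{1/p^i}/x_1^{1/p^i}$ is squarefree in $x_2^{1/p^i}, \ldots, x_d^{1/p^i}$. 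Hence $L + K$ is a squarefree monomial ideal of $B_i$ whose generators do not involve $x_1^{1/p^i}$. By \Cref{SquareFreeMonRed}, $L + K$ is an intersection of monomial prime ideals, and none of these primes contains $x_1^{1/p^i}$. Therefore $x_1^{1/p^i}$ is a nonzerodivisor on $B_i/(L+K)$, which gives $\bigl((L+K) : (x_1^{1/p^i})^\infty\bigr) = L + K$.

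Next I would establish $\bigl(J_i : (x_1^{1/p^i})^\infty\bigr) = L + K$ by two inclusions. Since $J_i \subseteq L + K$, one obtains $\bigl(J_i : (x_1^{1/p^i})^\infty\bigr) \subseteq \bigl((L+K) : (x_1^{1/p^i})^\infty\bigr) = L + K$. For the reverse inclusion, the identities $x_1^{1/p^i}\mathbf{y}_j^{1/p^i} = \mathbf{x}_j^{1/p^i} \in J_i$ for $j \leq l$ together with $K \subseteq J_i$ show $L + K \subseteq (J_i : x_1^{1/p^i}) \subseteq \bigl(J_i : (x_1^{1/p^i})^\infty\bigr)$. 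Finally, since $K \subseteq J_i$, the image of $L + K$ in $S_i = B_i/J_i$ equals the image of $L$, namely the ideal $(\mathbf{y}_1^{1/p^i}, \ldots, \mathbf{y}_l^{1/p^i})$ of $S_i$. Combining these steps yields $(S_i)_{p\text{-tor}} = (\mathbf{y}_1^{1/p^i}, \ldots, \mathbf{y}_l^{1/p^i})$, as desired.

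The main obstacle I foresee is the verification that $L + K$ is a squarefree monomial ideal in the partial parameter system $x_2^{1/p^i}, \ldots, x_d^{1/p^i}$, so that \Cref{SquareFreeMonRed} applies and delivers the nonzerodivisor property of $x_1^{1/p^i}$ modulo $L + K$; once this structural point is confirmed, the remaining colon-ideal manipulation is routine.
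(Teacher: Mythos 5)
Your proposal is correct, and it takes a genuinely different route from the paper's proof. The paper argues element-wise: it picks a $p$-torsion element $f$, lifts the annihilation relation to a membership $p^m f = g_1\mathbf{x}_1^{1/p^i}+\cdots+g_n\mathbf{x}_n^{1/p^i}$ in $V_i\llbracket x_2^{1/p^i},\ldots,x_d^{1/p^i}\rrbracket$, strips off $p^{1/p^i}$ one factor at a time (using that $p^{1/p^i}$ doesn't divide the generators $\mathbf{x}_{l+1}^{1/p^i},\ldots,\mathbf{x}_n^{1/p^i}$), and finally concludes by passing to the radical. You instead recast the whole computation as the saturation $\bigl(J_i : (x_1^{1/p^i})^\infty\bigr)$, decompose $J_i = x_1^{1/p^i}L + K$, and kill the colon by showing $x_1^{1/p^i}$ is a nonzerodivisor modulo $L+K$. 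Both are sound; yours is more structural and makes the role of the decomposition $J_i = x_1^{1/p^i}L + K$ explicit, whereas the paper's element-chase is more elementary and avoids any appeal to a primary/prime decomposition. One remark on the step you flagged as the possible obstacle: rather than tracing the induction in \Cref{SquareFreeMonRed} to see that the minimal primes of $L+K$ avoid $x_1^{1/p^i}$, it is cleaner to note that $L+K$ is generated by monomials in $x_2^{1/p^i},\ldots,x_d^{1/p^i}$ alone, so $B_i/(L+K)$ is (topologically) a free $V_i$-module on the monomials outside $L+K$; since $x_1^{1/p^i}=p^{1/p^i}$ is the uniformizer of the domain $V_i$, it is automatically a nonzerodivisor on this quotient, and \Cref{SquareFreeMonRed} is not needed for this step at all. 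With that substitution your argument closes with no gap, and the remaining colon manipulations you perform are, as you say, routine.
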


\begin{proof}
The inclusion $(S_i)_{p\textnormal
-tor} \supseteq (\mathbf{y}_1^{1/p^i},\ldots,\mathbf{y}_l^{1/p^i})$ is obvious.
Pick an element $f \in (S_i)_{p\textnormal{-tor}}$.
By the isomorphism given in \Cref{MonomialIsom}, there exist elements $g_1, \ldots, g_n \in V_i\llbracket x^{1/p^i}_2\ldots, x^{1/p^i}_d \rrbracket$ such that $(pf)^n = g_1\mathbf{x}_1^{1/p^i}+\cdots +g_n\mathbf{x}_n^{1/p^i}$ in $V_i\llbracket x^{1/p^i}_2\ldots, x^{1/p^i}_d \rrbracket$ for some $n \geq 0$.
Hence we obtain the equation
$$
p^{1/p^i}(p^{{np^i-1}/p^i}f^n - g_1\mathbf{y}^{1/p^i}_1-g_2\mathbf{y}^{1/p^i}_2-\cdots-g_l\mathbf{y}^{1/p^i}_l) = g_{l+1}\mathbf{x}^{1/p^i}_{l+1} + \cdots + g_n\mathbf{x}_{n}^{1/p^i}.
$$
The left-hand side is divided by $p^{1/p^i}$. Hence $g_{l+1}, \ldots, g_n$ are divided by $p^{1/p^i}$ and we obtain the equation
$$
p^{{np^i-1}/p}f^n = g_1\mathbf{y}^{1/p^i}_1 + g_2\mathbf{y}^{1/p^i}_2 + \cdots + g_l\mathbf{y}^{1/p^i}_l + g'_{l+1}\mathbf{x}^{1/p^i}_{l+1} + \cdots + g'_n\mathbf{x}_{n}^{1/p^i}
$$
for some $g'_{l+1}, \ldots, g'_{n} \in V_i\llbracket x^{1/p^i}_2\ldots, x^{1/p^i}_d \rrbracket$.
Moreover by applying the same argument, there exist elements $g_1'', \ldots, g_n'' \in V_i\llbracket x^{1/p^i}_2\ldots, x^{1/p^i}_d \rrbracket$ such that 
$$
f^n = g''_1\mathbf{y}^{1/p^i}_1 + g''_2\mathbf{y}^{1/p^i}_2 + \cdots + g''_l\mathbf{y}^{1/p^i}_l + g''_{l+1}\mathbf{x}^{1/p^i}_{l+1} + \cdots + g''_n\mathbf{x}_{n}^{1/p^i}.
$$
and the image of $f^n = g''_1\mathbf{y}^{1/p^i}_1 + g''_2\mathbf{y}^{1/p^i}_2 + \cdots + g''_l\mathbf{y}^{1/p^i}_l$ in $S_i$ is contained in $(\mathbf{y}_1^{1/p^i},\ldots,\mathbf{y}_l^{1/p^i})$.
Hence $f$ is contained in $\sqrt{(\mathbf{y}_1^{1/p^i},\ldots,\mathbf{y}_l^{1/p^i})} = (\mathbf{y}_1^{1/p^i},\ldots,\mathbf{y}_l^{1/p^i})$, as desired.
\end{proof}

We finally provide an example of a perfectoid tower that has $p$-torsion elements.

%The following lemma is necessary for constructing perfectoid towers. The proof is a straightforward calculation, so we omit the proof.
%\begin{lemma}\label{ReducedInjection}
%Let $A/I \to B/J$ be an injective ring map where $I$ (resp. $J$) is an ideal of $A$ (resp. $B$).
%Then the induced ring map $A/\sqrt{I} \to B/\sqrt{J}$ is also injective.
%\end{lemma}

\begin{proposition}\label{pTorsionMonomialTower}
The tower (\ref{pFreeTowersRed}) is a perfectoid tower that arises from $(R,(p))$.
\end{proposition}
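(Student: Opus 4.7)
The plan is to verify axioms (a)--(g) of \Cref{DefPerfectoidTowers} directly for the tower $\{S_i\}_{i\geq 0}$, exploiting the explicit presentation
\[
S_i \cong V_i\llbracket x_2^{1/p^i},\ldots,x_d^{1/p^i}\rrbracket/(\mathbf{x}_1^{1/p^i},\ldots,\mathbf{x}_n^{1/p^i})
\]
from \Cref{MonomialIsom}, together with the $p$-torsion description in \Cref{Monomialptor}. Reducing modulo $p$, one gets
\[
S_i/pS_i \cong k[\epsilon_i]/(\epsilon_i^{p^i})\llbracket x_2^{1/p^i},\ldots,x_d^{1/p^i}\rrbracket/(\mathbf{x}_1^{1/p^i},\ldots,\mathbf{x}_n^{1/p^i}),
\]
where $\epsilon_i := p^{1/p^i}$, and this Artinian-coefficient presentation has an explicit $k$-basis of squarefree-compatible monomials that will drive all the verifications.

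The easy axioms come first. Axiom (a) holds since $R$ is already reduced by \Cref{SquareFreeMonRed}, so $S_0=(R\otimes_V V)_{\textnormal{red}}=R$. Axiom (e) holds because each $S_i$ is a quotient of a complete local ring, hence local. The transition map $\overline{s_i}\colon S_i/pS_i\to S_{i+1}/pS_{i+1}$ sends $\epsilon_i\mapsto \epsilon_{i+1}^p$ and $x_j^{1/p^i}\mapsto (x_j^{1/p^{i+1}})^p$, i.e.\ each basis monomial of the source goes to a basis monomial of the target, and one checks in the target's monomial basis that it remains nonzero iff its source did (the squarefree condition is preserved under the $p$-th power correspondence on exponents). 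This yields axiom (b). Axiom (c) is manifest since Frobenius on $S_{i+1}/pS_{i+1}$ sends every generator to a $p$-th power, which lies in the image of $\overline{s_i}$. The induced Frobenius projection $F_i$ then sends $\epsilon_{i+1}\mapsto\epsilon_i$ and $x_j^{1/p^{i+1}}\mapsto x_j^{1/p^i}$, hence is surjective, giving (d).

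For axiom (f), take $I_1:=(p^{1/p})S_1$, which is principal and satisfies $I_1^p=pS_1=I_0S_1$, yielding (f-1). For (f-2), I claim $\ker(F_i)=(p^{1/p})(S_{i+1}/pS_{i+1})$. A basis monomial $\epsilon_{i+1}^{b_1}x_2^{b_2/p^{i+1}}\cdots$ maps under $F_i$ to $\epsilon_i^{b_1}x_2^{b_2/p^i}\cdots$, which is zero in $S_i/pS_i$ exactly when either $b_1\geq p^i$ (exponent overflow in the Artinian coefficient ring) or the image lies in the defining ideal, and the latter forces the source monomial itself to be in the ideal by the analysis used for (b). Thus the kernel is generated by the elements with $\epsilon_{i+1}$-exponent $\geq p^i$, which is exactly $(\epsilon_{i+1}^{p^i})=(p^{1/p})$.

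The main obstacle is axiom (g). The first half, $p\cdot (S_i)_{p\textnormal{-tor}}=0$, follows directly from \Cref{Monomialptor}: each generator satisfies $p\cdot\mathbf{y}_j^{1/p^i}=(p^{1/p^i})^{p^i-1}(p^{1/p^i}\mathbf{y}_j^{1/p^i})=(p^{1/p^i})^{p^i-1}\mathbf{x}_j^{1/p^i}=0$. The harder part is constructing the bijection $(F_i)_{\textnormal{tor}}\colon (S_{i+1})_{p\textnormal{-tor}}\to (S_i)_{p\textnormal{-tor}}$ compatible with $F_i$ through diagram (\ref{24519N}). I would proceed by first identifying the image of $\varphi_{(p),S_i}$ inside $S_i/pS_i$ as the ideal generated by the monomials $\mathbf{y}_j^{1/p^i}$ (using the minimality of the squarefree generators $\mathbf{x}_j$ to see these monomials are nonzero modulo the defining ideal), and then showing that $F_i$ restricts to a bijection between these subideals. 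Concretely one defines $(F_i)_{\textnormal{tor}}$ on generators by $\mathbf{y}_j^{1/p^{i+1}}\mapsto \mathbf{y}_j^{1/p^i}$, extending additively and compatibly with the coefficient structure; well-definedness reduces to the same monomial-basis bookkeeping as in (b) and (f-2), and commutativity of diagram (\ref{24519N}) is immediate on generators since $F_i(\mathbf{y}_j^{1/p^{i+1}})=\mathbf{y}_j^{1/p^i}$. Bijectivity follows from the explicit parametrization of both torsion ideals by the same index set $\{j:1\leq j\leq l\}$ together with a faithfulness check across the Frobenius projection, and this $p$-torsion tracking—linking the nilpotents introduced by reducing $R_i\otimes_V V_i$ with the Frobenius projection—is the technical heart of the argument.
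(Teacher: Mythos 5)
Your handling of axioms (a)--(f) is essentially the paper's argument: the paper also works through the presentation $S_i/pS_i\cong k\llbracket t,x_2,\ldots,x_d\rrbracket/(t^{p^i},\tilde{\mathbf{x}}_1,\ldots,\tilde{\mathbf{x}}_n)$ and verifies (b)--(d) and (f) through it, and your monomial-basis computation of $\ker(F_i)$ is a harmless variant of its proof of (f-2). The genuine gap is in axiom (g), precisely the part you call the technical heart. The diagram (\ref{24519N}) forces $\varphi_{(p),S_i}\circ (F_i)_{\textnormal{tor}}=F_i\circ\varphi_{(p),S_{i+1}}$, and $F_i$ is the Frobenius projection, so it raises coefficients to their $p$-th powers. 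Hence $(F_i)_{\textnormal{tor}}$ cannot be the ``additive, coefficient-compatible'' extension of $\mathbf{y}_j^{1/p^{i+1}}\mapsto\mathbf{y}_j^{1/p^i}$ that you describe: for $c\in S_{i+1}$ the diagram demands that $c\,\mathbf{y}_j^{1/p^{i+1}}$ be sent to an element congruent to $F_i(\overline{c})\,\mathbf{y}_j^{1/p^i}$ modulo $p$, not to ``$c$ times $\mathbf{y}_j^{1/p^i}$'' (which in general is not even an element of $S_i$, since $c$ lives in $S_{i+1}$). Checking commutativity ``on generators'' therefore does not establish it for the map you need; well-definedness is also a real issue, since an element of the torsion ideal has many expressions in the generators and your ``monomial-basis bookkeeping'' does not address the relations among them; and bijectivity certainly does not follow from the two torsion ideals being ``parametrized by the same index set''.

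The paper sidesteps all of this by defining $(F_i)_{\textnormal{tor}}$ as the restriction of the $p$-th power map, and with that choice the missing pieces come essentially for free. For $x=\sum_j c_j\,\mathbf{y}_j^{1/p^{i+1}}$ one has $x^p=\sum_j c_j^p\,\mathbf{y}_j^{1/p^i}$, because every cross term is a multiple of $p$ times an element of $(S_{i+1})_{p\textnormal{-tor}}$ and so vanishes by \Cref{Monomialptor} applied at level $i+1$; commutativity of (\ref{24519N}) then holds by the very definition of the Frobenius projection; injectivity follows from the reducedness of $S_{i+1}$ (if $x^p=0$ then $x=0$); and surjectivity follows from axiom (d) together with $p\cdot(S_i)_{p\textnormal{-tor}}=0$: given $\sum_j c_j\,\mathbf{y}_j^{1/p^i}$, pick $d_j\in S_{i+1}$ with $d_j^{\,p}\equiv c_j$ modulo $p$, and then $\bigl(\sum_j d_j\,\mathbf{y}_j^{1/p^{i+1}}\bigr)^p=\sum_j c_j\,\mathbf{y}_j^{1/p^i}$. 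Your first half of (g), namely $p\cdot(S_i)_{p\textnormal{-tor}}=0$ via \Cref{Monomialptor}, is correct and is exactly what the paper uses; it is the construction and bijectivity of $(F_i)_{\textnormal{tor}}$ that need to be redone along these lines.
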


\begin{proof}
        The axioms (a) and (e) are trivial.
        Notice that we have the isomorphisms 
        \[
        S_i/pS_i \cong 
        k \llbracket \overline{p^{1/p^i}}, \overline{x_2^{1/p^i}},\ldots,\overline{x_d^{1/p^i}} \rrbracket/(\overline{\mathbf{x}_1^{1/p^i}}, \ldots, \overline{\mathbf{x}_n^{1/p^i}}) \cong 
        k\llbracket t, x_2, \ldots, x_d \rrbracket/(t^{p^i}, \tilde{\mathbf{x}}_1,\ldots, \tilde{\mathbf{x}}_n),
        \]
        where $\tilde{\mathbf{x}}_j = t^{\epsilon_{j1}}x_2^{\epsilon_{j2}}\cdots x_d^{\epsilon_{jd}}$.
        Also, we have the following ring homomorphisms
        \[
        k\llbracket t, x_2, \ldots, x_d \rrbracket/(t^{p^i}, \tilde{\mathbf{x}}_1,\ldots, \tilde{\mathbf{x}}_n)
        \cong k\llbracket t^p, x_2^p, \ldots, x_d^p \rrbracket/(t^{p^{i+1}}, \tilde{\mathbf{x}}_1^p, \ldots, \tilde{\mathbf{x}}_n^p)
        \hookrightarrow k\llbracket t, x_2,\ldots, x_d  \rrbracket/(t^{p^{i+1}}, \tilde{\mathbf{x}}_1, \ldots, \tilde{\mathbf{x}}_n).
        \]
        Since the last of the compositions is isomorphic to $S_{i+1}/pS_{i+1}$, we obtain the injectivity of $S_i/pS_i \to S_{i+1}/pS_{i+1}$. 
        In summary, we obtain the following commutative diagram:
        \[
        \xymatrix{
        S_i/pS_i \ar[r] \ar[d]^{\cong} & S_{i+1}/pS_{i+1} \ar[d]^{\cong} \\
        k\llbracket t, x_2, \ldots, x_d \rrbracket/(t^{p^i}, \tilde{\mathbf{x}}_1, \ldots, \tilde{\mathbf{x}}_n) \ar@{^(->}[r] & k\llbracket t, x_2,\ldots, x_d  \rrbracket/(t^{p^{i+1}}, \tilde{\mathbf{x}}_1, \ldots, \tilde{\mathbf{x}}_n).
        }
        \]
        Hence the axiom (b) holds.
        The axioms (c) and (d) follow from the fact that the composite map
        \[
        S_{i+1}/pS_{i+1} \xrightarrow{\cong} k\llbracket t, x_2,\ldots, x_d  \rrbracket/(t^{p^{i+1}}, \tilde{\mathbf{x}}_1, \ldots, \tilde{\mathbf{x}}_n) \twoheadrightarrow k\llbracket t, x_2, \ldots, x_d \rrbracket/(t^{p^i}, \tilde{\mathbf{x}}_1, \ldots, \tilde{\mathbf{x}}_n) \xleftarrow{\cong} S_i/pS_i,
        \]
        is the Frobenius projection $F_i : S_{i+1}/pS_{i+1} \to S_i/pS_i ; a \mapsto a^p$ where the second map is the canonical surjection.
        To show that the axiom (f) holds, it suffices to prove $\Ker (F_i) = (\overline{p^{1/p}})$.
        Pick an element $\overline{a} \in \Ker(F_i)$ such that $a^p = c_1\mathbf{x}_1^{1/p^i} + \cdots + c_d\mathbf{x}_{n}^{1/p^i}$ in $V_i \llbracket x_2^{1/p^i}, \ldots, x_d^{1/p^i} \rrbracket/pV_i \llbracket x_2^{1/p^i}, \ldots, x_d^{1/p^i} \rrbracket$.
        Here, since $(a-c_1^{1/p}\mathbf{x}_1^{1/p^{i+1}}-\cdots -c_d^{1/p}\mathbf{x}_d^{1/p^{i+1}})^p=0$ in $V_{i+1} \llbracket x_2^{1/p^{i+1}}, \ldots, x_d^{1/p^{i+1}} \rrbracket/pV_{i+1} \llbracket x_2^{1/p^{i+1}}, \ldots, x_d^{1/p^{i+1}} \rrbracket$ and the kernel of the Frobenius homomorphism on
        $
        V_{i+1} \llbracket x_2^{1/p^{i+1}}, \ldots, x_d^{1/p^{i+1}}\rrbracket /pV_{i+1} \llbracket x_2^{1/p^{i+1}}, \ldots, x_d^{1/p^{i+1}}\rrbracket% \to V_{i} \llbracket x_2^{1/p^i}, \ldots, x_d^{1/p^i} \rrbracket/pV_i \llbracket x_2^{1/p^i}, \ldots, x_d^{1/p^i} \rrbracket
        $
        is generated by $p^{1/p}$, we can express 
        \[
        a = c_1^{1/p}\mathbf{x}_1^{1/p^{i+1}} + \cdots + c_d^{1/p}\mathbf{x}_{n}^{1/p^{i+1}}+dp^{1/p}
        \]
        for some $d \in V_{i+1} \llbracket x_2^{1/p^{i+1}}, \ldots, x_d^{1/p^{i+1}}\rrbracket/pV_{i+1} \llbracket x_2^{1/p^{i+1}}, \ldots, x_d^{1/p^{i+1}}\rrbracket$.
        Hence we obtain $\overline{a} \in (p^{1/p})$ in $S_{i+1}/pS_{i+1}$, as desired.
        Finally, we show that the axiom (g) holds.
        By \Cref{Monomialptor}, $(S_i)_{p\textnormal{-tor}}$ vanishes by multiplying $p$.
        Moreover, let $(F_i)_{\textnormal{tor}} : (S_{i+1})_{p\textnormal{-tor}} \to (S_i)_{p\textnormal{-tor}}$ be the $p$-power map between $p$-torsions. 
        Then this is well-defined and surjective because it is a non-unital ring map by the form of $p$-torsions $(S_i)_{p\textnormal{-tor}}$.
        Also, since $S_i/pS_i$ is reduced and $(S_i)_{p\textnormal{-tor}} \to S_i/pS_i$ is injective, $(F_i)_{\tor}$ is injective.
        Finally, each $(F_i)_{\tor}$ commutes with the $i$-th Frobenius projection $F_i$.
        Hence the axiom (g) holds. 
\end{proof}

\begin{example}
Set $A:= \mathbb{Z}_p\llbracket x,y \rrbracket$ and $I:= (p) \cap(x,y) = (px,py)$. Since $(p)$ and $(x,y)$ are $\phi_A$-stable, so is $I$ by \Cref{IdealOperationsStable}.
This implies that the Frobenius lift $\phi_A$ on A induces the ring map $\phi_R$ on $R:=A/I$ (it is a Frobenius lift on $R$).
Let $R_0 := R \to R_1 \to R_2 \to \cdots $ be a tower constructed as in $(\ref{pFreeTowersRed})$.
Then, by \Cref{pTorsionMonomialTower}, it is perfectoid tower and each $R_i$ is isomorphic to $A[p^{1/p^i}, x^{1/p^i}, y^{1/p^i}]/(p^{1/p^i}x^{1/p^i}, p^{1/p^i}y^{1/p^i})$ by \Cref{MonomialIsom}.
\end{example}

Since each $S_i$ is not $p$-torsion-free, we can not apply \Cref{weakperfectlimCM} directly to determine whether the sequence $\{S_i \}_{i \geq 0}$ in the tower $(\ref{pFreeTowersRed})$ is a lim Cohen--Macaulay sequence or not.

%%%%%%%%%%%%%%%%%%%%%%%%%%%%%%%%%%%%%%%%%%%%%%%%%%%%%%%%%%

\end{document}